\newtheorem{proposition}{Proposition}
\newtheorem{theorem}{Theorem}
\newtheorem{lemma}{Lemma}
\newtheorem{corollary}{Corollary}
\theoremstyle{definition}
\newtheorem{definition}{Definition}
\newtheorem{notation}{Notation}
\theoremstyle{remark}
\newtheorem{remark}{Remark}
\newtheorem{example}{Example}
\newcommand{\itvoc}[1]{(#1]}
\newcommand{\itvco}[1]{[#1)}
\newcommand{\wcf}{f^{\dagger}}
\newcommand{\oldwidecheck}[1]{{#1}^{\dagger}}
\DeclarePairedDelimiterXPP{\normpa}[2]{}{\|}{\|}{_{#1,p}}{#2}
\DeclarePairedDelimiterXPP{\normf}[2]{}{\|}{\|}{_{#1}}{#2}
\DeclarePairedDelimiterXPP{\normsup}[1]{}{\|}{\|}{_{\infty}}{#1}
\DeclarePairedDelimiterXPP{\normp}[1]{}{\|}{\|}{_{p}}{#1}
\DeclarePairedDelimiterXPP{\normpp}[1]{}{\|}{\|}{_{\frac{p}{2}}}{#1}
\DeclarePairedDelimiterXPP{\generalnorm}[2]{}{\|}{\|}{_{#1}}{#2}
\DeclarePairedDelimiter{\simplenorm}{\|}{\|}
\DeclarePairedDelimiter{\eucnorm}{|}{|}
\DeclarePairedDelimiterXPP{\normhold}[2]{}{\|}{\|}{_{#1}}{#2}
\DeclarePairedDelimiterXPP{\norma}[2]{}{\|}{\|}{_{#1,1}}{#2}
\DeclarePairedDelimiterXPP{\normlip}[1]{}{\|}{\|}{_\mathrm{Lip}}{#1}
\DeclarePairedDelimiterXPP{\normlipl}[2]{}{\|}{\|}{_{\mathrm{Lip},#2}}{#1}
\DeclarePairedDelimiterX\Fam[1]\{\}{%
  \renewcommand\given{\SetSymbol[\delimsize]}#1}
\newcommand\given{\nonscript\:\delimsize\vert\nonscript\:\mathopen{}} 
\newcommand\SetSymbol[1][]{\nonscript\:#1\vert\nonscript\:\mathopen{}\allowbreak}
\DeclarePairedDelimiterX\Set[1]\{\}{%
  \renewcommand\given{\SetSymbol[\delimsize]}#1}
\newcommand{\coleq}{\mathbin{:=}}
\newcommand{\id}{\mathfrak{i}}
\newcommand{\vd}{\,\mathrm{d}}
\newcommand{\RR}{\mathbb{R}}
\newcommand{\TT}{\mathbb{T}}
\newcommand{\NN}{\mathbb{N}}
\newcommand{\bx}{\mathbf{x}}
\newcommand{\uU}{\mathrm{U}}
\newcommand{\uW}{\mathrm{W}}
\newcommand{\uT}{\mathrm{T}}
\newcommand{\uV}{\mathrm{V}}
\newcommand{\cpC}{\mathcal{C}^{p-\omega}}
\newcommand{\cpV}{\mathcal{C}^{p-\mathrm{var}}}
\newcommand{\cbV}{\mathcal{C}^{1-\mathrm{var}}}
\newcommand{\cC}{\mathcal{C}}
\newcommand{\cCb}{\mathcal{C}_{\mathrm{b}}}
\newcommand{\cT}{\mathcal{T}}
 \newcommand{\cA}{\mathcal{A}}
\newcommand{\cF}{\mathcal{F}}
\newcommand{\cG}{\mathcal{G}}
\newcommand{\cS}{\mathcal{S}}
\newcommand{\eqdef}{\mathbin{:=}}
\DeclarePairedDelimiter{\abs}{|}{|}
\newcommand{\rTT}{\mathbb{T}_+}
\newcommand{\rlTT}{\mathbb{T}_\pm}
\newcommand{\lTT}{\mathbb{T}_-}
\newcommand{\run}{\mathrm{I}}
\newcommand{\rdeux}{\mathrm{II}}
\newcommand{\rtrois}{\mathrm{III}}
\newcommand{\hf}{{\mathrlap{\protect\overrightarrow{f}}\hphantom{f}\,}}
\begin{document}

\title{The non-linear sewing lemma II: Lipschitz continuous formulation}
\author{Antoine Brault\thanks{Institut de Mathématiques de Toulouse, UMR 5219; Université de Toulouse, UPS IMT, F-31062 Toulouse Cedex 9, France}\thanks{Center for Mathematical Modeling (CNRS UMI 2807), University of Chile,      
\texttt{abrault@dim.uchile.cl}} \and Antoine Lejay\thanks{Université de Lorraine, CNRS, Inria, IECL, F-54000 Nancy, France, \texttt{antoine.lejay@univ-lorraine.fr}}}
\date{January 27, 2021}

\maketitle

\begin{abstract}
We give an unified framework to solve rough differential equations.
Based on flows, our approach unifies the former ones developed by
Davie, Friz-Victoir and Bailleul. The main idea is to build a flow from the
iterated product of an almost flow which can be viewed as a good approximation
of the solution at small time. In this second article,
we give tractable conditions under which the limit flow is Lipschitz continuous
and its links with uniqueness of solutions of rough differential equations. 
We also give perturbation formulas on almost flows
which link the former constructions.
\end{abstract}

\textbf{Keywords: } Rough differential equations; Lipschitz flows; Rough paths 

\section{Introduction}

Rough paths theory was introduced to deal with differential equations
driven by an irregular deterministic path multidimensional $x$ of the type
\begin{align}
\label{eq:intro_rde}
y_t=a+\int_0^tf(y_r)\vd x_r,
\end{align}
where $a$ is an initial condition and $f$ a smooth function.
Typically, the irregularity of $x$ is measured in $\alpha$-Hölder ($\alpha\leq 1$) or in $p$-variation ($p\geq 1$) spaces. Such an equation is called \emph{Rough Differential Equation (RDE)} \cite{lyons98a,friz14a}.

This theory was very fruitful to study stochastic equations driven by Gaussian
processes which is not covered by the Itô framework, like the fractional Brownian
motion \cite{coutin02,unterberger10}. More generally,   rough path framework
allows one to separate the probabilistic from the deterministic part in such
equation and to overcome some probabilistic conditions such as using adapted
or non-anticipative processes.

Recently,  the ideas of rough path theory were extended to stochastic
partial differential equations (SPDE) with the works of \cite{hairer14,gub15}
which have led to significant progress in the study of some SPDE. This theory
also found applications in machine learning and the recognition of the
Chinese ideograms \cite{lyons14,chevyrev}.

Since the seminal article \cite{lyons98a} by T. Lyons in 1998, several
approaches emerged to solve~\eqref{eq:intro_rde}. They are
based on two main technical arguments: fixed point theorems
\cite{lyons98a,gub04} and flow approximations
\cite{feyel,coutin-lejay1,davie05a,bailleul12a,friz}. 
In particular, the rough flow theory allows one to extend
work about stochastic flows, which has been developed in 
'80s by Le Jan-Watanabe-Kunita and others,
to a non-semimartinagle setting~\cite{bailleul15a}.

The main goal of this
article is to give a framework which unifies the approaches by flow
and pursue further investigations on their properties and their 
relations with families of solutions to \eqref{eq:intro_rde}.

A \emph{flow} is a family of maps $\{\psi_{t,s}\}$ from a Banach space to
itself such that $\psi_{t,s}\circ\psi_{s,r}=\psi_{t,r}$ for any $r\leq s\leq
t$.  Typically, the map which associates the initial condition $a$ to the
solution of \eqref{eq:intro_rde} is expected to have a flow property.  The existence of a such
flow heavily depends on the existence and uniqueness of the solution. However,
it was proved in~\cite{cardona_semiflow,cardona_semiflow2} and extended to the
rough path case in \cite{brault1} that when non-uniqueness holds, it is
possible to build a measurable flow by a selection technique.  In this article
we are interested by the construction of a Lipschitz flows.

The main idea to build the flow associated is to find a good
approximation $\phi_{t,s}$ of $\psi_{t,s}$ when $\abs{t-s}$ is small enough.
We iterate this approximation on a subdivision $\pi=\{s\leq t_i\leq\dots\leq t_j\leq t\}$ 
of $[s,t]$ by setting
\begin{align*}
\phi_{t,s}^\pi\eqdef \phi_{t,t_j}\circ\dots\circ\phi_{t_i,s}.
\end{align*}
If $\phi^\pi$  converges when the mesh of $\pi$ goes to zero, $\phi^\pi$,
the limit is necessarily a flow.

This computation is similar to the ones of numerical schemes as
Euler's methods of different order \cite{chorin}. Moreover, this idea is found among
the Trotter's formulas for bounded or unbounded linear operators which allows to compute the semi-group of the sum of two non-commutative operators only knowing the semi-groups associated to each operator \cite{engel}. This property can be used to prove the Feynman-Kac formula.

Rather than working with a particular choice for the almost flow $\phi$
as in \cite{davie05a,friz}, we give here generic conditions on $\phi$.
We generalize the multiplicative sewing lemma of \cite{feyel} and
of \cite{coutin-lejay1}, introduced to solve linear RDE to a non linear situation. 
In this way, we construct directly some flows. 
In opposite to the additive and multiplicative sewing lemma, the limit
is not necessarily unique. The approximations are assumed to be Lipschitz. 
This is not the case for the limit. 

Our framework is close to the one developed by I.~Bailleul in \cite{bailleul12a,bailleul13b}
  as we also give two conditions which ensure that the iterated composition 
    of the almost flows remain uniformly Lipschitz continuous, which we called in \cite{brault1}
    the \emph{UL condition}. One condition
    corresponds to the \emph{$\cC^1$-approximate flow property} of \cite{bailleul12a}. 
    The other ones differs from the \emph{Regularity property} of \cite{bailleul12a}.

This condition, called the \emph{4-points control}, ensure that the almost 
flow satisfies the UL Condition. In spirit, it aggregates both the spatial and the temporal regularity 
into a single condition. We then study various consequences of this condition: existence
of an inverse, unique family of solutions, convergence of the Euler scheme,~...
The \emph{4-points control} can be checked on the almost flow, which is then called
a \emph{stable almost flow}. 
In \cite{brault3}, we show using 
Stochastic Differential Equations that a Lipschitz flow may exist
while the UL condition is not satisfied. We also exhibit a condition that ensure the uniqueness of the flow 
but which is weaker than the UL one.

We also study the relationship between almost flows and 
family of solutions to~\eqref{eq:intro_rde}
in the sense of Davie \cite{davie05a} as they are two different objects.   
In particular, we show that when an almost flow is stable, 
then the family of solutions to the RDE is unique and Lipschitz continuous. 
We also relate the distance between two families of solutions 
with respect to the distance between two almost flows when one
is stable. Again, consequences of this result will be drawn in~\cite{brault3}
where we study consistency and stability of the almost flows seen as approximations.
We also deduce from the results given here 
    that solutions to RDE are generic with respect to the vector field.  
    In \cite{lejay20}, we still use these results to consider differential 
    equations driven by rough paths living on general algebraic structures.

We also give several conditions under which perturbations of almost flows, 
a convenient tool to construct numerical schemes, converge to the same
limit flow. These perturbative arguments are the key to unify expansions
that are \textit{a priori} of different nature.

Finally, we apply our framework to recover the results of 
A.M.~Davie \cite{davie05a}, P.~Friz~\& N.~Victoir \cite{friz,friz2008}
and I.~Bailleul~\cite{bailleul12a,bailleul13b} using various 
perturbation arguments. As shown in \cite{lejay20}, our framework could be applied
to deal with branched rough paths, that are high-order expansions
indiced by trees, which are studied in~\cite{cass} and shown to 
fit Bailleul's framework~\cite{bailleul18}.

\noindent\textbf{Outline.} After introducing in Section~\ref{sec:notations} the
main notations and general definitions, we recall in
Section~\ref{sec:almost_flow} the notion of almost flow which is introduced in
our previous article \cite{brault1}. In Section~\ref{sec:stable_almost_flow},
we define the $4$-point control as well as stable almost flow~$\phi$.
We prove that under these conditions, $\phi^\pi$ converges to a Lipschitz flow.  
In Section~\ref{sec:perturbation_lip}, we give conditions to modify the
almost flow $\phi$ by adding a perturbation~$\epsilon$ which retains the
convergence to a flow.  We prove that under suitable conditions,  the
inverse of the approximation $\phi$  is a good approximation
of the inverse of the flow.  The link between uniqueness of the solution of
\eqref{eq:intro_rde} and  existence of a flow is studied in
Section~\ref{sec:generalized_solution_rde}.  In Section~\ref{sec:applications},
our formalism links the former approaches based on flow \cite{davie05a, friz,
bailleul12a}.  

\section{Notations}
\label{sec:notations}

The following notations and hypotheses will be constantly used throughout all this article.

\subsection{Controls and remainders}
\label{sec:controls}
Let us fix $T>0$, a time horizon. We write $\TT\eqdef [0,T]$ as well as 
\begin{align*}
    \rTT^2&\eqdef \Set{(s,t)\in\TT^2\given s\leq t}
    \text{ and }
    \rTT^3\eqdef \Set{(r,s,t)\in\TT^3\given r\leq s\leq t},\\
    \lTT^2:&=\Set{(s,t)\in\TT^2\given s\geq t}
    \text{ and }
    \lTT^3\eqdef \Set{(r,s,t)\in\TT^3\given r\geq s\geq t}.
\end{align*}
We also set $\TT^3=\rTT^3\cup\lTT^3$.

A \emph{control} $\omega$ is a family from 
$\rTT^2\eqdef\Set{0\leq s\leq t\leq T}$ to $\RR_+$ which is \emph{super-additive},
that is 
\begin{equation*}
    \omega_{r,s}+\omega_{s,t}\leq \omega_{r,t},\ \forall (r,s,t)\in \rTT^3,
\end{equation*}
and continuous close to its diagonal with $\omega_{s,s}=0$, $s\in\TT$.
For example $\omega_{s,t}=C|t-s|$ where $C$ is a non-negative constant.

A \emph{remainder} associated to a control $\omega$ is a continuous, increasing function $\varpi:{[0, \omega_{0,T})}\to \RR_+$ such that 
for some $0<\varkappa<1$, 
\begin{equation}
    \label{eq:h4}
    2\varpi\left(\frac{\tau}{2}\right)\leq \varkappa\varpi(\tau),\ \tau>0.
\end{equation}
A typical example for $\varpi$ is $\varpi(\tau)=\tau^\theta$ for any $\theta>1$. 
More generally, if $f$ is a continuous function such that $f(\tau/2)\leq \varkappa f(\tau)$
for any $\tau\geq $ with $\varkappa<1$, then $\varpi:\tau\mapsto \tau f(\tau)$ defines a remainder.
Such a function $f$ can be constructed from $s$-convex functions for example \cite{hudzik94a}.

Let $\delta:\RR_+\to \RR_+$ be non-decreasing function with $\lim_{T\to 0} \delta_T=0$.

We fix $\gamma\in \itvoc{0,1}$. 
We also consider a continuous, increasing function $\eta: {[0, \omega_{0,T})}\to \RR_+$ such that

\begin{equation}
\label{eq:h_eta}
\eta(\omega_{s,t})\varpi(\omega_{s,t})^\gamma\leq \delta_T\varpi(\omega_{s,t}),
\ \forall (s,t)\in\rTT^2.
\end{equation}

\subsection{Function spaces}

\label{sec:partitions}

We denote by $(\uV,\abs{\cdot})$ a Banach space.
The space of continuous functions from $\uV$ to $\uV$ 
is denoted by $\cC(\uV,{\uV})$. We set $\normsup{x}\eqdef \sup_{t\in[0,T]}\abs{x_t}$.

\begin{notation} 
    We denote by $\cF^+(\uV)$ the space of families $\Set{\phi_{t,s}}_{(s,t)\in\rTT^2}$ with $\phi_{t,s}\in\cC(\uV, {\uV})$ for each $(s,t)\in\rTT^2$.
    We also set $\cF^-(\uV)$ the space of families $\Set{\phi_{s,t}}_{(s,t)\in\rTT^2}$ with $\phi_{s,t}\in\cC(\uV, {\uV})$ for each $(s,t)\in\rTT^2$ (note the reversion of the indices).
\end{notation}

We now consider a partition $\pi=\Set{t_0\leq \dots \leq t_n}$ of $[0,T]$ with  a mesh denoted by~$\abs{\pi}$. 

\begin{notation}[Iterated products]
    \label{not:iteratedproducts}
For $\phi\in\cF^+(\uV)$, we write 
\begin{equation*}
    \phi_{t,s}^\pi\eqdef \phi_{t,t_j}\circ \phi_{t_j,t_{j-1}}
    \circ\dotsb\circ \phi_{t_{i+1},t_i}\circ \phi_{t_i,s},
\end{equation*}
where $[t_i,t_j]$ is the biggest interval of such kind contained in $[s,t]$.
We say that $\phi_{t,s}^\pi$ is the \emph{iterated product} of $\phi$ on a subdivision $\pi$.
If no such interval exists, then $\phi^\pi_{t,s}=\phi_{t,s}$. 
\end{notation}

For $\phi\in\cF^+(\uV)$, we define similarly 
\begin{equation*}
\phi_{s,t}^\pi\eqdef \phi_{s,t_1}\circ \phi_{t_1,t_2} \circ\dotsb\circ \phi_{t_{j-1},t_j}\circ \phi_{t_j,t}.
\end{equation*}

For any partition $\pi$, $\phi^\pi\in\cF^\pm(\uV)$ when $\phi\in\cF^\pm(\uV)$.
A trivial but important remark is that 
from the very construction, 
\begin{equation*}
    \phi_{t,s}^\pi=\phi_{t,r}^\pi\circ \phi^\pi_{r,s}\text{ for any }r\in\pi.
\end{equation*}
In particular, $\Set{\phi^\pi_{t,s}}_{(s,t)\in\rlTT^2,\ s,t\in\pi}$ enjoys a (semi-)flow 
property (Definition~\ref{def:flow}). A natural question is then to study the limit of $\phi^\pi$ as the mesh of $\pi$
decreases to $0$.

Finally, for any $(r,s,t)\in\rlTT^3$ we write $\phi_{t,s,r}\eqdef \phi_{t,s}\circ\phi_{s,r}-\phi_{t,r}$.

\begin{notation}
    \label{not:4}
    We extend the norm $\abs{\cdot}$ on $\cF^\pm(\uV)$ by 
    \begin{equation*}
	\normf{\varpi}{\phi}\eqdef\sup_{\substack{(s,t)\in\rlTT^2\\ s\not=t}}
	    \frac{\normsup{\phi_{t,s}}}{\varpi(\omega_{s,t})},
    \end{equation*}
where $\omega$, $\varpi$ are defined in Section~\ref{sec:controls}. 
Possibly, $\normf{\varpi}{\phi}=\infty$. Actually, this norm 
is mainly used to consider the distance between two elements
of $\cF^\pm(\uV)$.
With this norm, $(\cF^\pm(\uV),\normf{\varpi}{\cdot})$ is a Banach space. 
\end{notation} 
\begin{definition} 
\label{def_galaxy}
    We define the equivalence relation $\sim$ on $\cF^\pm(\uV)$ by 
    $\phi\sim\psi$ if and only if there exists a constant $C$ such that 
    \begin{equation*}
  \normsup{\phi_{t,s}-\psi_{t,s}}\leq C\varpi(\omega_{s,t}),
    \  \forall (s,t)\in\TT^2.    
    \end{equation*}
    In other words, $\phi\sim\psi$ if and only if $\normf{\varpi}{\phi-\psi}<+\infty$. 
    Each quotient class of $\cF^\pm(\uV)/\sim$ is called a \emph{galaxy}, which 
    contains elements of $\cF^\pm(\uV)$ which are at finite distance from each others.
\end{definition}

\begin{notation}[Lipschitz semi-norm]
    The Lipschitz semi-norm of a function $f$ from a Banach space 
    $(\uV,|\cdot|)$ to another Banach space $(\uW,|\cdot|')$ is 
    \begin{equation*}
	\normlip{f}\eqdef\sup_{\substack{a,b\in\uV,\\a\neq b}} \frac{|f(a)-f(b)|'}{|a-b|},
    \end{equation*}
    whenever this quantity is finite. And if $A\subset \uV$ is a
    non-empty subset of $\uV$, we say that $f$ is  Lipschitz
    continuous on $A$ when
    \begin{align*}
    \normlipl{f}{A}\eqdef\sup_{\substack{a,b\in A,\\a\neq b}} \frac{|f(a)-f(b)|'}{|a-b|}<+\infty.
    \end{align*}
\end{notation}

\begin{notation}[Hölder spaces]
For $\gamma\in (0,1)$,  an integer $r$ and two Banach spaces $\uV_1$, $\uV_2$, we denote by $\cC^{r+\gamma}_b({\uV_1,\uV_2})$ the space of bounded continuous functions from $\uV_1$ to $\uV_2$ with bounded (Fréchet) derivatives up to order $r$ and a $r$ order derivative which is $\gamma$-Hölder continuous. 
We also denote by $L(\uV_1,\uV_2)$ the set of continuous linear maps from $\uV_1$ to $\uV_2$.
\end{notation}

\section{Almost flow and Uniform Lipschitz condition}
\label{sec:almost_flow}
In this section, we recall some notions and results introduced in \cite{brault1},
which are useful in next sections. As we are working on Banach spaces
instead of metric spaces, 
we have a slightly stronger notion of almost flow than in \cite{brault1}. 

We denote by $\id$ the identity map from $\uV$ to $\uV$.

\begin{definition}[Almost flow]
\label{def:almost_flow}
An element $\phi\in\cF^+(\uV)$ is an \emph{almost flow} if for any $T>0$ and 
any $(r,s,t)\in\rTT^3$, $a,b\in\uV$,
\begin{gather}
    \label{eq:h0}
    \phi_{t,t}=\id,\\
    \label{eq:h1}
    \normsup{\phi_{t,s}-\id}\leq \delta_T,\\
    \label{eq:h2}
    \abs{\phi_{t,s}(b)-\phi_{t,s}(a)}\leq (1+\delta_T)\abs{b-a}+\eta(\omega_{s,t})\abs{b-a}^\gamma,\\
    \label{eq:h3}
    \normsup{\phi_{t,s,r}}\leq M\varpi(\omega_{r,t}),
\end{gather}
where $M\geq 0$ and $\phi_{t,s,r}\eqdef \phi_{t,s}\circ\phi_{s,r}-\phi_{t,r}$. If we replace $(r,s,t)\in\rTT^3$ by $(r,s,t)\in\lTT^3$, we say that $\phi$ is a \emph{reverse almost flow}.
\end{definition}
\begin{definition}[Semi-flow and Flow] 
\label{def:flow}
A \emph{semi-flow} $\psi$ is a family of functions $(\psi_{t,s})_{(s,t)\in\rTT^2}$ from $\uV$ to $\uV$ such that
    $\psi_{t,t}=\id$ and 
\begin{align}
\label{eq:def_flow}
\psi_{t,s}\circ\psi_{s,r}=\psi_{t,r}
\end{align}    
     for any $a\in\uV$ and $(r,s,t)\in\rTT^3$. It is a \emph{flow} if
     each $\psi_{t,s}$ is invertible with an inverse equal to $\psi_{s,t}$ 
     for any $(s,t)\in\rTT^2$ and 
     \eqref{eq:def_flow} holds for any $(r,s,t)\in\TT^3\eqdef \rTT^3\cup\lTT^3$.
\end{definition}

\begin{remark}
    The inverse of $\psi_{t,s}$ is $\psi_{s,t}$ for $(s,t)\in\TT^2$.
\end{remark}
\begin{theorem}[{\cite{brault1}}]
    \label{thm:1}
    Let $\phi$ be an almost flow (Definition~\ref{def:almost_flow})
    with $M\geq 0$ and $\delta_T$, $\kappa$ defined in
    Section~\ref{sec:controls} Then there exists a time horizon $T$
    small enough and a constant
    $L\leq 2M/(1-(1+\delta_T)\kappa-\delta_T)$ such that
    \begin{equation}
	\label{eq:sewing_sup}
	\normsup{\phi^\pi_{t,s}-\phi_{t,s}}\leq L\varpi(\omega_{s,t})
    \end{equation}
    for any $(s,t)\in\rTT^2$ and any  partition $\pi$ of $\TT$.
\end{theorem}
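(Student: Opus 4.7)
My plan is to proceed by induction on $n=|\pi|$. I would define
\[
L_n \eqdef \sup\Set{\normsup{\phi^\pi_{t,s}-\phi_{t,s}}/\varpi(\omega_{s,t}) \given (s,t)\in\rTT^2,\ s\neq t,\ |\pi|\leq n}
\]
and aim to show $L_n$ is bounded uniformly in $n$ by the asserted constant. The base cases are immediate: $L_2=0$ since $\phi^\pi_{t,s}=\phi_{t,s}$ when $\pi=\{s,t\}$, and $L_3\leq M$ because for $\pi=\{s,u,t\}$ one has $\phi^\pi_{t,s}-\phi_{t,s}=\phi_{t,u,s}$, which is controlled by \eqref{eq:h3}.

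For the inductive step on a partition $\pi$ with $n+1\geq 4$ points, I would invoke the super-additivity of $\omega$ to pick an interior point $u\in\pi$ that splits $\pi$ into two subpartitions $\pi_1,\pi_2$ (sharing the endpoint $u$), balancing $\omega_{s,u}$ and $\omega_{u,t}$ as close to $\omega_{s,t}/2$ as the discrete structure allows; both $|\pi_i|\leq n$. The key algebraic identity
\[
\phi^\pi_{t,s}-\phi_{t,s} = \bigl[\phi^{\pi_2}_{t,u}-\phi_{t,u}\bigr]\circ \phi^{\pi_1}_{u,s} + \bigl[\phi_{t,u}\circ\phi^{\pi_1}_{u,s}-\phi_{t,u}\circ\phi_{u,s}\bigr] + \phi_{t,u,s}
\]
then splits the task into three controllable pieces. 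The first is bounded in sup-norm by $L_n\varpi(\omega_{u,t})$ directly from the inductive hypothesis, and crucially, because the bound is uniform in the argument, no Lipschitz control on the outer composition $\phi^{\pi_1}_{u,s}$ is required. The second, via \eqref{eq:h2} applied to $\phi_{t,u}$ at the pair $\bigl(\phi^{\pi_1}_{u,s}(a),\phi_{u,s}(a)\bigr)$ whose distance is at most $L_n\varpi(\omega_{s,u})$, is bounded by $(1+\delta_T)L_n\varpi(\omega_{s,u}) + \eta(\omega_{u,t})(L_n\varpi(\omega_{s,u}))^\gamma$. The third is $\leq M\varpi(\omega_{s,t})$ by \eqref{eq:h3}.

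Assembling these, using \eqref{eq:h4} to collapse $\varpi(\omega_{s,u})+\varpi(\omega_{u,t})$ into $\kappa\varpi(\omega_{s,t})$ and \eqref{eq:h_eta} (together with $L_n^\gamma\leq L_n\vee 1$) to absorb the non-linear Hölder tail into $\delta_T L_n\varpi(\omega_{s,t})$, I would obtain a recursion of the form
\[
L_{n+1}\leq \bigl[(1+\delta_T)\kappa+\delta_T\bigr]L_n + 2M,
\]
the extra factor $2$ accounting for the slack from the fact that a discrete partition cannot in general be bisected exactly in $\omega$-mass. Choosing $T$ small enough that $\alpha\eqdef(1+\delta_T)\kappa+\delta_T<1$, this affine recursion has fixed point $2M/(1-\alpha)$, which bounds $\sup_n L_n$ and is exactly the constant in the statement. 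The two subtleties I anticipate are the discrete balancing at $u$ (which forces the factor $2$) and the correct absorption of the Hölder contribution from \eqref{eq:h2} via the scaling \eqref{eq:h_eta}; everything else is bookkeeping.
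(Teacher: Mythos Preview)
The paper does not prove this theorem here; it is quoted from the companion article \cite{brault1}. That said, the inductive mechanism you describe is exactly the one the authors package as the Davie lemma in the appendix (Lemma~\ref{lem:davie}), so in spirit your argument coincides with theirs: set $U_{s,t}\eqdef\normsup{\phi^\pi_{t,s}-\phi_{t,s}}$, derive the ``almost-additive'' inequality $U_{r,t}\leq U_{s,t}+(1+\delta_T)U_{r,s}+\eta(\omega_{s,t})U_{r,s}^\gamma+M\varpi(\omega_{r,t})$ from your three-term identity and \eqref{eq:h2}--\eqref{eq:h3}, absorb the H\"older tail via \eqref{eq:h_eta}, and iterate.

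There is, however, one genuine gap in your sketch: the two-way split at a single interior point $u$ does not permit you to invoke \eqref{eq:h4}. That inequality needs both pieces to carry $\omega$-mass at most $\omega_{s,t}/2$, and on a discrete partition no single point $u$ need achieve this; one side may carry essentially all the mass, so $\varpi(\omega_{s,u})+\varpi(\omega_{u,t})$ is only bounded by $2\varpi(\omega_{s,t})$, not by $\varkappa\varpi(\omega_{s,t})$. A bare ``factor $2$'' does not rescue the contraction. The fix, which is precisely what the proof of Lemma~\ref{lem:davie} does, is a three-way split: choose two \emph{successive} partition points $u\leq u'$ with $\omega_{s,u}\leq\omega_{s,t}/2$ and $\omega_{u',t}\leq\omega_{s,t}/2$, apply your recursion twice (at $u$ then at $u'$), and note that the middle piece contributes nothing since $\phi^\pi_{u',u}=\phi_{u',u}$. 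Now \eqref{eq:h4} applies to the two outer pieces, and the recursion closes with exactly the constant $2M/(1-(1+\delta_T)\varkappa-\delta_T)$ stated. With this correction your proof is complete.
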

\begin{definition}[Condition UL]
\label{prop:ul_condition}
An almost flow $\phi$ such that $\normlip{\phi^\pi_{s,t}}\leq 1+\delta_T$
for any $(s,t)\in\rTT^2$ whatever the partition $\pi$ is said to satisfy 
the \emph{uniform Lipschitz} (UL) condition.
\end{definition}

We give a sufficient condition on an almost flow to get a Lipschitz flow in a galaxy.

\begin{proposition}
    Let $\phi$ be an almost flow which satisfies the condition UL.
 Then there exists a Lipschitz flow $\psi$ with $\normlip{\psi_{t,s}}\leq 1+\delta_T$ 
 for any $(s,t)\in\rTT^2$ 
 such that $\phi^\pi_{t,s}(a)$ converges to $\psi_{t,s}(a)$
 for any $a\in\uV$ and any $(s,t)\in\rTT^2$.
\end{proposition}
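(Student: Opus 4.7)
The plan is to define $\psi_{t,s}(a)\eqdef \lim_{|\pi|\to 0}\phi^\pi_{t,s}(a)$ pointwise in $a$, with the bulk of the work being to show this limit exists. Fix $(s,t)\in\rTT^2$ and consider two partitions $\pi\subset\pi'$ of $[s,t]$ with $\pi=\{s=t_0<\dots<t_n=t\}$. Introduce the telescoping interpolants $\xi_i\eqdef\phi^{\pi'}_{t,t_i}\circ\phi^\pi_{t_i,s}$, so that $\xi_0=\phi^{\pi'}_{t,s}$ and $\xi_n=\phi^\pi_{t,s}$. Using the flow property of each iterated product at the points of its own partition, each increment rewrites as
\begin{equation*}
\xi_{i+1}-\xi_i = \phi^{\pi'}_{t,t_{i+1}}\circ\phi_{t_{i+1},t_i}\circ\phi^\pi_{t_i,s} - \phi^{\pi'}_{t,t_{i+1}}\circ\phi^{\pi'}_{t_{i+1},t_i}\circ\phi^\pi_{t_i,s}.
\end{equation*}
The UL hypothesis gives $\normlip{\phi^{\pi'}_{t,t_{i+1}}}\leq 1+\delta_T$, and Theorem~\ref{thm:1} applied on $[t_i,t_{i+1}]$ to the subpartition $\pi'\cap[t_i,t_{i+1}]$ gives $\normsup{\phi^{\pi'}_{t_{i+1},t_i}-\phi_{t_{i+1},t_i}}\leq L\varpi(\omega_{t_i,t_{i+1}})$. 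Summing yields
\begin{equation*}
\normsup{\phi^{\pi'}_{t,s}-\phi^\pi_{t,s}}\leq (1+\delta_T)L\sum_{i=0}^{n-1}\varpi(\omega_{t_i,t_{i+1}}).
\end{equation*}

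Iterating the remainder condition $2\varpi(\delta/2)\leq\varkappa\varpi(\delta)$ shows that $\varpi(\delta)/\delta\to 0$ as $\delta\to 0$, so by super-additivity of $\omega$ the right-hand side is bounded by $(1+\delta_T)L\,\omega_{s,t}\sup_{\delta\leq\max_i\omega_{t_i,t_{i+1}}}\varpi(\delta)/\delta$, which vanishes as $|\pi|\to 0$. Applying this bound to two arbitrary partitions of small mesh through their common refinement and using the triangle inequality shows $\{\phi^\pi_{t,s}(a)\}$ is Cauchy in $\uV$ as $|\pi|\to 0$, so the limit $\psi_{t,s}(a)$ exists and is independent of the sequence of partitions chosen.

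The remaining properties are routine. The bound $\normlip{\psi_{t,s}}\leq 1+\delta_T$ follows because pointwise limits of uniformly Lipschitz maps retain the Lipschitz constant, and $\psi_{t,t}=\id$ is immediate from $\phi_{t,t}=\id$. For the composition $\psi_{t,s}\circ\psi_{s,r}=\psi_{t,r}$ on $(r,s,t)\in\rTT^3$, take partitions $\pi_n$ of $[r,t]$ containing $s$ with mesh tending to zero, write $\phi^{\pi_n}_{t,r}=\phi^{\pi_n}_{t,s}\circ\phi^{\pi_n}_{s,r}$, and pass to the limit; the UL bound controls the moving inner argument $\phi^{\pi_n}_{s,r}(a)\to\psi_{s,r}(a)$ when composed with the outer map $\phi^{\pi_n}_{t,s}$, and the pointwise convergence of $\phi^{\pi_n}_{t,s}$ at the point $\psi_{s,r}(a)$ gives the desired identity.

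The main obstacle is the Cauchy estimate. The telescope has to be organised so that the many composed factors contribute only one overall Lipschitz constant rather than compounding across $n$ steps, and the quantitative superlinear decay $\varpi(\delta)=\petito(\delta)$ extracted from the contraction property \eqref{eq:h4} is essential to ensure that the Riemann-type sum $\sum_i\varpi(\omega_{t_i,t_{i+1}})$ genuinely vanishes with the mesh rather than merely staying bounded.
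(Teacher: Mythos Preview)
Your argument is correct. The paper itself does not give a proof of this proposition here: Section~\ref{sec:almost_flow} is explicitly a summary of results from the companion paper~\cite{brault1}, and the statement is recalled without proof. So there is no ``paper's own proof'' to compare against in this document.

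That said, your approach is the natural one and matches the spirit of the surrounding arguments (compare the telescoping in the proof of Proposition~\ref{prop:rate}). The two points worth recording are exactly the ones you single out in the last paragraph. First, the UL condition is used precisely so that the outer factor $\phi^{\pi'}_{t,t_{i+1}}$ contributes a single factor $1+\delta_T$ rather than a product of $n$ such factors; this is what distinguishes the situation from the general almost-flow setting of Theorem~\ref{thm:1}. Second, the passage from~\eqref{eq:h4} to $\varpi(\delta)=o(\delta)$ is what makes the Riemann-type sum $\sum_i \varpi(\omega_{t_i,t_{i+1}})$ vanish; your dyadic iteration argument for this is fine (monotonicity of $\varpi$ bridges between dyadic scales).

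One terminological remark: the statement says ``Lipschitz flow'', but what you actually establish is the semi-flow property $\psi_{t,s}\circ\psi_{s,r}=\psi_{t,r}$ for $(r,s,t)\in\rTT^3$, together with $\psi_{t,t}=\id$ and the Lipschitz bound. You do not prove invertibility, which Definition~\ref{def:flow} formally requires for the word ``flow''. This is consistent with how the paper uses the term informally in this section; invertibility is treated separately (Proposition~\ref{prop:inversion_flow}) under additional hypotheses, so no further work is expected here.
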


On the other hand, there could be at most one flow in a galaxy
if one is Lipschitz.

\begin{proposition}
\label{prop:unique_flow}
Assume that there is a Lipschitz flow $\psi$ in a galaxy $G$.
Then $\psi$ is the unique flow in~$G$.
Moreover, for any almost flow $\phi\sim\psi$, $\phi^\pi_{s,t}(a)$
converges to $\psi_{s,t}(a)$ for any $(s,t)\in\rTT^2$ and $a\in\uV$.
\end{proposition}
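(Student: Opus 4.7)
The plan is to derive both assertions from a single telescoping identity, taking advantage of the Lipschitz continuity and flow identity of $\psi$ to move small-time discrepancies across compositions without amplifying them.

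For a partition $\pi = \{s = t_0 \leq \dots \leq t_n = t\}$ I would introduce the hybrid family $g_k \eqdef \psi_{t, t_k} \circ \chi_{t_k, s}$, where $\chi$ stands either for the second flow $\psi'$ (uniqueness part) or for the partial iterated product of $\phi$ along $\pi \cap [s, t_k]$ (convergence part). In both cases $g_0 = \psi_{t,s}$ and $g_n$ is the object to compare with $\psi_{t,s}$. Using the flow identity $\psi_{t, t_{k-1}} = \psi_{t, t_k} \circ \psi_{t_k, t_{k-1}}$ on one side and the compositional identity of $\chi$ on the other (either $\psi'_{t_k, s} = \psi'_{t_k, t_{k-1}} \circ \psi'_{t_{k-1}, s}$ or $\phi^\pi_{t_k, s} = \phi_{t_k, t_{k-1}} \circ \phi^\pi_{t_{k-1}, s}$), I would rewrite $g_k - g_{k-1}$ as $\psi_{t, t_k}$ applied to two inputs whose supremum difference is exactly $\normsup{\chi_{t_k, t_{k-1}} - \psi_{t_k, t_{k-1}}}$. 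The Lipschitz bound $\normlip{\psi_{t, t_k}} \leq 1 + \delta_T$ together with the galaxy estimate $\normsup{\chi_{u,v} - \psi_{u,v}} \leq C\varpi(\omega_{v,u})$ then yields
\begin{equation*}
\normsup{g_k - g_{k-1}} \leq (1+\delta_T)\, C\, \varpi(\omega_{t_{k-1}, t_k}).
\end{equation*}

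Telescoping and the triangle inequality give $\normsup{g_n - g_0} \leq (1+\delta_T)\, C \sum_{k=1}^n \varpi(\omega_{t_{k-1}, t_k})$. It remains to observe that this sum vanishes as $\abs{\pi} \to 0$, which is the standard consequence of the condition \eqref{eq:h4}: iteration of $2\varpi(\delta/2) \leq \varkappa \varpi(\delta)$ yields $\varpi(\delta) \leq (\delta/\Omega)^\alpha \varpi(\Omega)$ for some $\alpha > 1$ when $\delta \leq \Omega$, and super-additivity of $\omega$ then gives
\begin{equation*}
\sum_{k=1}^n \varpi(\omega_{t_{k-1}, t_k}) \leq \frac{\varpi(\omega_{s,t})}{\omega_{s,t}^\alpha}\, \omega_{s,t} \cdot \Bigl(\max_k \omega_{t_{k-1}, t_k}\Bigr)^{\alpha - 1} \xrightarrow[\abs{\pi}\to 0]{} 0.
\end{equation*}
For the uniqueness part the left-hand side does not depend on $\pi$, so one concludes $\psi_{t,s} = \psi'_{t,s}$; for the convergence part one obtains $\phi^\pi_{t,s}(a) \to \psi_{t,s}(a)$ for every $a$.

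The only real subtlety I anticipate is the placement of $\psi$ on the outer side of every composition: this is what keeps the Lipschitz factor uniformly bounded along $\pi$ and prevents the telescoping errors from compounding (a naïve hybrid with $\phi$ on the outside would require bounds on $\normlip{\phi^\pi}$, which are not assumed). Everything else is routine manipulation of the flow and galaxy definitions.
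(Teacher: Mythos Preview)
Your argument is correct, and it is essentially the same device the paper uses. The paper does not reprove Proposition~\ref{prop:unique_flow} here (it is recalled from the companion article \cite{brault1}), but the identical telescoping hybrid $z_k=\psi_{t,t_k}\circ\phi^\pi_{t_k,s}$ with the outer Lipschitz flow absorbing the increments appears verbatim in the paper's proof of Proposition~\ref{prop:rate}, so your approach matches the authors' own.
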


We then complete the results of \cite{brault1} with 
the following ones. 

\begin{proposition}
    Let $\phi$ be an almost flow which satisfies the condition UL.
    Then $\phi^\pi$ is an almost flow for any partition $\pi$.
\end{proposition}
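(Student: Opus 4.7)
My plan is to verify each of the four almost-flow axioms \eqref{eq:h0}--\eqref{eq:h3} for $\phi^\pi$, allowing slightly enlarged constants $\delta'_T$ and $M'$ that still satisfy the structural requirements of Section~\ref{sec:controls}. Axiom \eqref{eq:h0} is immediate: when $t=s$ no admissible subinterval appears in Notation~\ref{not:iteratedproducts}, so $\phi^\pi_{t,t}=\phi_{t,t}=\id$. Axiom \eqref{eq:h2} is essentially free from the UL condition: $\normlip{\phi^\pi_{t,s}}\le 1+\delta_T$ gives $|\phi^\pi_{t,s}(b)-\phi^\pi_{t,s}(a)|\le(1+\delta_T)|b-a|$, which is stronger than what is required.

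For \eqref{eq:h1}, I would combine Theorem~\ref{thm:1}, giving $\normsup{\phi^\pi_{t,s}-\phi_{t,s}}\le L\varpi(\omega_{s,t})$, with \eqref{eq:h1} applied to $\phi$ itself. Setting $\delta'_T\eqdef \delta_T+L\varpi(\omega_{0,T})$ yields $\normsup{\phi^\pi_{t,s}-\id}\le \delta'_T$; since $\varpi(0)=0$ is forced by \eqref{eq:h4} and $\varpi$ is continuous with $\omega_{0,T}\to 0$ as $T\to 0$, $\delta'_T$ is non-decreasing and tends to $0$, so it is an admissible replacement for $\delta_T$ throughout.

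The main step is \eqref{eq:h3}, namely bounding $\phi^\pi_{t,s,r}=\phi^\pi_{t,s}\circ\phi^\pi_{s,r}-\phi^\pi_{t,r}$ by a multiple of $\varpi(\omega_{r,t})$ for an arbitrary triple $(r,s,t)\in\rTT^3$; note that $r,s$ need not belong to $\pi$ so the semi-flow identity for $\phi^\pi$ is unavailable in general. I would use the telescoping decomposition
\begin{equation*}
\phi^\pi_{t,s}\circ\phi^\pi_{s,r}-\phi^\pi_{t,r}
=(\phi^\pi_{t,s}-\phi_{t,s})\circ\phi^\pi_{s,r}+\bigl(\phi_{t,s}\circ\phi^\pi_{s,r}-\phi_{t,s}\circ\phi_{s,r}\bigr)+\phi_{t,s,r}+(\phi_{t,r}-\phi^\pi_{t,r}),
\end{equation*}
and estimate the four pieces separately: the first and the last by Theorem~\ref{thm:1}; the third by axiom \eqref{eq:h3} applied to $\phi$; and the central piece pointwise via \eqref{eq:h2} for $\phi_{t,s}$ composed with the sewing bound on $\phi^\pi_{s,r}-\phi_{s,r}$, which yields $(1+\delta_T)L\varpi(\omega_{r,s})+\eta(\omega_{s,t})L^\gamma\varpi(\omega_{r,s})^\gamma$.

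The only delicate point is taming the Hölder contribution in that central piece: one has to exploit \eqref{eq:h_eta} against the power $\varpi(\omega_{r,s})^\gamma$ rather than $\varpi(\omega_{r,s})$ itself, which is achieved by using monotonicity $\omega_{r,s}\le\omega_{r,t}$ and $\omega_{s,t}\le\omega_{r,t}$ to replace the two factors by $\eta(\omega_{r,t})\varpi(\omega_{r,t})^\gamma\le\delta_T\varpi(\omega_{r,t})$. Summing the four bounds gives $\normsup{\phi^\pi_{t,s,r}}\le M'\varpi(\omega_{r,t})$ with $M'$ an explicit constant depending on $L,M,\delta_T,\varkappa,\gamma$; combined with the verifications above this shows that $\phi^\pi$ satisfies Definition~\ref{def:almost_flow} with parameters $(\delta'_T,M',\eta,\gamma,\varpi,\omega)$.
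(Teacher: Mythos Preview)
Your argument is correct. The paper actually states this proposition without proof (it is listed among the results that ``complete the results of \cite{brault1}'' and then the text moves directly to Proposition~\ref{prop:rate}), so there is nothing to compare against; your verification of the four axioms via Theorem~\ref{thm:1}, the UL bound, and the telescoping decomposition for $\phi^\pi_{t,s,r}$ is exactly the kind of routine check the authors evidently had in mind.
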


The prototypical example for the next result is $\varpi(\tau)=\tau^\theta$
for some $\theta>1$. In this case, it slightly improves the 
rate of convergence as $\theta-1$ with respect to the one given in \cite{brault1}
which is $\theta-1-\epsilon$ for any $\epsilon>0$.

\begin{proposition}[Rate of convergence]
    \label{prop:rate}
    Let $\phi$ be an almost flow in the same galaxy as a Lipschitz flow 
    $\psi$ with $\normlip{\psi_{t,s}}\leq (1+\delta_T)$  
    and $\normsup{\psi_{t,s}-\phi_{t,s}}\leq K\varpi(\omega_{s,t})$
for any $(s,t)\in\rTT^2$.
Let us assume that $\varpi$ is such that 
for a bounded function $\mu$, 
$\tau^{-1}\varpi(\tau)\leq \mu(\tau)$ for any $\tau>0$.
Then 
\begin{equation*}
    \normsup{\psi_{t,s}(a)-\phi^\pi_{t,s}(a)}\leq KM(\pi)\omega_{0,T}(1+\delta_T)
    \text{ with }
    M(\pi)\eqdef \sup_{\substack{(r,t)\text{ successive}\\\text{points in }\pi}}\mu(\omega_{r,t}).
\end{equation*}
\end{proposition}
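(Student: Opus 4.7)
The plan is to prove the bound via a standard telescoping identity, comparing the flow $\psi$ (evaluated in one step on a sub-interval) with the almost flow $\phi$ on the same sub-interval, and then summing. Because $\psi$ has the flow property, it tolerates being ``inserted'' at any grid point, and because its spatial Lipschitz constant is bounded by $1+\delta_T$, the accumulated error remains linear in the number of terms rather than exponential. The pointwise hypothesis $\normsup{\psi_{t,s}-\phi_{t,s}}\leq K\varpi(\omega_{s,t})$ then turns each telescoping increment into a $\varpi$-small quantity, and super-additivity of $\omega$ combined with $\varpi(\delta)\leq\delta\mu(\delta)$ converts the sum of $\varpi$'s into the factor $M(\pi)\omega_{0,T}$.

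Concretely, fix $a\in\uV$ and a partition $\pi$; after possibly enlarging $\pi$ we may assume $s=t_0<t_1<\dots<t_n=t$ are its points in $[s,t]$. Write the telescoping
\begin{equation*}
\psi_{t,s}(a)-\phi^\pi_{t,s}(a)=\sum_{i=0}^{n-1}\bigl(\psi_{t,t_{i+1}}(\phi^\pi_{t_{i+1},s}(a))-\psi_{t,t_i}(\phi^\pi_{t_i,s}(a))\bigr).
\end{equation*}
Using the semigroup relation $\psi_{t,t_i}=\psi_{t,t_{i+1}}\circ\psi_{t_{i+1},t_i}$ (here I use that $\psi$ is a true flow, not merely an almost flow) together with the recursion $\phi^\pi_{t_{i+1},s}=\phi_{t_{i+1},t_i}\circ\phi^\pi_{t_i,s}$, each summand becomes
\begin{equation*}
\psi_{t,t_{i+1}}\bigl(\phi_{t_{i+1},t_i}(b_i)\bigr)-\psi_{t,t_{i+1}}\bigl(\psi_{t_{i+1},t_i}(b_i)\bigr),\quad b_i\eqdef\phi^\pi_{t_i,s}(a).
\end{equation*}
Applying $\normlip{\psi_{t,t_{i+1}}}\leq 1+\delta_T$ followed by the standing hypothesis on $\normsup{\psi-\phi}$ bounds this by $(1+\delta_T)K\varpi(\omega_{t_i,t_{i+1}})$.

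It remains to control $\sum_i\varpi(\omega_{t_i,t_{i+1}})$. By the assumption $\varpi(\delta)\leq\delta\mu(\delta)$, each term is at most $\omega_{t_i,t_{i+1}}\,\mu(\omega_{t_i,t_{i+1}})\leq M(\pi)\,\omega_{t_i,t_{i+1}}$, and super-additivity of $\omega$ gives $\sum_i\omega_{t_i,t_{i+1}}\leq\omega_{s,t}\leq\omega_{0,T}$. Combining these estimates yields
\begin{equation*}
\abs{\psi_{t,s}(a)-\phi^\pi_{t,s}(a)}\leq K(1+\delta_T)M(\pi)\,\omega_{0,T},
\end{equation*}
which is the desired bound after taking the supremum over $a$. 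There is no real obstacle; the only mildly delicate point is the book-keeping when $s$ or $t$ is not itself a node of $\pi$, which is handled either by adjoining them to $\pi$ (permissible since $\psi$ has the full flow property) or by absorbing the two leftover ``end pieces'' $\phi_{t_i,s}$ and $\phi_{t,t_j}$ as additional telescoping terms, which contribute at most two extra summands of the same form and hence do not alter the final bound.
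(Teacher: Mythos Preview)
Your argument is correct and is essentially the same telescoping proof the paper gives: define $z_k=\psi_{t,t_k}(\phi^\pi_{t_k,s}(a))$, use the flow property of $\psi$ and the Lipschitz bound to estimate each increment by $(1+\delta_T)K\varpi(\omega_{t_k,t_{k+1}})$, then sum using $\varpi(\delta)\leq\mu(\delta)\delta$ and the super-additivity of $\omega$. Your write-up is in fact cleaner than the paper's, which contains a couple of evident typos in the definition of $z_k$ and in the displayed bound on $\abs{z_{i+1}-z_i}$.
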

\begin{proof}
    The proof follows the one of Theorem~10.30 in \cite[p.~238]{friz}.
    Let $\Set{t_i}_{i=0,\dotsc,n}$ be the points of $\pi\cup\Set{s,t}$ such that 
    $t_0=s$, $t_n=t$ and $(t_i,t_{i+1})$ are successive points in $\pi\cup\Set{s,t}$.
    Set $z_k=\psi_{t,t_k}(\phi_{t_k,t}(a))$. Then 
    \begin{equation*}
	\abs{\psi_{t,s}(a)-\phi^\pi_{t,s}(a)}
	=
	\abs{z_n-z_0}
	\leq
	\sum_{i=0}^{n-1}\abs{z_{i+1}-z_i}. 
    \end{equation*}
    Since $\psi_{t,s}$ is Lipschitz and
    \begin{equation*}
	\abs{z_{i+1}-z_i}\leq (1+\delta_T)\abs{\psi_{t_{k+1},t_k}(\phi_{t_{k+1},t_k}(a))
	-\psi_{t_{k+1},t_k}(\phi_{t_{k+1},t_k}(a))}.
    \end{equation*}
    The result follows easily.
\end{proof}

\section{Stable almost flows}
\label{sec:stable_almost_flow}

In the previous section, we have recalled some results 
from \cite{brault1} which underline the importance of the uniform Lipschitz (UL) condition. 
However, the UL condition is not easy to verify as one has to control uniformly iterated products
of almost flows. A natural approach is then to focus on conditions on the almost flow alone.
In this section, we give a sufficient condition on an almost flow $\phi$ to ensure
that it satisfies the UL condition and then that its galaxy contains
a unique flow which is Lipschitz.

\subsection{The 4-points control}
In this section $\uV, \uV_1,\uV_2,\uV_3$ are Banach spaces and we denote by $\eucnorm{\cdot}$ their norms.
 \begin{definition}[The $4$-points control]
    A function $f:\uV_1\to\uV_2$ is said to satisfy 
    a \emph{$4$-points control} if there exists a non-decreasing, continuous function
    $\widehat{f}:\RR_+\to\RR_+$ and a constant $\wcf\geq 0$ such 
    that 
    \begin{multline}
	\label{eq:def_4_points}
	\abs{f(a)-f(b)-f(c)+f(d)}
	\\
	\leq
	\widehat{f}(\abs{a-b}\vee\abs{c-d})
	\times \left(\abs{a-c}\vee\abs{b-d}\right)+\wcf\abs{a-b-c+d} 
    \end{multline}
    for any $(a,b,c,d)\in\uV_1$.
\end{definition}

Any  Lipschitz function $f$ satisfies
a $4$-points control with $\widehat{f}=2\normlip{f}$ and $\wcf=0$. 
However, for our purpose, we need to consider later more restrictive conditions
on~$\wcf$ and $\widehat{f}$.

Let us start with a simple example. When the derivative of $f$ is $\gamma$-Hölder continuous, 
it restates \cite[Leamm 3.5]{davie05a}.

\begin{lemma}
\label{lem:4_points_f}
Let $f\in\cC^{1}(\uV_1,\uV_2)$, $0<\gamma\leq 1$, with a bounded derivative which
is uniformly continuous with a modulus of continuity $\nu(\tau):=\sup_{\abs{x-y}\leq \tau}\abs{{\nabla}f(y)-{\nabla}f(x)}$.
 Then $f$ satisfies a $4$-points control with 
    \begin{equation*}
	\widehat{f}(x):=\nu(x),\ x\geq 0, 
	\text{ and }\wcf:=\normsup{\nabla f}.
    \end{equation*}
\end{lemma}
\begin{proof}
For any $a,b,c,d\in \uV_1$,
\begin{multline*}
f(a)-f(b)-f(c)+f(d)
\\
=(a-b)\int_0^1\nabla f(au+(1-u)b)\vd u
-(c-d)\int_0^1\nabla f(cu+(1-u)d)\vd u
\\
=(a-b)\int_0^1[\nabla f(au+(1-u)b)-\nabla f(cu+(1-u)d)]\vd u
\\
+(a-b-c+d)\int_0^1\nabla f(cu+(1-u)d)\vd u,
\end{multline*}
which yields to the result.
\end{proof}

Here are a few properties of functions satisfying a $4$-points control. 

\begin{lemma}
    \label{lem:4pc:1}
Let $f,g$ satisfying a $4$-points control with $g$ Lipschitz continuous.
\begin{enumerate}[thm]
    \item\label{lem:4pc:1:i} The function $f$ is locally Lipschitz continuous.
    \item\label{lem:4pc:1:ii} If $f,g:\uV_1\to\uV_2$, then for any $\lambda,\mu\in\RR$, $\lambda f+\mu g$ satisfies a $4$-points control.
    \item\label{lem:4pc:1:iii} If $f:\uV_1\to\uV_2$ and $g:\uV_3\to\uV_1$, then $f\circ g:\uV_3\to\uV_2$ satisfies a $4$-points control. 
    \item\label{lem:4pc:1:iv} If $g:\uV\to\uV$ with $\normlip{g}<1$, then $\id+g$ is invertible and $k\eqdef (\id+g)^{-1}$ is Lipschitz and 
	satisfies a $4$-points control with 
	$\widehat{k}(x)=\widehat{g}(\normlip{k}x)\normlip{k}$
	for $\normlip{k}\leq 1/(1-\normlip{g})$, 
	and $\oldwidecheck{k}=1/(1-\oldwidecheck{g})$.
\end{enumerate}
\end{lemma}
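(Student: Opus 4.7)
The plan is to treat each item in turn, exploiting the fact that \eqref{eq:def_4_points} collapses to familiar two--point estimates in special configurations of the arguments.

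For \ref{lem:4pc:1:i}, I would simply specialize the $4$-points inequality by choosing $c=d=b$. Then $|a-b|\vee|c-d|=|a-c|\vee|b-d|=|a-b-c+d|=|a-b|$, and the bound reduces to $|f(a)-f(b)|\leq\widehat{f}(|a-b|)|a-b|+\widecheck{f}|a-b|$. Since $\widehat{f}$ is continuous and non-decreasing, this yields a Lipschitz constant $\widehat{f}(R)+\widecheck{f}$ on every ball of radius $R$.

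Item \ref{lem:4pc:1:ii} is immediate from the triangle inequality: $\lambda f+\mu g$ satisfies a $4$-points control with $\widehat{\lambda f+\mu g}=|\lambda|\widehat{f}+|\mu|\widehat{g}$ and $\widecheck{\lambda f+\mu g}=|\lambda|\widecheck{f}+|\mu|\widecheck{g}$, nothing to grind through. For \ref{lem:4pc:1:iii}, I would apply the $4$-points control for $f$ to the quadruple $(g(a),g(b),g(c),g(d))$ to get
\begin{equation*}
    |f\circ g(a)-f\circ g(b)-f\circ g(c)+f\circ g(d)|\leq \widehat{f}\bigl(\normlip{g}(|a-b|\vee|c-d|)\bigr)\normlip{g}(|a-c|\vee|b-d|)+\widecheck{f}|g(a)-g(b)-g(c)+g(d)|,
\end{equation*}
using the Lipschitz bound of $g$ inside the two maxima and in the factor after $\widehat{f}$. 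Finally apply the $4$-points control of $g$ to the last term. This produces $\widehat{f\circ g}(x)=\widehat{f}(\normlip{g}x)\normlip{g}+\widecheck{f}\widehat{g}(x)$ and $\widecheck{f\circ g}=\widecheck{f}\widecheck{g}$.

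Item \ref{lem:4pc:1:iv} is the only substantial step. Bijectivity of $\id+g$ and Lipschitz continuity of $k\eqdef(\id+g)^{-1}$ come from the standard contraction argument: for $k(x_i)=y_i$ one has $y_1-y_2=(x_1-x_2)-(g(y_1)-g(y_2))$, so $|y_1-y_2|\leq|x_1-x_2|+\normlip{g}|y_1-y_2|$, giving $\normlip{k}\leq 1/(1-\normlip{g})$. For the $4$-points bound, set $a'=k(a)$, $b'=k(b)$, $c'=k(c)$, $d'=k(d)$ and write $a-b-c+d=(a'-b'-c'+d')+(g(a')-g(b')-g(c')+g(d'))$. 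Applying the $4$-points control of $g$ to the bracketed expression and using $|a'-b'|\vee|c'-d'|\leq\normlip{k}(|a-b|\vee|c-d|)$ together with $|a'-c'|\vee|b'-d'|\leq\normlip{k}(|a-c|\vee|b-d|)$, one obtains a self-referential inequality for $|a'-b'-c'+d'|$ that is absorbed on the left-hand side by the factor $1-\widecheck{g}$, yielding the claimed $\widehat{k}$ and $\widecheck{k}=1/(1-\widecheck{g})$. The main obstacle here is precisely this absorption step: one must keep track of where the Lipschitz constant $\normlip{k}$ enters, and ensure the constant in front of $|a'-b'-c'+d'|$ on the right is strictly less than $1$ so that the self-referential term can be moved to the left.
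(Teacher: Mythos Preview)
Your proposal is correct and follows essentially the same route as the paper's proof. The only cosmetic difference is in \ref{lem:4pc:1:i}: you specialize with $c=d=b$, whereas the paper takes $a=d$ and $b=c$; both collapse \eqref{eq:def_4_points} to the same local Lipschitz estimate, and items \ref{lem:4pc:1:ii}--\ref{lem:4pc:1:iv} match the paper's argument line by line, including the absorption of the $\widecheck{g}\,|a'-b'-c'+d'|$ term via the reverse triangle inequality.
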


\begin{proof}
For showing \ref{lem:4pc:1:i}, we choose $a=d$ and $b=c$ in \eqref{eq:def_4_points} and then,
\begin{equation*}
|f(a)-f(b)|\leq \left[\widehat{f}(|a-b|)+\wcf\right]|a-b|,
\end{equation*}
which proves that $f$ is locally Lipschitz continuous.

Moreover, we can choose $\widehat{\lambda f+\mu g}=|\lambda|\widehat{f}+|\mu|\widehat{g}$ and $\oldwidecheck{(\lambda f+\mu g)}=|\lambda|\wcf+|\mu|\oldwidecheck{g}$ to obtain a $4$-points control on $\lambda f+\mu g$. This proves \ref{lem:4pc:1:ii}.

To show \ref{lem:4pc:1:iii}, we use the fact that $g$ is Lipschitz according to \ref{lem:4pc:1:i}. 
With $h=f\circ g$, 
\begin{multline*}
|h(a)-h(b)-h(c)+h(d)|
\\
\leq \widehat{f}(|g(a)-g(b)|\vee |g(c)-g(d)|)\left[|g(a)-g(c)|\vee |g(b)-g(d)|\right]
\\
 +\wcf\widehat{g}(|a-b|\vee |c-d|)\left[|a-c|\vee |b-d|\right]+\wcf\oldwidecheck{g}|a-b-c+d|
\\
\leq \widehat{f}(\normlip{g}|a-b|\vee |c-d|)\normlip{g}(|a-c|\vee |b-d|)\\
+\wcf\widehat{g}(|a-b|\vee |c-d|)[|a-c|\vee |b-d|]+\wcf\oldwidecheck{g}|a-b-c+d|.
\end{multline*}
This proves that $h$ satisfies the $4$-points control.

It remains to show \ref{lem:4pc:1:iv}. 
From the Lipschitz inverse function theorem \cite[p.~124]{abraham}, $\id+g$ is invertible
with an inverse $k$ that satisfies $\normlip{k}\leq (1-\normlip{g})^{-1}$. 
Besides, 
\begin{align*}
    |a-b-c+d+(g(a)-g(b)-g(c)+g(d))|&\geq (1-\oldwidecheck{g})|a-b-c+d|\\
				   &-\widehat{g}(|a-b|\vee |c-d|)\times\left[|a-c|\vee |b-d|\right],
\end{align*}
which yields to
\begin{align*}
    (1-\oldwidecheck{g})|k(a)-k(b)-k(c)+k(d)|&\leq |a-b-c+d|\\
					  &+\widehat{g}(\normlip{k}|a-b|\vee |c-d|)\normlip{k}\left[|a-c|\vee |b-d|\right].
\end{align*}
Hence, for $x\in\RR_+$, $\widehat{k}= \widehat{g}(\normlip{k} x)\normlip{k}$ and $\oldwidecheck{k}=(1-\oldwidecheck{g})^{-1}$. Therefore, $k$ satisfies a 4-points control.
\end{proof}

The reason for introducing the 4-points control lies in its good behavior with 
respect to composition. More precisely, if $f$ satisfies a 4-points control
while $g$ and $h$ are Lipschitz continuous and bounded,
\begin{align*}
    \normlip{f\circ g-f\circ h}&
    \leq \widehat{f}(\normsup{g-h})\normlip{g}\vee \normlip{h}
    +\oldwidecheck{f}\normlip{g-h},
    \\
    \normsup{f\circ g-f\circ h}&
    \leq \left(\widehat{f}(0)+\oldwidecheck{f}\right)\normsup{g-h}.
\end{align*}


\subsection{Definition of a stable almost flow}

\begin{definition}[{$\varpi$-compatible 4-points control}]
  \label{def:varpi-compatible}
  A family $\phi\in\cF^+(V)$ is said to satisfy a
  \emph{$\varpi$-compatible $4$-points control} if there exists a
  family of functions $(\widehat{\phi}_{t,s})_{(s,t)\in \rTT^2}$ and
  constants $(\oldwidecheck{\phi}_{t,s})_{(s,t)\in\TT^2}$ such that for
  any $[s,t]\subset [0,T]$ the estimate \eqref{eq:def_4_points}
  holds and  
  \begin{align}
      \label{eq:varpi-compatible:1}
      \widehat{\phi}_{s,t}&\leq \widehat{\phi}_{u,v}\text{ for any } [s,t]\subset[u,v]\\
      \label{eq:varpi-compatible:2}
    \widehat{\phi}_{t,s}(\alpha\varpi(\omega_{s,t}))&\leq \phi^{\circledast}(\alpha)\varpi(\omega_{s,t})
    \text{ for any }\alpha\geq 0,\ (s,t)\in\rTT^2,
    \end{align}
    where $\phi^\circledast$ is a non-negative function from $\RR_+\to\RR_+$.
\end{definition}
\begin{definition} [Stable almost flow]
    \label{def:stable_almost_flow}
    We say that an almost flow $\phi$ with $\gamma=1$ in~\eqref{eq:h2} is a \emph{stable almost flow} if 
    \begin{enumerate}[thm]
	\item\label{point:saf:i} it satisfies a $\varpi$-compatible $4$-points control
          with
          \begin{align*}
            \oldwidecheck{\phi}_{t,s}\leq 1+\delta_T,
            \end{align*}
	\item\label{point:saf:ii} there is a constant $C\geq 0$ such that for $(r,s,t)\in \rTT^3$, 
	    \begin{equation}
		\label{eq:prop_lip_t_s_r}
		\normlip{\phi_{t,s,r}}\leq C \varpi(\omega_{r,t}),
	    \end{equation}
	    where $\phi_{t,s,r}\coleq \phi_{t,s}\circ\phi_{s,r}-\phi_{t,r}$.
\end{enumerate}
We denote the family of stable almost flow $\cS\cA_{\delta_T,\varpi}(V)$.
If we replace assumption $(r,s,t)\in\rTT^3$ by $(r,s,t)\in\lTT^3$, we say that $\phi$ is a \emph{reverse stable almost flow}.
\end{definition}

    \begin{remark} Condition~\eqref{eq:prop_lip_t_s_r} in \ref{point:saf:ii}
	is the Condition H2 (\textquote{$\cC^1$-approximate flow property}) of \cite[Theorem~2.1]{bailleul12a}. 
	The conditions in \ref{point:saf:i} is an alternative to Condition H1 (\textquote{regularity})
	of \cite[Theorem 2.1]{bailleul12a}. Theorem 2 belows  leads to similar conclusions in spirit
	to \cite[Theorem 2.1]{bailleul12a}. Other results have their counterparts in \cite{bailleul12a,bailleul18}.
\end{remark}

\begin{remark}
    A stable almost flow is necessarily Lipschitz, so that  \eqref{eq:h2} in  Definition~\ref{def:almost_flow} holds with
    $\gamma=1$ or $\eta=0$. Lipschitz flows are the central object of this study. However, 
    in \cite{brault3}, we weaken the Lipschitz regularity while ensuring the limiting flow is
    unique in its galaxy.
\end{remark}

    Let us end this section with a simple yet practical example to the Young differential equation, 
    that is approximation of differential equation driven by a path not too irregular so 
    that it implies a Young integral. This example does not focus on a particular kind of 
    path, but shows the interplay between the modulus of continuity of the driving path 
    and the regularity of the vector field. 
    In Section~\ref{sec:applications}, we consider various kind of approximations to deal with RDE. 

\begin{lemma}
Fix $m,d\geq 1$. We choose as control $\omega_{s,t}:=t-s$.
Let $x:[0,T]\to\RR^d$ be a continuous path with modulus of continuity $\mu$, that is 
$\abs{x_{s,t}}\leq \mu(t-s)$  where $x_{s,t}:=x_t-x_s$ for $(s,t)\in\rTT^2$
For $i=1,\dotsc,d$, we consider a family of functions $f_i\in\cC^{1}(\RR^m,\RR^m)$
We assume that each $\nabla f_i$ is uniformly continuous. 
We set 
\begin{equation*}
 \nu(\tau):=\max_{i=1,\dotsc,d}
\sup_{\abs{y-x}\leq \tau} \abs{\nabla f_i(y)-\nabla f_i(x)}   
\text{ and }\normsup{\nabla^k f}:=\max_{i=1,\dotsc,d} \normsup{\nabla^k f_i}
\end{equation*}
for $k=0,1$. We assume that for any $\alpha\geq 0$ small enough, there exists 
a constant $\nu^\circledast(\alpha)$ such that 
\begin{equation*}
    \sup_{0<\tau\leq \mu(T)} \frac{\nu(\alpha\tau)}{\nu(\tau)}\leq \nu^\circledast(\tau).
\end{equation*}
We define 
\begin{equation*}
    \phi_{t,s}(a):=a+f(a)x_{s,t}=a+\sum_{i=1}^d f_i(a)x^i_{s,t},\ (s,t)\in\rTT^2,\ a\in\uV
\end{equation*}
and 
\begin{equation*}
\varpi(\tau):=\max\Set{\nu(\mu(\tau)\big)\mu(\tau),\mu(\tau)^2}.
\end{equation*}
If $\varpi$ is a remainder (See Section~\ref{sec:notations}),
then for $T$ small enough, $\phi:=\Set{\phi_{t,s}}_{(s,t)\in\rTT^2}$ is a stable
almost flow.
\end{lemma}

\begin{proof}
    Clearly, $\phi$ satisfies \eqref{eq:h0}-\eqref{eq:h2}.
    With Lemma~\ref{lem:4_points_f}, for any $a,b,c,d\in\RR^m$ and any $(s,t)\in\rTT^2$, 
    \begin{multline*}
	\abs{\phi_{t,s}(a)-\phi_{t,s}(b)-\phi_{t,s}(c)+\phi_{s,t}(d)}
	\leq 
	\abs{a-b-c+d}\left(1+\normsup{\nabla f}\abs{x_{s,t}}\right)
	\\
	+\abs{x_{s,t}}
	\nu(\abs{a-c}\vee \abs{b-d})
	\cdot ( \abs{a-b}\vee \abs{c-d}).
    \end{multline*}
    Therefore, $\phi_{t,s}$ satisfies the 4-points control with 
    \begin{equation*}
	\widehat{\phi_{t,s}}(\rho):=\nu(\rho)\mu(t-s) \nu^\circledast(\normsup{f}\vee1)
      \text{ and }
\phi_{t,s}^\dag:=1+\normsup{\nabla f}\cdot\mu(T)
    \end{equation*}
    
From our construction of $\varpi$, for any $\alpha\geq 0$, 
\begin{equation*}
    \widehat{\phi_{t,s}}(\alpha\varpi(t-s))
    \leq \mu(t-s)\nu\big(\max\Set{\nu(\mu(t-s)),\mu^2(t-s)}\mu(t-s)\big)\nu^\circledast(\alpha)\nu^\circledast(\normsup{f}\vee 1)
\end{equation*}
Since $\nu$ is a modulus of continuity, $\nu(\mu(\tau))\leq \mu(\tau)$. 
Provided that $\tau$ is small enough so that $\mu(\tau)\leq 1$, 
then $\widehat{\phi_{t,s}}$
satisfies \eqref{eq:varpi-compatible:2}. It also clearly satisfies~\eqref{eq:varpi-compatible:1}.
Thus, $\phi$ satisfies a $\varpi$-compatible 4-points control.

Since for any $(r,s,t)\in\rTT^3$, $x_{r,s}+x_{s,t}=x_{r,t}$, a standard computation shows that 
    \begin{equation*}
	\phi_{t,s,r}(a)=f(a+f(a)x_{r,s})x_{s,t}-f(a)x_{s,t}
	\text{ for any }(r,s,t)\in\rTT^3.
    \end{equation*}
    With our choice of $\varpi$, 
    \begin{equation*}
	\normsup{\phi_{t,s,r}}\leq \normsup{\nabla f}\normsup{f}\mu(t-r)^2
	\leq \normsup{\nabla f}\normsup{f}\varpi(t-r)^2.
    \end{equation*}
    Hence, \eqref{eq:h3} is satisfies and $\phi$ is an almost flow.

    Again using Lemma~\ref{lem:4_points_f}, for any $a,b\in\uV$ and any $(r,s,t)\in\rTT^3$,
    \begin{multline*}
	\abs{\phi_{t,s,r}(b)-\phi_{t,s,r}(a)}
	\leq 
	\\
	\abs{a-b}
	\nu(\normsup{f}\abs{x_{s,t}})\abs{x_{s,t}}(1+\normsup{\nabla f}\abs{x_{r,s}})
	+\abs{a-b}\cdot\normsup{\nabla f}\cdot\abs{x_{r,s}}\cdot\abs{x_{s,t}}
	\\
	\leq \abs{a-b}\varpi(t-r)\nu^\circledast(\normsup{f})(1+\normsup{\nabla f}).
    \end{multline*}
    This proves that $\phi$ satisfies also \ref{point:saf:ii} and is a stable almost flow.
\end{proof}

\begin{example}
    \label{ex:1} The Young case corresponds to $\mu(\tau)=\tau^\alpha$ for some $\alpha\in(0,1]$
    and $\nu(\tau)=C\tau^\gamma$ for some $\gamma\in(0,1]$ with $\alpha(1+\gamma)>1$
    (and then $\alpha>1/2$). This choice of $\nu$ means that the derivative of $f$
    is $\gamma$-Hölder continuous.
    This way, 
    \begin{equation*}
	\varpi(\tau):=C\tau^{\alpha(1+\gamma)}\vee \tau^{2\alpha}.
    \end{equation*}
    which satisfies \eqref{eq:h4} as $\alpha(1+\gamma)>1$ and $2\alpha>1$.
\end{example}


\subsection{Non linear sewing lemma for stable almost flow}

In this section, we introduce our main tools to control the iterated products
(Notation~\ref{not:iteratedproducts}) on a partition.  The key idea
is borrowed from the claim in \cite[p.~6]{davie05a}.

\begin{definition}[Successive points]
    \label{def:successivepoints}
Let $\pi$ be a partition of $[0,T]$. Two points $s$ and $t$ of $\pi$
are said to be \emph{at distance $k$} if there are $k-1$ points between them in $\pi$. 
Points at distance $1$ are then \emph{successive points} in $\pi$. 
\end{definition}

\begin{lemma} 
    \label{lem:davie}
    Let us consider a family $U\eqdef \Set{U_{s,t}}_{s,t\in\pi,s\leq t}$ with values in $\RR_+$ satisfying
     for any $(r,s,t)\in\pi\bigcap \TT^3,$ 
    \begin{gather}
    \notag
    U_{r,s}\leq D\varpi(\omega_{r,s})\text{ when $r$ and $s$ are successive points},\\
    \label{eq:n2_new}
    U_{r,t}\leq(1+\alpha_T)U_{r,s}+(1+\alpha_T)U_{s,t}+B\varpi(\omega_{r,t}), 
    \end{gather}
    for some constants $D\geq 1$, $B\geq 0$ and $\alpha_T\geq 0$ that decreases to $0$ as $T\to 0$.

    Then for all $T>0$ such that $\varkappa(1+\alpha_T)^2+\delta_T<1$, 
    \begin{gather}
	\label{eq:davie:4}
	U_{r,t}\leq A\varpi(\omega_{r,t}),\ \forall (r,t)\in[0,T]\cap\pi^2,\\
	\label{eq:davie:4:cst}
	\text{ with }A\eqdef \frac{D(1+\delta_T)(1+\alpha_T)^2+B(2+\alpha_T)}{1-(\varkappa(1+\alpha_T)^2+\delta_T)}.
    \end{gather}
    In particular, $A$ does not depend on the choice of the partition.
\end{lemma}
\begin{proof}
We perform an induction on the distance $m$ between points in $\pi$.

If $m=1$, then \eqref{eq:davie:4} is true since that $A\geq D$. 

Let us assume that this is true for any two points at distance $m$. Fix 
two points $r$ and $t$ at distance $m+1$ in $\pi$. Hence, there exists
two successive points $s$ and $s'$ in $\pi$ such that 
\begin{equation*}
    \omega_{r,s}\leq \frac{\omega_{r,t}}{2}
    \text{ and }\omega_{s',t}\leq \frac{\omega_{r,t}}{2}.
\end{equation*}

Applying \eqref{eq:n2_new} twice with $(r,s,t)$ and $(s,s',t)$,
\begin{multline*}
    U_{r,t}\leq (1+\alpha_T) U_{r,s}+(1+\alpha_T) U_{s,t}+B\varpi(\omega_{r,t})\\
    \leq (1+\alpha_T)U_{r,s}+(1+\alpha_T)^2(U_{s,s'}+U_{s',t})
    +(2+\alpha_T)B\varpi(\omega_{r,t}).
\end{multline*}
Both $U_{r,s}$ and $U_{s,t}$ satisfy the induction property.
With \eqref{eq:h4}, 
\begin{multline*}
U_{r,t} \leq 
2A(1+\alpha_T)^2\varpi\left(\frac{\omega_{r,t}}{2}\right)
    +(1+\alpha_T)^2D\varpi(\omega_{s,s'})
    +(2+\alpha_T)B\varpi(\omega_{r,t})\\
    \leq \left[A\left(\varkappa(1+\alpha_T)^2+\delta_T\right)
    +D(1+\alpha_T)^2(1+\delta_T)+(2+\alpha_T)B\right]\varpi(\omega_{r,t}).
\end{multline*}
Our choice of $A\geq D$ in \eqref{eq:davie:4:cst} ensures the results at level $m+1$.
The control \eqref{eq:davie:4} is then true whatever the partition.
\end{proof}

The following proposition justifies Definition~\ref{def:stable_almost_flow}.

\begin{theorem}
\label{prop:stable_almost_flow_UL}
If $\phi\in\cS\cA_{\delta_T,\varpi}(\uV)$ is a stable almost flow 
then for any partition $\pi$, 
\begin{align}
\label{eq:prop_UL_4_points}
\normlip{\phi^\pi_{t,s}-\phi_{t,s}}\leq L\varpi(\omega_{s,t}), \forall (s,t)\in\rTT^2,
\end{align}
where $L$ is a constant that depends on $T$, $T\mapsto \delta_T$, $\varkappa$, $\omega$,
$\widehat{\phi}$, $\phi^\circledast$ and $C$ in \eqref{eq:prop_lip_t_s_r}. In particular, 
the almost flow $\phi$ satisfies the condition UL, up to changing $\delta_T$.
\end{theorem}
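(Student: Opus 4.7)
My plan is a sewing-style induction on the size of the partition that removes one point at a time and estimates the Lipschitz norm of the resulting change, in direct analogy with the proof of Theorem~\ref{thm:1}, but now controlling the stronger Lipschitz seminorm instead of only the sup norm. Given $\pi=\{s=t_0<\dotsb<t_n=t\}$, super-additivity of $\omega$ guarantees an interior index $k$ with $\omega_{t_{k-1},t_{k+1}}\leq 2\omega_{s,t}/(n-1)$; I remove $t_k$ and set $\pi'=\pi\setminus\{t_k\}$. Writing $F\eqdef\phi^{\pi}_{t,t_{k+1}}$ and $G\eqdef\phi^{\pi}_{t_{k-1},s}$ (both of which agree with their $\pi'$-counterparts because the partition points outside $[t_{k-1},t_{k+1}]$ are unaffected), the elementary identity
\begin{equation*}
\phi^{\pi}_{t,s}-\phi^{\pi'}_{t,s}=F\circ(g_1\circ G)-F\circ(g_2\circ G),\ g_1\eqdef\phi_{t_{k+1},t_k}\circ\phi_{t_k,t_{k-1}},\ g_2\eqdef\phi_{t_{k+1},t_{k-1}},
\end{equation*}
reduces the comparison to the single cocycle term $g_1-g_2=\phi_{t_{k+1},t_k,t_{k-1}}$, for which \eqref{eq:h3} gives $\normsup{g_1-g_2}\leq M\varpi(\omega_{t_{k-1},t_{k+1}})$ and the new hypothesis \eqref{eq:prop_lip_t_s_r} gives $\normlip{g_1-g_2}\leq C\varpi(\omega_{t_{k-1},t_{k+1}})$.

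Next I would apply the 4-points Lipschitz-composition inequality displayed at the end of Section~4.1,
\begin{equation*}
\normlip{F\circ h_1-F\circ h_2}\leq \widehat{F}(\normsup{h_1-h_2})(\normlip{h_1}\vee\normlip{h_2})+\widecheck{F}\normlip{h_1-h_2},
\end{equation*}
to $h_i\eqdef g_i\circ G$. With $\gamma=1$ in \eqref{eq:h2} and \eqref{eq:h_eta}, each elementary $\phi_{u,v}$ is $(1+2\delta_T)$-Lipschitz, so the $\normlip{h_i}$ are uniformly controlled, and the $\varpi$-compatibility from Definition~\ref{def:varpi-compatible} converts $\widehat{F}(M\varpi(\omega_{t_{k-1},t_{k+1}}))$ into $\phi^{\circledast}(M)\varpi(\omega_{t_{k-1},t_{k+1}})$. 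Together with the bound $\widecheck{F}\leq 1+\delta_T$-type estimate to be propagated through the induction (see the main obstacle below), each removal contributes at most a constant multiple of $\varpi(\omega_{t_{k-1},t_{k+1}})$ to the Lipschitz norm of the difference.

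Iterating the removal yields a telescoping sum $\phi^{\pi}_{t,s}-\phi_{t,s}=\sum_{j}(\phi^{\pi_j}_{t,s}-\phi^{\pi_{j-1}}_{t,s})$. The key dyadic estimate, identical to the one powering Theorem~\ref{thm:1}, uses super-additivity of $\omega$ and \eqref{eq:h4} (which makes $\varpi$ behave subadditively on binary splits) to bound $\sum_j\varpi(\omega_{t_{k_j-1},t_{k_j+1}})$ by $L\varpi(\omega_{s,t})$ for a constant $L$ independent of $\pi$. The UL condition follows by combining the resulting Lipschitz estimate on $\phi^{\pi}_{t,s}-\phi_{t,s}$ with $\normlip{\phi_{t,s}}\leq 1+2\delta_T$, absorbing the extra term $L\varpi(\omega_{0,T})$ into a slightly enlarged $\delta_T$ (which still vanishes with $T$).

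The hardest point will be the book-keeping of the 4-points constants along the iteration: the naive application of Lemma~\ref{lem:4pc:1}\ref{lem:4pc:1:iii} produces $\widecheck{F}\leq\prod_i\widecheck{\phi_{t_{i+1},t_i}}\leq(1+\delta_T)^n$, which is not bounded uniformly in $|\pi|$. To close the induction I would refine this by exploiting the $\varpi$-compatibility and the fact that $\phi^{\pi}_{t,t_{k+1}}$ is close to the identity (by \eqref{eq:sewing_sup} already at our disposal), so that the $\widecheck{F}$ can be tracked additively in $\omega_{t_{k+1},t}$ rather than multiplicatively in the number of steps. Concretely, this means running a coupled induction in which the conclusion \eqref{eq:prop_UL_4_points} is proved together with a Gronwall-type bound of the form $\widecheck{\phi^{\pi}_{u,v}}\leq e^{C\omega_{u,v}}$, the two feeding into one another through the composition lemma and the $\varpi$-compatibility. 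This is the crux of the argument, and the constant $L$ will ultimately depend on $T$, $\varkappa$, $\omega_{0,T}$, $\phi^{\circledast}$, $\widecheck{\phi}$ and $C$ precisely through this bootstrap.
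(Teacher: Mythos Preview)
Your plan has a genuine gap precisely where you flag it. In your decomposition
\[
\phi^{\pi}_{t,s}-\phi^{\pi'}_{t,s}=F\circ(g_1\circ G)-F\circ(g_2\circ G),\qquad F=\phi^{\pi}_{t,t_{k+1}},
\]
you must apply the 4-points control to the \emph{iterated product} $F$, and that forces you to control $\widecheck{F}$. From Definition~\ref{def:stable_almost_flow} you only know $\widecheck{\phi}_{t,s}\le 1+\delta_T$, a bound that does \emph{not} scale with $\omega_{s,t}$; composition (Lemma~\ref{lem:4pc:1}\ref{lem:4pc:1:iii}) gives $\widecheck{F}\le(1+\delta_T)^{n}$, which is unbounded in~$n$. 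Your proposed fix, a Gronwall bound $\widecheck{\phi^{\pi}_{u,v}}\le e^{C\omega_{u,v}}$, would require $\widecheck{\phi}_{t_{i+1},t_i}\le 1+C\,\omega_{t_i,t_{i+1}}$ (with exponent~$1$, since $\omega$ is super-additive), but the hypotheses give no such refinement. So the coupled induction cannot close from the stated assumptions.

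The paper avoids this obstacle entirely by never invoking a 4-points control on an iterated product. It fixes $r$ and sets
\[
U^{\pi}_{s,t}\eqdef\normlip{\phi^{\pi}_{t,r}-\phi_{t,s}\circ\phi^{\pi}_{s,r}},
\]
so that in the recursion $U^{\pi}_{s,u}\le U^{\pi}_{t,u}+(1+\delta_T)U^{\pi}_{s,t}+\dotsb$ the 4-points control is applied only to the \emph{single} map $\phi_{u,t}$, yielding the harmless $\widecheck{\phi}_{u,t}\le 1+\delta_T$. The iterated product enters solely through its Lipschitz constant $L^{\pi}\eqdef\sup\normlip{\phi^{\pi}_{t,s}}$, which appears as a multiplicative factor in the inhomogeneous term. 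The Davie lemma then gives $U^{\pi}_{s,t}\le L^{\pi}\alpha_T\varpi(\omega_{s,t})$; specialising to $r=s$ and taking $T$ small so that $\alpha_T\varpi(\omega_{0,T})\le\tfrac12$ bootstraps $L^{\pi}$ to a uniform bound. This reorganisation --- putting the single step on the outside so the 4-points control is used once per recursion, and letting $L^{\pi}$ absorb the iteration --- is the missing idea.
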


\begin{remark}
 When $\phi$ is a stable almost flow, we assume that $\gamma=1$ in
 Definition~\ref{def:almost_flow}. This implies that \eqref{eq:h2} becomes
  \begin{align}
        \label{eq:h2:gamma1}
    \abs{\phi_{t,s}(b)-\phi_{t,s}(a)}\leq (1+\delta_T)\abs{b-a}.
  \end{align}
\end{remark}

\begin{proof}
Let us choose a partition $\pi$. 
Let $r\in\TT$ be fixed and  $(s,t)\in\rTT^2,$ such that $r\leq s$,
\begin{equation*}
    U^\pi_{s,t}\eqdef \normlip{\phi_{t,r}^\pi-\phi_{t,s}\circ\phi_{s,r}^\pi}.
\end{equation*}

For any $0\leq r\leq s\leq t\leq u\leq T$, 
\begin{align}
    U^\pi_{s,u}\leq U^\pi_{t,u}+\normlip{\phi_{u,t}\circ\phi_{t,r}^\pi-\phi_{u,t}\circ\phi_{t,s}
\circ\phi^\pi_{s,r}}+\normlip{\phi_{u,t}\circ\phi_{t,s}\circ\phi_{s,r}^\pi-\phi_{u,s}\circ\phi_{s,r}^\pi}.
\label{eq:proof_decomp}
\end{align}
With the $4$-points control on $\phi_{u,t}$, 
\begin{multline*}
\normlip{\phi_{u,t}\circ\phi_{t,r}^\pi-\phi_{u,t}\circ\phi_{t,s}
\circ\phi^\pi_{s,r}}\\
\leq \widehat{\phi}_{u,t}\left(\normsup{\phi^\pi_{t,r}-\phi_{t,s}\circ\phi_{s,r}^\pi}\right)
\times\left(\normlip{\phi_{t,r}^\pi}\vee(1+\delta_T)
\normlip{\phi_{s,r}^\pi}\right)
+\oldwidecheck{\phi}_{u,t}U^\pi_{s,t}.
\end{multline*}
According to \eqref{eq:sewing_sup}  for $T$ small enough,
\begin{equation*}
\normsup{\phi^\pi_{t,r}-\phi_{t,s}\circ\phi_{s,r}^\pi}\leq C\varpi(\omega_{s,t}).
\end{equation*}
Since $\widehat{\phi_{u,t}}$ satisfies \eqref{eq:varpi-compatible:1}-\eqref{eq:varpi-compatible:2}, 
with the control~\eqref{eq:h2} with $\gamma=1$ of the Definition~\ref{def:almost_flow}
of almost flow, 
\begin{multline*}
\normlip{\phi_{u,t}\circ\phi_{t,r}^\pi-\phi_{u,t}\circ\phi_{t,s}
\circ\phi^\pi_{s,r}}\\
\leq \phi^\circledast(C)\varpi(\omega_{r,t})\left(\normlip{\phi_{t,r}^\pi}\vee(1+\delta_T)
\normlip{\phi_{s,r}^\pi}\right)+\oldwidecheck{\phi}_{u,t}U^\pi_{s,t}.
\end{multline*}

For bounding the last term of \eqref{eq:proof_decomp}, \eqref{eq:prop_lip_t_s_r} yields
\begin{align*}
\normlip{\phi_{u,t}\circ\phi_{t,s}\circ\phi_{s,r}^\pi
-\phi_{u,s}\circ\phi_{s,r}^\pi}\leq C\varpi(\omega_{s,u})\normlip{\phi_{s,r}^\pi}.
\end{align*}

Assuming that $r$, $s$, $t$ and $u$ belong to $\pi$ and combining
these inequalities and the fact the $\phi$ is stable (see Definition~\ref{def:stable_almost_flow}), 
\begin{equation*}
    U^\pi_{s,u}\leq U^\pi_{t,u}+(1+\delta_T)U^\pi_{s,t}+L^\pi (\phi^{\circledast}(C)(2+\delta_T)+C)\varpi(\omega_{s,u})
\end{equation*}
where 
\begin{equation*}
    L^\pi\eqdef \sup_{\substack{(s,t)\in\rTT^2\\s,t\in\pi}} \normlip{\phi_{t,s}^\pi}. 
\end{equation*}

For two successive points $s$ and $t\geq s$ of $\pi$
(See Definition~\ref{def:successivepoints}), $\phi^\pi_{t,r}=\phi_{t,s}\circ\phi^\pi_{s,r}$
so that $U^\pi_{s,t}=0$.

We assume that $T$ is small enough so that $\varkappa(1+\delta_T^2+\delta_T)<1$.
From Lemma~\ref{lem:davie}, 
\begin{equation}
    \label{eq:14}
    U^\pi_{s,t}\leq L^\pi\alpha_T\varpi(\omega_{s,t}) 
    \text{ with }\alpha_T\eqdef (2+\delta_T)\frac{(\phi^{\circledast}(C)(2+\delta_T)+C)}{1-\varkappa(1+\delta_T^2+\delta_T)}.
\end{equation}
In particular, for $r=s$, $U_{r,t}=\normlip{\phi^\pi_{t,r}-\phi_{t,r}}$.

Let us bound $L^\pi$. For this, with \eqref{eq:14} and (\ref{eq:h2})
with $\gamma=1$,
\begin{align*}
L^\pi &\leq \sup_{(s,t)\in\rTT^2}\normlip{\phi_{t,s}^\pi-\phi_{t,s}}
    +\max_{(s,t)\in\rTT^2}\normlip{\phi_{t,s}}\\
&\leq L^\pi\alpha_T\varpi(\omega_{0,T})+1+\delta_T.
\end{align*}
For $T$ small enough so that $\alpha_T\varpi(\omega_{0,T})\leq 1/2$, 
$L^\pi$ is uniformly bounded. Injecting this control of $L^\pi$ in \eqref{eq:14}, 
\begin{equation}
    \label{eq:13}
    \normlip{\phi^\pi_{t,r}-\phi_{t,r}}
    \leq K\varpi(\omega_{r,t}), \ \forall (r,t)\in\rTT^2,\ r,t\in\pi,
\end{equation}
for some constant $K$ that does not depend on the partition $\pi$.

It remains to establish \eqref{eq:13} for any pair of time $(r,t)\in\rTT^2$.

For this, let $t_\pi$ (resp. $r_\pi$) be the greatest (resp. smallest) point of $\pi$ 
below (resp. above) $t$ (resp. $r$). Then with the definition of
$\phi^\pi$ (Notation~\ref{not:iteratedproducts}),
\begin{equation*}
    \phi^\pi_{t,r}-\phi_{t,r}=\phi_{t,t_\pi}\circ \phi^\pi_{t_\pi,r}-\phi_{t,t_\pi}\circ\phi_{t_\pi,r}
    +\phi_{t,t_\pi,r}.
\end{equation*}
With \eqref{eq:prop_lip_t_s_r} and \eqref{eq:h2} of
Definition~\ref{def:almost_flow} with $\gamma=1$,
\begin{equation}
    \label{eq:16}
    \normlip{\phi^\pi_{t,r}-\phi_{t,r}}
    \leq (1+\delta_T)\normlip{\phi^\pi_{t_\pi,r}-\phi_{t_\pi,r}}+C\varpi(\omega_{r,t}).
\end{equation}
Similarly, 
\begin{equation*}
\phi^\pi_{t_\pi,r}-\phi_{t_\pi,r}=\phi^\pi_{t_\pi,r_\pi}\circ \phi^\pi_{r_\pi,r}-\phi_{t_\pi,r_\pi}\circ\phi_{r_\pi,r}
    +\phi_{t_\pi,r_\pi,r}.
\end{equation*}
Using (\ref{eq:sewing_sup}) and \eqref{eq:13},
\begin{multline}
    \label{eq:17}
    \normlip{\phi^\pi_{t_\pi,r}-\phi_{t_\pi,r}}
    \leq \normlip{\phi^\pi_{t_\pi,r_\pi}-\phi_{t_\pi,r_\pi}}\cdot\normlip{\phi_{r_\pi,r}}
    +C\varpi(\omega_{r,t})\\
    \leq (1+\delta_T)K\varpi(\omega_{r,t})+C\varpi(\omega_{r,t}).
\end{multline}
Inequality~\eqref{eq:prop_UL_4_points}, which is \eqref{eq:13} applied for any $(r,t)\in\rTT^2$,
stems from \eqref{eq:13}, \eqref{eq:16} and~\eqref{eq:17}.
\end{proof}

\begin{corollary}
\label{prop:stable_almost_flow_unique_flow}
If $\phi\in\cS\cA_{\delta_T,\varpi}(\uV)$ is a stable almost flow 
then there exists a unique Lipschitz flow $\psi$ in the galaxy containing $\phi$. Moreover, there is a constant $L\geq 0$ such that for all $(s,t)\in\uV$,
\begin{equation}
\label{eq:psi_phi_lip}
\normlip{\psi_{t,s}-\phi_{t,s}}\leq L\varpi(\omega_{s,t}).
\end{equation}
\end{corollary}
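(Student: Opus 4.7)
The plan is to assemble the result from the machinery already in place: Theorem~\ref{prop:stable_almost_flow_UL} (which says a stable almost flow satisfies UL), the Proposition stating that an almost flow satisfying UL converges to a Lipschitz flow, and Proposition~\ref{prop:unique_flow} (uniqueness of a Lipschitz flow in a galaxy). The only genuinely new content is the Lipschitz-norm rate \eqref{eq:psi_phi_lip}, which I will obtain by passing to the limit in~\eqref{eq:prop_UL_4_points}.

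\textbf{Step 1 (existence).} Since $\phi\in\cS\cA_{\delta_T,\varpi}(\uV)$, Theorem~\ref{prop:stable_almost_flow_UL} implies that $\phi$ satisfies the UL condition (possibly after replacing $\delta_T$ by a larger function with the same limit behaviour at $0$). The proposition following Definition~\ref{prop:ul_condition} then produces a Lipschitz flow $\psi$ with $\normlip{\psi_{t,s}}\leq 1+\delta_T$ such that $\phi^\pi_{t,s}(a)\to\psi_{t,s}(a)$ pointwise for every $a\in\uV$ and $(s,t)\in\rTT^2$. In particular $\psi$ lies in the galaxy of $\phi$ in the sense of Definition~\ref{def_galaxy}: by Theorem~\ref{thm:1} we already know $\normsup{\phi^\pi_{t,s}-\phi_{t,s}}\leq L\varpi(\omega_{s,t})$ for every partition $\pi$, and pointwise passage to the limit gives $\normsup{\psi_{t,s}-\phi_{t,s}}\leq L\varpi(\omega_{s,t})$.

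\textbf{Step 2 (uniqueness).} Uniqueness of the Lipschitz flow in the galaxy of $\phi$ is a direct application of Proposition~\ref{prop:unique_flow}: any other Lipschitz flow $\psi'$ in the same galaxy would have to coincide with $\psi$, because $\phi^\pi_{t,s}(a)$ converges both to $\psi_{t,s}(a)$ and to $\psi'_{t,s}(a)$.

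\textbf{Step 3 (Lipschitz rate \eqref{eq:psi_phi_lip}).} This is the step that needs care, but it is still elementary. Fix $(s,t)\in\rTT^2$ and two distinct points $a,b\in\uV$. Inequality \eqref{eq:prop_UL_4_points} from Theorem~\ref{prop:stable_almost_flow_UL} gives
\begin{equation*}
\abs{\phi^\pi_{t,s}(a)-\phi^\pi_{t,s}(b)-\phi_{t,s}(a)+\phi_{t,s}(b)}\leq L\varpi(\omega_{s,t})\abs{a-b}
\end{equation*}
uniformly in the partition $\pi$. Letting the mesh of $\pi$ go to zero and using the pointwise convergence $\phi^\pi_{t,s}(a)\to\psi_{t,s}(a)$ and $\phi^\pi_{t,s}(b)\to\psi_{t,s}(b)$ from Step~1, the left-hand side converges to $\abs{\psi_{t,s}(a)-\psi_{t,s}(b)-\phi_{t,s}(a)+\phi_{t,s}(b)}$. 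Dividing by $\abs{a-b}$ and taking the supremum over $a\neq b$ yields $\normlip{\psi_{t,s}-\phi_{t,s}}\leq L\varpi(\omega_{s,t})$, which is \eqref{eq:psi_phi_lip}.

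The main obstacle is essentially accounting, since the hard analytic work has already been done in Theorem~\ref{prop:stable_almost_flow_UL}; the one mildly subtle point is that we do not have uniform convergence of $\phi^\pi_{t,s}$ on $\uV$, only pointwise convergence, so the Lipschitz bound on $\psi_{t,s}-\phi_{t,s}$ must be obtained \emph{via} pointwise limits of the 4-point style quantity $\phi^\pi_{t,s}(a)-\phi^\pi_{t,s}(b)-\phi_{t,s}(a)+\phi_{t,s}(b)$ rather than by any Lipschitz closure argument.
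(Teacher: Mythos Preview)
Your proof is correct and follows essentially the same approach as the paper's own proof: invoke Theorem~\ref{prop:stable_almost_flow_UL} to obtain the UL condition, apply the proposition following Definition~\ref{prop:ul_condition} for existence of the Lipschitz flow, use Proposition~\ref{prop:unique_flow} for uniqueness, and pass to the limit in~\eqref{eq:prop_UL_4_points} to get~\eqref{eq:psi_phi_lip}. Your Step~3 spells out the pointwise-limit argument for the Lipschitz bound more carefully than the paper does (the paper simply writes ``Passing to the limit in~\eqref{eq:prop_UL_4_points} leads to~\eqref{eq:psi_phi_lip}''), but this is elaboration rather than a different method.
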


\begin{proof}
According to Theorem~\ref{prop:stable_almost_flow_UL}, $\phi$
satisfies the UL condition of Proposition~\ref{prop:ul_condition}.
Hence, it converges in the sup-norm to a Lipschitz flow $\psi\sim\phi$.
According to Proposition~\ref{prop:unique_flow},
$\psi$ is the only flow in the galaxy of $\phi$.

Passing to the limit in \eqref{eq:prop_UL_4_points} leads to \eqref{eq:psi_phi_lip}.
\end{proof}
 
\section{Perturbations}
\label{sec:perturbation_lip}
In \cite{brault1}, we have introduced the notion of perturbation of
almost flow. This notion still gives an almost flow. 

\subsection{Additive perturbations}

We recall that $\eta$, $\delta_T$ and $\gamma$ are defined in Section~\ref{sec:controls}.
\begin{definition}[Perturbation]
    \label{def:perturb}
    A \emph{perturbation} is an element $\epsilon\in\cF(\uV)$ such that 
     for any $(s,t)\in\rTT^2$ and $a,b\in\uV$,
    \begin{gather}
	\label{eq:epsilon:1}
	\epsilon_{t,t}=0,\\
	\label{eq:epsilon:2}
	\normsup{\epsilon_{t,s}}\leq C\varpi(\omega_{s,t}),\\
	\label{eq:epsilon:3}
	\abs{\epsilon_{t,s}(b)-\epsilon_{t,s}(a)}\leq \delta_T\abs{b-a}+ \eta(\omega_{s,t})\abs{b-a}^\gamma,
    \end{gather}
    where $\eta$ is defined by \eqref{eq:h_eta} and $C\geq 0$ is a constant.  
\end{definition}

\begin{proposition}[{\cite[Proposition~1]{brault1}}]
    \label{prop:pert:1}
    If $\phi\in\cF(\uV)$ is an almost flow and $\epsilon\in\cF(\uV)$
    is a perturbation, then $\psi\eqdef\phi+\epsilon$ is an almost flow
    in the same galaxy as $\phi$.
\end{proposition}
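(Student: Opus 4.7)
The plan is to verify each of the four defining properties \eqref{eq:h0}--\eqref{eq:h3} of Definition~\ref{def:almost_flow} for $\psi\eqdef\phi+\epsilon$, possibly at the cost of enlarging the constants $\delta_T$, $M$, and the remainder $\eta$. The fact that $\psi$ lies in the galaxy of $\phi$ is then immediate from $\psi-\phi=\epsilon$ and \eqref{eq:epsilon:2}, which give $\normf{\varpi}{\psi-\phi}\leq C<+\infty$.

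I would dispatch the three easy conditions first. Property \eqref{eq:h0} is trivial: $\psi_{t,t}=\phi_{t,t}+\epsilon_{t,t}=\id$ by \eqref{eq:epsilon:1}. For \eqref{eq:h1}, the triangle inequality gives $\normsup{\psi_{t,s}-\id}\leq \delta_T+C\varpi(\omega_{s,t})$, and since $\varpi(\omega_{0,T})\to 0$ as $T\to 0$, this is absorbed into a new $\delta_T$ for $T$ small enough. For \eqref{eq:h2}, adding the Lipschitz-type estimates for $\phi$ and for $\epsilon$ directly yields
\begin{equation*}
  \abs{\psi_{t,s}(b)-\psi_{t,s}(a)}\leq (1+2\delta_T)\abs{b-a}+2\eta(\omega_{s,t})\abs{b-a}^\gamma,
\end{equation*}
which is of the right form after rebaptizing $\delta_T$ and $\eta$.

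The main work is property \eqref{eq:h3}, i.e.\ bounding $\psi_{t,s,r}=\psi_{t,s}\circ\psi_{s,r}-\psi_{t,r}$. I would decompose
\begin{align*}
  \psi_{t,s,r}(a)&=\bigl[\phi_{t,s}(\phi_{s,r}(a)+\epsilon_{s,r}(a))-\phi_{t,s}(\phi_{s,r}(a))\bigr]\\
  &\quad+\phi_{t,s,r}(a)+\epsilon_{t,s}(\psi_{s,r}(a))-\epsilon_{t,r}(a).
\end{align*}
The first bracket is controlled by \eqref{eq:h2} applied to $\phi_{t,s}$ together with \eqref{eq:epsilon:2}, giving the bound
\begin{equation*}
  (1+\delta_T)C\varpi(\omega_{r,s})+\eta(\omega_{s,t})C^\gamma\varpi(\omega_{r,s})^\gamma.
\end{equation*}
The remaining terms are bounded by $M\varpi(\omega_{r,t})+2C\varpi(\omega_{r,t})$ via \eqref{eq:h3} for $\phi$ and \eqref{eq:epsilon:2} for $\epsilon$. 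Super-additivity of $\omega$ and monotonicity of $\varpi,\eta$ let me replace $\omega_{r,s}$ and $\omega_{s,t}$ by $\omega_{r,t}$ everywhere, so everything aggregates into some $M'\varpi(\omega_{r,t})$ once the Hölder-type term is tamed.

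The one step that needs a word of justification, and what I expect to be the main (if minor) obstacle, is the control of the Hölder cross term $\eta(\omega_{s,t})C^\gamma\varpi(\omega_{r,s})^\gamma$. Here the compatibility hypothesis \eqref{eq:h_eta} is crucial: combined with monotonicity, it yields
\begin{equation*}
  \eta(\omega_{s,t})\varpi(\omega_{r,s})^\gamma\leq \eta(\omega_{r,t})\varpi(\omega_{r,t})^\gamma\leq \delta_T\varpi(\omega_{r,t}),
\end{equation*}
so this contribution is absorbed in the $\varpi(\omega_{r,t})$ scale with a prefactor that vanishes with $T$. Collecting the constants produces a single $M'\geq 0$ for which $\normsup{\psi_{t,s,r}}\leq M'\varpi(\omega_{r,t})$, completing the verification of \eqref{eq:h3} and hence the proposition.
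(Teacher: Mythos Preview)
Your proof is correct. Note, however, that the paper does not actually prove this proposition: it is quoted from \cite{brault1} and no argument is given here. That said, your decomposition of $\psi_{t,s,r}$ and your handling of the H\"older cross term via \eqref{eq:h_eta} exactly mirror the computation the paper carries out in the proof of Proposition~\ref{prop:perturbation_flow}, so your approach is the expected one.
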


We introduce now the notion of Lipschitz perturbation, which 
is a perturbation on which a control stronger than \eqref{eq:epsilon:3}
holds.

\begin{definition}[Lipschitz perturbation]
\label{def:lip_perturbation}
    A \emph{Lipschitz perturbation} is a perturbation $\epsilon\in\cF^+(\uV)$  
with satisfies for a constant $C\geq 0$
\begin{equation}
\label{eq:epsi_lip}
\normlip{\epsilon_{t,s}}\leq C \varpi(\omega_{s,t}),\ \forall (s,t)\in\rTT^2.
\end{equation}
\end{definition}

Stable almost flows remain stable almost flows under Lipschitz perturbations.

\begin{proposition}[Stability of stable almost flow under Lipschitz perturbation]
    \label{prop:pert:2}
If $\phi\in\cS\cA_{\delta_T,\varpi}$ is a stable almost flow (see Definition~\ref{def:stable_almost_flow}) and $\epsilon$ is a Lipschitz perturbation, then $\psi\eqdef \phi+\epsilon$ is also a stable almost flow.
\end{proposition}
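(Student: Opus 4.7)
The plan is to verify the three ingredients of Definition~\ref{def:stable_almost_flow} for $\psi = \phi + \epsilon$, using Proposition~\ref{prop:pert:1} to handle the basic almost-flow structure and reducing everything else to the stability of $\phi$ together with the Lipschitz bound \eqref{eq:epsi_lip} on~$\epsilon$.

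First, Proposition~\ref{prop:pert:1} already tells us $\psi$ is an almost flow in the same galaxy as~$\phi$. The estimate \eqref{eq:h2} with $\gamma=1$ is preserved, because $\normlip{\psi_{t,s}} \leq \normlip{\phi_{t,s}} + \normlip{\epsilon_{t,s}} \leq 1+\delta_T + C\varpi(\omega_{s,t})$ and the extra term is bounded by $C\varpi(\omega_{0,T})$, which can be absorbed into a new $\delta_T'$ that still vanishes as $T\to 0$.

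Second, I would establish the $\varpi$-compatible 4-points control on $\psi$. A Lipschitz function $f$ satisfies the trivial 4-points inequality $|f(a)-f(b)-f(c)+f(d)| \leq 2\normlip{f}(|a-c|\vee|b-d|)$, so $\epsilon_{t,s}$ admits a 4-points control with $\widehat{\epsilon}_{t,s} \equiv 2\normlip{\epsilon_{t,s}} \leq 2C\varpi(\omega_{s,t})$ and $\widecheck{\epsilon}_{t,s}=0$. Lemma~\ref{lem:4pc:1}~\ref{lem:4pc:1:ii} then gives $\widehat{\psi}_{t,s} = \widehat{\phi}_{t,s}+\widehat{\epsilon}_{t,s}$ and $\widecheck{\psi}_{t,s} = \widecheck{\phi}_{t,s} \leq 1+\delta_T$, as required. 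The monotonicity $\widehat{\psi}_{s,t}\leq \widehat{\psi}_{u,v}$ is inherited from that of $\widehat{\phi}$ and the fact that $\varpi\circ\omega$ is monotone on nested intervals, and the $\varpi$-compatibility condition holds for $\widehat{\epsilon}$ with $\epsilon^\circledast(\alpha) = 2C$, so $\widehat{\psi}_{t,s}$ is $\varpi$-compatible with $\psi^\circledast = \phi^\circledast + 2C$.

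Third, and this is where the real work lies, I would verify \eqref{eq:prop_lip_t_s_r} for $\psi$. The idea is to expand
\begin{equation*}
\psi_{t,s,r} = \phi_{t,s,r} + \bigl(\phi_{t,s}\circ(\phi_{s,r}+\epsilon_{s,r}) - \phi_{t,s}\circ\phi_{s,r}\bigr) + \bigl(\epsilon_{t,s}\circ\psi_{s,r} - \epsilon_{t,r}\bigr).
\end{equation*}
The first summand is controlled by hypothesis. For the second, I would apply the composition inequality
\begin{equation*}
\normlip{\phi_{t,s}\circ g - \phi_{t,s}\circ h} \leq \widehat{\phi}_{t,s}(\normsup{g-h})\bigl(\normlip{g}\vee\normlip{h}\bigr) + \widecheck{\phi}_{t,s}\normlip{g-h}
\end{equation*}
recalled in the paper with $g=\phi_{s,r}+\epsilon_{s,r}$, $h=\phi_{s,r}$. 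Using \eqref{eq:epsilon:2}, the monotonicity of $\widehat{\phi}$ (which lets one replace $\widehat{\phi}_{t,s}$ by $\widehat{\phi}_{t,r}$) and $\varpi$-compatibility yields a bound of order $\varpi(\omega_{r,t})$, while the $\widecheck{\phi}_{t,s}\normlip{\epsilon_{s,r}}$ term is immediately $O(\varpi(\omega_{r,s}))$. For the third summand, $\normlip{\epsilon_{t,s}\circ\psi_{s,r}}\leq \normlip{\epsilon_{t,s}}\normlip{\psi_{s,r}} \leq C\varpi(\omega_{s,t})(1+\delta_T')$ and $\normlip{\epsilon_{t,r}}\leq C\varpi(\omega_{r,t})$, both $O(\varpi(\omega_{r,t}))$. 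Combining these gives $\normlip{\psi_{t,s,r}}\leq C'\varpi(\omega_{r,t})$ for a new constant $C'$, completing the proof.

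The main obstacle is the bookkeeping in the third step: one has to invoke the $\varpi$-compatibility and the monotonicity of $\widehat{\phi}$ simultaneously to convert a quantity of the form $\widehat{\phi}_{t,s}(C\varpi(\omega_{r,s}))$ into something controlled by $\varpi(\omega_{r,t})$; the decomposition of $\psi_{t,s,r}$ into a piece handled by the hypothesis on $\phi$ plus cross terms controlled by the Lipschitz size of $\epsilon$ is what makes this work.
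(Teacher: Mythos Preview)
Your proof is correct and follows essentially the same route as the paper: invoke Proposition~\ref{prop:pert:1} for the almost-flow structure, build the 4-points control on $\psi$ by adding the trivial 4-points control of the Lipschitz map $\epsilon_{t,s}$ to that of $\phi_{t,s}$, and then decompose $\psi_{t,s,r}$ into $\phi_{t,s,r}$, the cross term $\phi_{t,s}\circ(\phi_{s,r}+\epsilon_{s,r})-\phi_{t,s}\circ\phi_{s,r}$ handled via the $\varpi$-compatible 4-points control, and an $\epsilon$-remainder. The paper splits your third summand $\epsilon_{t,s}\circ\psi_{s,r}-\epsilon_{t,r}$ further into $\epsilon_{t,s}\circ\psi_{s,r}-\epsilon_{t,s}\circ\epsilon_{s,r}$ plus $\epsilon_{t,s,r}$, but your direct estimate $\normlip{\epsilon_{t,s}\circ\psi_{s,r}}+\normlip{\epsilon_{t,r}}$ is cleaner and gives the same bound; you are also more explicit than the paper about preserving $\gamma=1$ and about the $\varpi$-compatibility of $\widehat{\psi}$.
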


\begin{proof}
It is proved in Proposition \ref{prop:pert:1} that $\phi+\epsilon$ is an almost flow. Here we show that $\phi+\epsilon$ is a stable almost flow.

First, for any $a,b,c,d\in V$,
\begin{equation*}
|\epsilon_{t,s}(a)-\epsilon_{t,s}(b)-\epsilon_{t,s}(c)+\epsilon_{t,s}(d)|\leq 2C \varpi(\omega_{s,t})|a-c|\vee|b-d|,
\end{equation*}
so that $\epsilon_{t,s}$ satisfies a $\varpi$-compatible $4$-points
control (see Definition~\ref{def:varpi-compatible}) with $\widehat{\epsilon}_{t,s}\eqdef 2C \varpi(\omega_{s,t})$ and $\oldwidecheck{\epsilon}_{t,s}\eqdef 0$. Thus, $\phi+\epsilon$ satisfies a $\varpi$-compatible
$4$-points control with $\widehat{\phi+\epsilon}=\widehat{\phi}+2C\varpi(\omega_{s,t})$ and $\oldwidecheck{\phi_{t,s}+\epsilon_{t,s}}=\oldwidecheck{\phi_{t,s}}\leq 1+\delta_T$.

It remains to show that for any $(r,s,t)\in \rTT^3$,
$\normlip{\psi_{t,s,r}}\leq C\varpi(\omega_{r,t}),$
with $\psi_{t,s,r}\eqdef \psi_{t,s}\circ\psi_{s,r}-\psi_{t,r}$.
For any $a\in V$, we write
\begin{align*}
\psi_{t,s,r}(a)&=\underbrace{\left [\phi_{t,s}\circ (\phi_{s,r}+\epsilon_{s,r})(a)-\phi_{t,s}\circ\phi_{s,r}(a)\right]}_{\run_{r,s,t}(a)}
+\underbrace{\left[\epsilon_{t,s}\circ(\phi_{s,r}+\epsilon_{s,r})(a)-\epsilon_{t,s}\circ\epsilon_{s,r}(a)\right]}_{\rdeux_{r,s,t}(a)}\\
&+\underbrace{\phi_{t,s,r}(a)+\epsilon_{t,s,r}(a)}_{\rtrois_{r,s,t}(a)}.
\end{align*}
On the one hand, using the $\varpi$-compatible $4$-points control of
$\phi_{t,s}$, \eqref{eq:h2:gamma1},  (\ref{eq:epsilon:2}) and (\ref{eq:epsi_lip}) we write,
\begin{align*}
\normlip{\run_{r,s,t}}&\leq \widehat{\phi_{t,s}}(\normsup{\epsilon_{s,r}})(\normlip{\phi_{s,r}}+\normlip{\epsilon_{s,r}})+\oldwidecheck{\phi_{t,s}}\normlip{\epsilon_{s,r}}\\
&\leq \widehat{\phi_{t,s}}(C\varpi(\omega_{r,t}))(1+\delta_T+C\varpi(\omega_{0,T}))+(1+\delta_T)C\varpi(\omega_{r,t})\\
&\leq (C'(1+\delta_T+C\varpi(\omega_{0,T}))+1+\delta_T)\varpi(\omega_{r,t}),
\end{align*}
where $C'$ is a constant.

On the other hand, with \eqref{eq:h2}, \eqref{eq:epsi_lip}
\begin{multline*}
\normlip{\rdeux_{r,s,t}}\leq \normlip{\epsilon_{t,s}}(\normlip{\phi_{s,r}}+\normlip{\epsilon_{s,r}})+\normlip{\epsilon_{t,s}}\normlip{\epsilon_{s,r}}\\
\leq C\varpi(\omega_{r,t})(1+\delta_T+C\varpi(\omega_{0,T}))+C^2\varpi(\omega_{0,T})\varpi(\omega_{r,t})
\leq K_T \varpi(\omega_{r,t}), 
\end{multline*}
where $K_T\rightarrow 1$ when $T\rightarrow 0$.

Finally, with \eqref{eq:prop_lip_t_s_r}  and \eqref{eq:epsi_lip},
\begin{multline*}
\normlip{\rtrois_{r,s,t}}\leq \normlip{\phi_{t,s,r}}+\normlip{\epsilon_{t,r}}+\normlip{\epsilon_{t,s}}\normlip{\epsilon_{s,r}}
\leq (2C+C^2\varpi(\omega_{0,T}))\varpi(\omega_{r,t}).
\end{multline*}
This concludes the proof.
\end{proof}

Now, we prove another perturbation formula which is useful in Subsection~\ref{subsec:friz_victoir}.

We recall the $\delta_T, \eta$ and $\gamma$ are defined in Section~\ref{sec:controls}.

\begin{proposition}
\label{prop:perturbation_flow}
Let $\psi$ be a flow which may be decomposed as 
\begin{equation*}
    \psi_{t,s}(a)=\phi_{t,s}(a)+\epsilon_{s,t}(a),\ a\in\uV,\ (s,t)\in\TT^2
\end{equation*}
with for any $(s,t)\in\TT^2_+$ and $a,b\in\uV$, 
\begin{gather}
    \label{eq:p:1}
    \phi_{t,t}=\id,\ \epsilon_{t,t}=0,\\
    \label{eq:p:2}
    \abs{\phi_{t,s}(a)-\phi_{t,s}(b)}\leq (1+\delta_T)\abs{a-b}
    +\eta(\omega_{s,t})\abs{a-b}^\gamma,\ (a,b)\in\uV,\\
    \label{eq:p:3}
    \normsup{\phi_{t,s}-\id}\leq \delta_T,\\
    \label{eq:p:4}
    \normsup{\epsilon_{s,t}}\leq M\varpi(\omega_{s,t}).
\end{gather}
Then $\phi$ is an almost flow in the same galaxy as $\psi$. Besides, 
for any partition~$\pi$ of~$\TT$, 
\begin{equation}
    \label{eq:p:55}
    \normsup{\phi^\pi_{t,s}-\phi_{t,s}}\leq L\varpi(\omega_{s,t})
\end{equation}
where $L\leq 2\left[(3+\delta_T)M+\delta_T M^\gamma\right]/(1-(1+\delta_T)\kappa-\delta_T)$.
\end{proposition}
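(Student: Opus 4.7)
The plan is to verify that $\phi$ is an almost flow in the galaxy of $\psi$, so that Theorem~\ref{thm:1} (the sewing lemma for almost flows) can be applied directly; the only work is to extract the correct constant $M$ from the decomposition $\phi = \psi - \epsilon$.

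First, I would check the axioms of Definition~\ref{def:almost_flow} for $\phi$. Properties \eqref{eq:h0}, \eqref{eq:h1}, and \eqref{eq:h2} are immediate from \eqref{eq:p:1}, \eqref{eq:p:3}, \eqref{eq:p:2} respectively. The heart of the matter is \eqref{eq:h3}. Since $\psi$ is a genuine flow, $\psi_{t,s}\circ\psi_{s,r}=\psi_{t,r}$, and I can rewrite, for $a\in\uV$,
\begin{align*}
    \phi_{t,s,r}(a)
    &=\phi_{t,s}(\phi_{s,r}(a))-\phi_{t,r}(a)\\
    &=\phi_{t,s}(\psi_{s,r}(a)-\epsilon_{r,s}(a))-\phi_{t,s}(\psi_{s,r}(a))-\epsilon_{s,t}(\psi_{s,r}(a))+\epsilon_{r,t}(a),
\end{align*}
after replacing $\phi_{t,s}(\psi_{s,r}(a))-\psi_{t,s}(\psi_{s,r}(a))$ with $-\epsilon_{s,t}(\psi_{s,r}(a))$ and $\phi_{t,r}(a)$ with $\psi_{t,r}(a)-\epsilon_{r,t}(a)$. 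I would then bound the first difference using \eqref{eq:p:2} with increment $|\epsilon_{r,s}(a)|\leq M\varpi(\omega_{r,s})$, giving $(1+\delta_T)M\varpi(\omega_{r,t})+\eta(\omega_{s,t})M^\gamma\varpi(\omega_{r,s})^\gamma$; the $\eta$-term is tamed by \eqref{eq:h_eta} together with monotonicity of $\eta,\varpi$ and $\omega_{s,t}\leq\omega_{r,t}$, $\omega_{r,s}\leq\omega_{r,t}$, yielding $\delta_T M^\gamma\varpi(\omega_{r,t})$. The remaining two terms are each bounded by $M\varpi(\omega_{r,t})$ using \eqref{eq:p:4}. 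Summing,
\begin{equation*}
    \normsup{\phi_{t,s,r}}\leq \bigl[(3+\delta_T)M+\delta_T M^\gamma\bigr]\varpi(\omega_{r,t}),
\end{equation*}
which is \eqref{eq:h3} with constant $M'\eqdef (3+\delta_T)M+\delta_T M^\gamma$. So $\phi$ is an almost flow.

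Next, $\phi\sim\psi$ follows trivially from $\normsup{\phi_{t,s}-\psi_{t,s}}=\normsup{\epsilon_{s,t}}\leq M\varpi(\omega_{s,t})$, i.e.\ $\normf{\varpi}{\phi-\psi}\leq M<\infty$ (Definition~\ref{def_galaxy}).

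Finally, Theorem~\ref{thm:1} applied to the almost flow $\phi$ with constant $M'$ yields, for $T$ small enough,
\begin{equation*}
    \normsup{\phi^\pi_{t,s}-\phi_{t,s}}\leq L\varpi(\omega_{s,t}),\qquad L\leq \frac{2M'}{1-(1+\delta_T)\varkappa-\delta_T}=\frac{2\bigl[(3+\delta_T)M+\delta_T M^\gamma\bigr]}{1-(1+\delta_T)\varkappa-\delta_T},
\end{equation*}
which is \eqref{eq:p:55}. The only real obstacle is keeping the bookkeeping straight in the expansion of $\phi_{t,s,r}$ so that the Hölder term with exponent $\gamma$ is absorbed into $\delta_T\varpi(\omega_{r,t})$ via \eqref{eq:h_eta}; everything else is just substitution and an appeal to the already-proved sewing lemma.
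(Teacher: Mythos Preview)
Your proof is correct and follows essentially the same route as the paper: you expand $\phi_{t,s,r}$ using the flow property of $\psi$ and the decomposition $\phi=\psi-\epsilon$, bound the $\phi_{t,s}$-difference via \eqref{eq:p:2} with the $\eta$-term absorbed through \eqref{eq:h_eta}, and then invoke Theorem~\ref{thm:1}. The only cosmetic difference is that the paper centers the expansion at $\phi_{s,r}(a)$ rather than $\psi_{s,r}(a)$, which yields the same three terms and the same constant $(3+\delta_T)M+\delta_T M^\gamma$.
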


\begin{proof} 
    To show that $\phi$ is an almost flow, 
    it is sufficient to consider \eqref{eq:p:1}-\eqref{eq:p:4} 
    as well as controlling $\phi_{t,s,r}$. For $(r,s,t)\in\rTT^3$, 
    \begin{equation*}
	\psi_{t,s}\circ\psi_{s,r}(a)
	=
	\overbrace{\phi_{t,s}(\phi_{s,r}(a)+\epsilon_{s,r}(a))-\phi_{t,s}(\phi_{s,r}(a))}^{\run_{r,s,t}}
	+\phi_{t,s}(\phi_{s,r}(a))
	+\epsilon_{t,s}(\psi_{s,r}(a)).
    \end{equation*}
    Since $\psi$ is a flow, $\psi_{r,s,t}=0$ and then 
    \begin{equation*}
	\phi_{t,s,r}(a)=\run_{r,s,t}+\epsilon_{t,s}(\psi_{s,r}(a))-\epsilon_{t,r}(a).
    \end{equation*}
    With \eqref{eq:p:2},
    \begin{equation*}
	\abs{\run_{r,s,t}}\leq (1+\delta_T)M\varpi(\omega_{r,s})
	+\eta(\omega_{s,t})M^\gamma\varpi(\omega_{r,s})^\gamma.
    \end{equation*}
    With (\ref{eq:h_eta}), 
    \begin{equation*}
	\abs{\run_{r,s,t}}\leq  A \varpi(\omega_{r,t})
	\text{ with }A\eqdef (1+\delta_T)M+\delta_T M^\gamma.
    \end{equation*}
    It follows that $\normsup{\phi_{t,s,r}}\leq (2M+A)\varpi(\omega_{r,t})$.
    This proves that $\phi$ is an almost flow following Definition~\ref{sec:almost_flow}. The control \eqref{eq:p:55} follows from Theorem~\ref{thm:1}.
\end{proof}

\begin{corollary} 
    \label{cor:stability}
    Assume that $\uV$ is a finite-dimensional Banach space.
    Let $\Set{\psi^m}_{m\in\NN}$ be a family of flows 
    with decomposition $\psi^m=\phi^m+\epsilon^m$ where $(\phi^m,\epsilon^m)$
    satisfy \eqref{eq:p:1}-\eqref{eq:p:4} uniformly in $m$. Assume
    moreover that 
    \begin{equation}
	\label{eq:p:5}
	\normsup{\phi^m_{s,t}-\id}\leq \delta_{t-s},\ \forall (s,t)\in\rTT^2.
    \end{equation}
    Then any possible limit $\phi$ of $\phi^m_{t,s}$ (at least one exists) 
    satisfies \eqref{eq:p:1}-\eqref{eq:p:4}
    as well as~\eqref{eq:p:5} with the same constants.
\end{corollary}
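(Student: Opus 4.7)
My plan is a compactness/passage-to-the-limit argument: each of \eqref{eq:p:1}--\eqref{eq:p:5} is a closed condition, so once I have a subsequence along which $\phi^m$ (and the associated $\epsilon^m=\psi^m-\phi^m$) converges in a suitable sense, I can read off the same bounds for the limit.

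To extract the subsequence, I fix $(s,t)\in\rTT^2$ and observe that the family $\{\phi^m_{t,s}\}_m$ is equicontinuous on $\uV$ thanks to the modulus $(1+\delta_T)\abs{a-b}+\eta(\omega_{s,t})\abs{a-b}^\gamma$ coming from \eqref{eq:p:2}, and is uniformly bounded on any bounded set thanks to \eqref{eq:p:3}. Finite-dimensionality of $\uV$ lets me apply Arzelà-Ascoli on each closed ball, giving locally uniform relative compactness of $\{\phi^m_{t,s}\}_m$ in $\cC(\uV,\uV)$. A Cantor diagonal extraction over a countable dense subset $Q\subset\rTT^2$ (and an exhausting sequence of balls in $\uV$) then produces a subsequence $(m_k)$ along which $\phi^{m_k}_{t,s}\to\phi_{t,s}$ locally uniformly for every $(s,t)\in Q$. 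To extend this convergence to every $(s,t)\in\rTT^2$, I use a uniform-in-$m$ modulus of continuity of $(s,t)\mapsto\phi^m_{t,s}$, which I deduce from \eqref{eq:p:3} and the flow identity $\psi^m_{t',s'}=\psi^m_{t',t}\circ\psi^m_{t,s}\circ\psi^m_{s,s'}$ applied to $\phi^m=\psi^m-\epsilon^m$. A parallel extraction for $\epsilon^m$, again diagonal over $Q$ and over a dense set of points of $\uV$, uses the pointwise bound \eqref{eq:p:4} together with the equicontinuity of $\epsilon^m=\psi^m-\phi^m$ inherited from the decomposition.

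The passage to the limit is then elementary: identities \eqref{eq:p:1} hold in the limit by direct substitution, while the right-hand sides in \eqref{eq:p:2}--\eqref{eq:p:5} depend only on the $m$-independent data $\delta_T,\eta,\gamma,\varpi,M$, so each inequality survives taking $k\to\infty$ with the same constants.

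The main obstacle is the extraction step for $\epsilon^m$: the hypotheses only supply a sup-norm bound, no modulus of continuity, so Arzelà-Ascoli does not apply to $\epsilon^m$ directly. Overcoming this requires using the flow identity $\psi^m=\phi^m+\epsilon^m$ with $\psi^m$ a genuine flow in the sense of Definition~\ref{def:flow}: combining equi-Lipschitz/Hölder regularity of $\phi^m$ (from \eqref{eq:p:2}) with the continuity modulus of $\psi^m$ (inherent to the flows constructed in this paper's framework) yields equicontinuity of $\epsilon^m$ and restores compactness. Once this technical point is handled, the rest of the argument is routine.
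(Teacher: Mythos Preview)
Your compactness strategy is essentially the paper's: spatial equicontinuity from \eqref{eq:p:2}, local boundedness from \eqref{eq:p:3}, Arzelà--Ascoli in finite dimension, then passage to the limit through the closed inequalities. Two points, however.

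First, the ``main obstacle'' you single out---extracting a convergent subsequence of $\epsilon^m$---is not needed, and the paper does not attempt it. The corollary concerns only limits of $\phi^m$; conditions \eqref{eq:p:2}, \eqref{eq:p:3}, \eqref{eq:p:5} and the $\phi$-half of \eqref{eq:p:1} involve $\phi$ alone and pass straight to the limit. The role of \eqref{eq:p:4} is only indirect: through the proof of Proposition~\ref{prop:perturbation_flow} it gives the uniform bound $\normsup{\phi^m_{t,s,r}}\leq (2M+A)\varpi(\omega_{r,t})$, and \emph{that} inequality (on $\phi^m$ only) survives the limit. So the whole $\epsilon^m$ discussion, including the appeal to regularity of $\psi^m$, can be dropped.

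Second, for equicontinuity in the time variables the paper invokes \eqref{eq:p:5} together with Lemma~1 of \cite{brault1}, which furnishes a modulus in $t-s$ directly. Your route via the flow identity is in the same spirit, but note that it too must ultimately rest on \eqref{eq:p:5}: hypothesis \eqref{eq:p:3} alone gives only a bound uniform in $(s,t)$, not a modulus that vanishes as $t'\to t$ and $s'\to s$.
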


\begin{proof} 
    With \eqref{eq:p:5}, Lemma~1 in \cite{brault1} can be applied
    uniformly. As \eqref{eq:p:2} is also uniform in $m$, this proves
    that for any $R>0$, $\Set{\phi^m(a)_{s,t}}_{(s,t)\in\rTT^2,\ a\in \overline{B(0,R)}}$ is equi-continuous
    where $\overline{B(0,R)}$ is the closed ball of center $0$ and some radius $R>0$.
    The Ascoli-Arzelà shows that at least one limit of $\phi^m$ exists. Clearly, 
    this limit satisfies the same properties as $\phi^m$.
\end{proof}


\subsection{Spatial perturbations}

We consider now another kind of perturbation.

Another kind of perturbation consists in, given an almost flow $\phi$, 

\begin{proposition}
    Let $\phi$ be an almost flow and $\epsilon$ be a perturbation satisfying Definition~\ref{def:perturb}.
    With $\psi_{t,s}:=\phi_{t,s}\circ(\id+\epsilon_{t,s})$ for any $(s,t)\in\rTT^2$, 
    $\psi$ is an almost flow which lives in the same galaxy as $\phi$.
\end{proposition}

\begin{proof}
    Clearly, $\psi_{t,t}=0$ since $\epsilon_{t,t}=0$. Thus, $\psi$ satisfies~\eqref{eq:h0}.
    With \eqref{eq:epsilon:2}, 
    \begin{equation*}
	\normsup{\psi_{t,s}-\id}
	\leq \normsup{\phi_{t,s}\circ(\id+\epsilon_{t,s})-\epsilon_{t,s}}
	+\normsup{\epsilon_{t,s}}
	\leq \delta_T+C\varpi(\omega_{0,T}).
    \end{equation*}
    Hence, $\psi$ satisfies \eqref{eq:h1}. 

    For any $a\in\uV$ and $(s,t)\in\rTT^2$, with \eqref{eq:h2},
    \eqref{eq:epsilon:2} and \eqref{eq:h_eta}, 
    \begin{equation}
	\label{eq:18}
	\abs{\psi_{t,s}(a)-\phi_{t,s}(a)}
	\leq 
	\abs{\epsilon_{t,s}(a)}+\eta(\omega_{t,s})\abs{\epsilon_{t,s}(a)}^\gamma
	\leq 
	(C+\delta_T)\varpi(\omega_{t,s}).
    \end{equation}

    Using \eqref{eq:h1} and \eqref{eq:epsilon:3}, for any $a,b\in\uV$,
    \begin{multline*}
	\abs{\psi_{t,s}(b)-\psi_{t,s}(a)}
	\leq
	\abs{\psi_{t,s}(b)-\phi_{t,s}(b)}
	+\abs{\phi_{t,s}(b)-\phi_{t,s}(a)}
	+\abs{\phi_{t,s}(a)-\psi_{t,s}(a)}
	\\
	\leq \abs{\epsilon_{t,s}(b)-\epsilon_{t,s}(a)}+(1+\delta_T)\abs{b-a}
	+\eta(\omega_{t,s})\abs{b-a}^\gamma.
    \end{multline*}
    With \eqref{eq:epsilon:3}, \eqref{eq:h2} is satisfied.

    Finally, using 
    \eqref{eq:h1}, \eqref{eq:epsilon:2} and \eqref{eq:h_eta}, 
    for any $a\in\uV$,
    \begin{multline}
	\label{eq:19}
	\abs{\psi_{t,s}\circ \psi_{s,r}(a)-\psi_{t,r}(a)}
	\\
	\leq 
	\abs{\psi_{t,s}\circ \psi_{s,r}(a)-\psi_{t,s}\circ\phi_{s,r}(a)}
	+\abs{\psi_{t,s}\circ \phi_{s,r}(a)-\phi_{t,s}\circ\phi_{s,r}(a)}
	+\abs{\phi_{t,s,r}(a)}
	+\abs{\phi_{t,r}(a)-\psi_{t,r}(a)}.
    \end{multline}
    With \eqref{eq:h2}, \eqref{eq:18} and \eqref{eq:h_eta},
    it is easily obtained that $\psi$ satisfies \eqref{eq:h3}.

    With \eqref{eq:18}, $\psi$ and $\phi$ belong to the same galaxy.
\end{proof}
\begin{proposition}[Stability of stable almost flow under Lipschitz spatial perturbation]
    \label{prop:spatial:pert:2}
    If $\phi\in\cS\cA_{\delta_T,\varpi}$ is a stable almost flow (see Definition~\ref{def:stable_almost_flow}) and $\epsilon$ is a Lipschitz perturbation, then $\psi\eqdef \phi\circ(\id\circ\epsilon)$ is also a stable almost flow.
\end{proposition}

\begin{proof}
    Since $\phi_{t,s}$ is Lipschitz with $\normlip{\phi_{t,s}}\leq 1+\delta_T$,  
    \eqref{eq:epsilon:2} implies that 
    \begin{equation}
	\label{eq:18:stable}
	\normsup{\psi_{t,s}-\phi_{t,s}}
	\leq (1+\delta_T)C\varpi(\omega_{s,t})
	\text{ for any }(s,t)\in\rTT^2.
    \end{equation}
    
For any $(s,t)\in\rTT^2$ and $a,b\in\uV$, 
\begin{multline*}
    \abs{\psi_{t,s}(b)-\phi_{t,s}(b)-\psi_{t,s}(a)+\phi_{t,s}(a)}
    \\
    \leq 
    \widehat{\phi_{t,s}}(\abs{\epsilon_{t,s}(a)}\vee\abs{\epsilon_{t,s}(b)})
    \cdot \abs{b+\epsilon_{t,s}(b)-a-\epsilon_{t,s}(a)}\vee\abs{b-a}
    +\phi_{t,s}^\circledast\abs{b+\epsilon_{t,s}(b)-a-\epsilon_{t,s}(a)}.
\end{multline*}
Since $\phi$ is a stable almost flow, with \eqref{eq:epsilon:2}
and \eqref{eq:epsi_lip}, 
\begin{multline}
    \label{eq:19bis}
    \abs{\psi_{t,s}(b)-\phi_{t,s}(b)-\psi_{t,s}(a)+\phi_{t,s}(a)}
    \\
    \leq 
    \phi_{t,s}^{\circledast}(C)\varpi(\omega_{s,t})
    (1+\normlip{\epsilon_{t,s}})\abs{a-b}
    +\phi_{t,s}^\dagger C\normlip{\epsilon_{t,s}}\abs{a-b}
    \leq 
    K\abs{a-b}
\end{multline}
for a constant $K$ that depends on $\delta_T$ and $C$ (here, 
both a control on~$\normsup{\epsilon_{t,s}}$ and~$\normlip{\epsilon_{t,s}}$).

Using a computation similar to \eqref{eq:19}, we then easily 
obtain from \eqref{eq:18:stable} and  \eqref{eq:19bis}
that~$\psi$ satisfies a $\varpi$-compatible $4$-points control
and satisfies \eqref{eq:prop_lip_t_s_r}.
\end{proof}


\section{Inversion of the flow}

In this section, we prove that our definition of stable almost
(Definition~\ref{def:stable_almost_flow}) flow is stable with respect to inversion.

\begin{proposition}
\label{prop:inversion_flow}
Let $\phi\in\cS\cT_{\delta_T,\varpi}$ be a stable almost flow
and $\psi$ the unique flow in the same galaxy as $\phi$ (Corollary~\ref{prop:stable_almost_flow_unique_flow}).
We assume that $\chi\eqdef \phi-\id$ satisfies a $4$-point control such that
\begin{align*}
\forall (s,t)\in\rTT^2,~\oldwidecheck{\chi_{t,s}}=\oldwidecheck{\phi_{t,s}}-1,
~\widehat{\chi_{t,s}}=\widehat{\phi_{t,s}},~\mathrm{and}~\normlip{\chi_{t,s}}\leq \delta_T.
\end{align*}
Then, for $T$ such that $\delta_T<1$, $\phi$ is invertible and $(\zeta_{s,t})_{(s,t)\in\rTT^2}\eqdef(\phi^{-1}_{t,s})_{(s,t)\rTT^2}$ is a  stable reverse almost flow which galaxy contains a unique flow which equal to $\psi^{-1}$.
\end{proposition}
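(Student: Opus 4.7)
The plan is to first establish the pointwise invertibility of each $\phi_{t,s}$ via Lemma~\ref{lem:4pc:1}~\ref{lem:4pc:1:iv}, then verify that the family $\zeta$ satisfies the definition of a reverse stable almost flow, and finally identify the unique flow in its galaxy with $\psi^{-1}$. Since $\phi_{t,s} = \id + \chi_{t,s}$ with $\normlip{\chi_{t,s}} \leq \delta_T < 1$ for $T$ small, Lemma~\ref{lem:4pc:1}~\ref{lem:4pc:1:iv} yields the existence of $\zeta_{s,t} \eqdef \phi_{t,s}^{-1}$, a Lipschitz bound $\normlip{\zeta_{s,t}} \leq (1-\delta_T)^{-1}$, and a $4$-point control with $\widehat{\zeta}_{s,t}(x) = \widehat{\chi}_{t,s}(\normlip{\zeta_{s,t}}x)\normlip{\zeta_{s,t}}$ and $\widecheck{\zeta}_{s,t} = (1-\widecheck{\chi}_{t,s})^{-1} = (2-\widecheck{\phi}_{t,s})^{-1}$. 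For $T$ small, these bounds fit within a modified $\delta'_T$, and the $\varpi$-compatibility of $\widehat{\zeta}_{s,t}$ is inherited from that of $\widehat{\chi}_{t,s} = \widehat{\phi}_{t,s}$. From the identity $\zeta_{s,t}(a) - a = -\chi_{t,s}(\zeta_{s,t}(a))$ one reads off $\zeta_{t,t} = \id$ and $\normsup{\zeta_{s,t} - \id} \leq \normsup{\chi_{t,s}} \leq \delta_T$, so the analogues of \eqref{eq:h0}--\eqref{eq:h2} hold for $\zeta$ in reverse form.

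The core of the proof is the $3$-point estimate for $\zeta_{s,u,t} \eqdef \zeta_{s,u} \circ \zeta_{u,t} - \zeta_{s,t}$ with $s \leq u \leq t$. Setting $a' \eqdef \zeta_{s,u}(\zeta_{u,t}(a))$, the identity $\phi_{t,u} \circ \phi_{u,s}(a') = a$ combined with $\phi_{t,u} \circ \phi_{u,s} = \phi_{t,s} + \phi_{t,u,s}$ gives $\phi_{t,s}(a') = a - \phi_{t,u,s}(a')$, hence
\begin{equation*}
\zeta_{s,u,t}(a) = \zeta_{s,t}\bigl(a - \phi_{t,u,s}(a')\bigr) - \zeta_{s,t}(a).
\end{equation*}
The Lipschitz bound on $\zeta_{s,t}$ together with $\normsup{\phi_{t,u,s}} \leq M\varpi(\omega_{s,t})$ from \eqref{eq:h3} immediately yields $\normsup{\zeta_{s,u,t}} \leq (1+\delta'_T)M\varpi(\omega_{s,t})$, establishing \eqref{eq:h3} for $\zeta$. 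For the Lipschitz estimate \eqref{eq:prop_lip_t_s_r}, I would expand $\zeta_{s,u,t}(a) - \zeta_{s,u,t}(b)$ and apply the $4$-point control on $\zeta_{s,t}$ to the quadruple $(a - \phi_{t,u,s}(a'),\, a,\, b - \phi_{t,u,s}(b'),\, b)$; using $\abs{a' - b'} \leq (1+\delta'_T)^2 \abs{a-b}$, the stability bound \eqref{eq:prop_lip_t_s_r} for $\phi$ to control $\normlip{\phi_{t,u,s}} \leq C\varpi(\omega_{s,t})$, and the $\varpi$-compatibility of $\widehat{\zeta}$ to handle the term $\widehat{\zeta}_{s,t}(M\varpi(\omega_{s,t}))$, each of the three resulting contributions carries a factor $\varpi(\omega_{s,t})\abs{a-b}$. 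This bookkeeping, in particular checking that the $\varpi(\omega_{s,t})$ factor survives in the cross term controlled by $\widecheck{\zeta}_{s,t}$, is the main technical obstacle.

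Once $\zeta$ is shown to be a reverse stable almost flow, the reverse counterpart of Corollary~\ref{prop:stable_almost_flow_unique_flow} produces a unique Lipschitz flow $\widetilde{\psi}$ in the galaxy of $\zeta$. To identify $\widetilde{\psi}$ with $\psi^{-1}$, it suffices to show $\psi^{-1} \sim \zeta$: writing $b = \zeta_{s,t}(a)$, one has
\begin{equation*}
\abs{\psi^{-1}_{s,t}(a) - \zeta_{s,t}(a)} = \abs{\psi^{-1}_{s,t}(\phi_{t,s}(b)) - \psi^{-1}_{s,t}(\psi_{t,s}(b))} \leq \normlip{\psi^{-1}_{s,t}}\,\normsup{\psi_{t,s} - \phi_{t,s}},
\end{equation*}
and the right-hand side is bounded by $L\varpi(\omega_{s,t})$ via Corollary~\ref{prop:stable_almost_flow_unique_flow}. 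Hence $\psi^{-1} \sim \zeta$, and Proposition~\ref{prop:unique_flow} applied to this galaxy forces $\widetilde{\psi} = \psi^{-1}$, completing the proof.
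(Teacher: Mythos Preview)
Your argument is correct and matches the paper's proof in all essential respects. The invertibility and $4$-points control for $\zeta$ come from Lemma~\ref{lem:4pc:1}~\ref{lem:4pc:1:iv} exactly as you describe, and your quadruple $(a-\phi_{t,u,s}(a'),\,a,\,b-\phi_{t,u,s}(b'),\,b)$ is precisely the paper's quadruple $(\phi_{t,s}(b'),\,\phi_{t,u}\circ\phi_{u,s}(b'),\,\phi_{t,s}(a'),\,\phi_{t,u}\circ\phi_{u,s}(a'))$ rewritten using the identity $\phi_{t,s}(a')=a-\phi_{t,u,s}(a')$ and with $a,b$ swapped; the resulting bounds are identical.

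The only genuine difference is in the final identification of the limit flow. The paper observes that $\zeta_{s,t}^\pi=(\phi_{t,s}^\pi)^{-1}$ by telescoping, and then passes to the limit in $\zeta_{s,t}^\pi\circ\phi_{t,s}^\pi=\id$ to get $\widetilde{\psi}_{s,t}\circ\psi_{t,s}=\id$. Your route instead shows directly that $\psi^{-1}\sim\zeta$ via a one-line Lipschitz estimate, then invokes uniqueness. Your argument is slightly cleaner but tacitly uses that $\psi_{t,s}$ is invertible with Lipschitz inverse; this follows from \eqref{eq:psi_phi_lip} and $\normlip{\chi_{t,s}}\leq\delta_T$, which together give $\normlip{\psi_{t,s}-\id}\leq\delta_T+L\varpi(\omega_{0,T})<1$ for $T$ small, so the Lipschitz inverse function theorem applies. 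With that one line added, both approaches are complete and equivalent.
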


\begin{proof}
According to item~\ref{lem:4pc:1:iv} of Lemma~\ref{lem:4pc:1} and because $\normlip{\chi_{t,s}}\leq \delta_T$ we know that for $T>0$ such that $\delta_T<1$, $\phi_{t,s}$ is invertible and that $\phi_{t,s}^{-1}$ satisfies a $4$-points control with
$\oldwidecheck{(\phi_{t,s}^{-1})}=1/(1-\oldwidecheck{\chi_{t,s}})$ and 
$\widehat{\phi_{t,s}^{-1}}(x)=\widehat{\chi_{t,s}}(\normlip{\phi_{t,s}}x)\normlip{\phi_{t,s}}$ for any $x\in\RR_+$. It follows that
$\oldwidecheck{(\phi_{t,s}^{-1})}\leq 1+\delta'_T$, with $\delta'_T\eqdef \delta_T/(1-\delta_T)$ and that $\phi_{t,s}$ satisfies a $\varpi$-compatible $4$-points control.

Moreover, $\normlip{\phi_{t,s}^{-1}}\leq 1/(1-\normlip{\chi_{t,s}})$ and
we assume $\normlip{\chi_{t,s}}\leq \delta_T$. It follows that
$\normlip{\phi_{t,s}^{-1}}\leq 1+\delta_T'$ which proves that \eqref{eq:h2} holds for $\phi^{-1}$. In substituting $a$ by $\phi^{-1}_{t,s}(a)$ in \eqref{eq:h1} we show that \eqref{eq:h1} holds for $\phi^{-1}$.

To prove that $(\zeta_{s,t})_{(s,t)\in\rTT^2}\eqdef (\phi^{-1}_{t,s})_{(s,t)\in\rTT^2}$ is a reverse stable almost flow, it remains to show that the conditions \eqref{eq:h3} and \eqref{eq:prop_lip_t_s_r} hold for any $(r,s,t)\in\rTT^3$.
Firstly, we compute with \eqref{eq:h3}, since $\phi_{t,s}\circ\phi_{s,r}$ is one-to-one,  
\begin{multline*}
    \normsup{\phi_{s,r}^{-1}\circ\phi_{t,s}^{-1}\circ\phi_{t,s}\circ\phi_{s,r}-\phi^{-1}_{t,r}\circ\phi_{t,s}
	\circ\phi_{s,r}}=\normsup{\phi_{t,r}^{-1}\circ\phi_{t,r}-\phi_{t,r}^{-1}\circ\phi_{t,s}
    \circ\phi_{s,r}}\\
    \leq (1+\delta_T')\normsup{\phi_{t,r}-\phi_{t,s}\circ\phi_{s,r}}
\leq 
M\varpi(\omega_{r,t}),
\end{multline*}
which yields with to $\normsup{\zeta_{r,s}\circ\zeta_{s,t}-\zeta_{r,t}}\leq M\varpi(\omega_{r,t})$.

Secondly, for any $a,b\in\uV$ and $(r,s,t)\in\rTT^3$, we set $a'\eqdef \phi_{s,r}^{-1}\circ\phi_{t,s}^{-1}(a)$, $b'\eqdef \phi_{s,r}^{-1}\circ\phi_{t,s}^{-1}(b)$, and
\begin{align*}
\Phi_{r,s,t}&\eqdef (\phi_{s,r}^{-1}\circ\phi_{t,s}^{-1}-\phi_{t,r}^{-1})\circ\phi_{t,s}\circ\phi_{s,r}(b')-(\phi_{s,r}^{-1}\circ\phi_{t,s}^{-1}-\phi_{t,r}^{-1})\circ\phi_{t,s}\circ\phi_{s,r}(a')\\
&=\phi_{t,r}^{-1}\circ\phi_{t,r}(b')-\phi_{t,r}^{-1}\circ\phi_{t,s}\circ\phi_{s,r}(b')-\phi_{t,r}^{-1}\circ\phi_{t,r}(a')+\phi_{t,r}^{-1}\circ\phi_{t,s}\circ\phi_{s,r}(a').
\end{align*}
We know that $\phi^{-1}_{t,r}$ satisfies a $\varpi$-compatible
$4$-points control and we use \eqref{eq:prop_lip_t_s_r}, 
\begin{align*}
\abs{\Phi_{r,s,t}}&\leq \widehat{\phi^{-1}_{t,r}}(\normsup{\phi_{t,s,r}})
\left[\normlip{\phi_{t,r}}\vee\normlip{\phi_{t,s}\circ\phi_{s,r}}\right]|b'-a'|
+\oldwidecheck{(\phi_{t,r}^{-1})}\normlip{\phi_{t,s,r}}|b'-a'|\\
&\leq \phi^{-1, \circledast}(M)\varpi(\omega_{r,t})(1+\delta_T)^2\abs{b'-a'}+(1+\delta_T)C\varpi(\omega_{r,t})\abs{b'-a'}.
\end{align*}
Then substituting $a'$ and $b'$ by  $\phi_{s,r}^{-1}\circ\phi_{t,s}^{-1}(a)$
and $\phi_{s,r}^{-1}\circ\phi_{t,s}^{-1}(a)$, 
\begin{multline*}
\normlip{\phi_{s,r}^{-1}\circ\phi_{t,s}^{-1}-\phi_{t,r}^{-1}} 
\leq [\phi^{-1, \circledast}(M)\varpi(\omega_{r,t})(1+\delta_T)^2+(1+\delta_T)C\varpi(\omega_{r,t})]\normlip{\phi_{s,r}^{-1}\circ\phi_{t,s}^{-1}}\\
\leq
\left[\phi^{-1, \circledast}(M))(1+\delta_T)^2+(1+\delta_T)C\right](1+\delta'_T)^2\varpi(\omega_{r,t}).
\end{multline*}
Hence $\zeta$ is a stable reverse almost flow. According to Corollary~\ref{prop:stable_almost_flow_unique_flow}, $\zeta^\pi$ converges to a unique Lipschitz flow $\zeta^{\infty}$ in $\cF(\uV)$. But, $\zeta_{s,t}^\pi=(\phi^\pi_{t,s})^{-1}$, which yields to 
$\zeta_{s,t}^\pi\circ\phi^\pi_{t,s}=\id$ and passing to limit
$\zeta_{s,t}^\infty\circ\psi_{t,s}=\id$. This concludes the proof.
\end{proof}



\section{Generalized solution to rough differential equations}
\label{sec:generalized_solution_rde}
Almost flows approximate of flows, similarly to numerical algorithms.
In classical analysis, flows are strongly related to solutions of ordinary differential
equations (ODE). Rough differential equations (RDE) were solved first using 
fixed point theorems  on paths \cite{lyons98a}. The technical difficulty with this 
approach is that the solution itself should be a rough path. 

Later, A.M. Davie introduced in \cite{davie05a} another notion of solution of RDE
which no longer involves solutions as rough paths, but only as paths. 
We abstract here this approach in order to relate almost flows and paths.

\begin{definition}[Generalized solution in the sense of Davie]
\label{def:solution_davie}
    Let $\phi$ be an almost flow. 
    Let $a\in\uV$ and $r\in\TT$.  A $\uV$-valued path $\Set{y_{r \curvearrowright t}}_{(r,t)\in\TT^2}$
    is said to be a \emph{solution in the sense of Davie} of $dy=\phi_{\vd t}(y)$ with the initial condition $a$ at time $r$ 
    if $y_{r\curvearrowright r}=a$ and there exists a constant $C$ such that 
    \begin{equation}
	\label{eq:davie:5}
	\abs{y_{r\curvearrowright t}-\phi_{t,s}(y_{r\curvearrowright s})}
	    \leq C\varpi(\omega_{s,t}),\ \forall r\leq s\leq t\leq T.
    \end{equation}
\end{definition}

\begin{definition}[Manifold of solutions]
    A family $\Set{y_{r\curvearrowright \cdot}(a)}_{r\in\TT,\ a\in\uV}$ of solutions
    satisfying \eqref{eq:davie:5} and $y_{r\curvearrowright r}(a)=a$ is called
    a \emph{manifold of solutions}. We write $\vd y=\phi_{\vd t}(y)$ to denote the whole
    family of solutions.
\end{definition}

\begin{definition}[Lipschitz manifold of solutions]
    If $a\mapsto y_{r\curvearrowright \cdot}(a)$ is uniformly 
    Lipschitz continuous from $(\uV,d)$ to $(\cC([r,T],\uV),\normsup{\cdot})$, 
    then we say that the manifold of solutions is \emph{Lipschitz}.
\end{definition}

\begin{remark} When $\phi_{t,s}=\id+\chi_{t,s}$, 
    then \eqref{eq:davie:5} may be written 
	$\abs{y_{s,t}-\chi_{t,s}(y_s)}\leq C\varpi(\omega_{s,t})$
    with $y_{s,t}\eqdef y_{r\curvearrowright t}-y_{r\curvearrowright s}$.
    for $(r,s,t)\in\TT^+_2$
    This is the form used by A.M. Davie in~\cite{davie05a}. 
\end{remark}

Flows and manifold of solutions are  closely related. 
Besides, a manifold of solutions is in relation with a whole galaxy.
The proof of the next lemma is immediate so  we skip it.

\begin{lemma}
    \label{lem:rde:1}
    A flow $\psi$ generates a manifold of solutions
    to  $\vd y=\psi_{\vd t}(y)$
    through $y_{r\curvearrowright t}(a)\eqdef \psi_{t,r}(a)$, 
    $(r,t)\in\TT^+_2$, $a\in\uV$.
    Besides, $y$ is also solution to   $\vd y=\phi_{\vd t}(y)$ 
    for any almost flow $\phi$ in the galaxy containing $\psi$.
\end{lemma}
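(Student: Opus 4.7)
The plan is to verify the two defining properties of a manifold of solutions (Definition~\ref{def:solution_davie}) directly from the flow identity, then use the galaxy relation to transfer the statement to any almost flow equivalent to $\psi$.

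First, I would set $y_{r\curvearrowright t}(a)\eqdef \psi_{t,r}(a)$ and check the initial condition: $y_{r\curvearrowright r}(a)=\psi_{r,r}(a)=a$ since $\psi_{r,r}=\id$ (Definition~\ref{def:flow}). Then, for any $(r,s,t)\in\rTT^3$, the flow property $\psi_{t,s}\circ\psi_{s,r}=\psi_{t,r}$ gives
\begin{equation*}
    \psi_{t,s}(y_{r\curvearrowright s}(a)) = \psi_{t,s}\circ\psi_{s,r}(a)=\psi_{t,r}(a)=y_{r\curvearrowright t}(a),
\end{equation*}
so the estimate \eqref{eq:davie:5} holds trivially with $C=0$. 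Since this is uniform in $a\in\uV$, $\{y_{r\curvearrowright\cdot}(a)\}_{r\in\TT,\,a\in\uV}$ is indeed a manifold of solutions to $\dot{y}=\psi'(y)$.

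For the second part, let $\phi$ be an almost flow in the galaxy of $\psi$. By Definition~\ref{def_galaxy}, there exists $C\geq 0$ such that $\normsup{\phi_{t,s}-\psi_{t,s}}\leq C\varpi(\omega_{s,t})$ for any $(s,t)\in\rTT^2$. Using the previous identity, for any $r\leq s\leq t\leq T$,
\begin{equation*}
    \abs{y_{r\curvearrowright t}(a)-\phi_{t,s}(y_{r\curvearrowright s}(a))}
    =\abs{\psi_{t,s}(y_{r\curvearrowright s}(a))-\phi_{t,s}(y_{r\curvearrowright s}(a))}
    \leq C\varpi(\omega_{s,t}),
\end{equation*}
which is exactly \eqref{eq:davie:5} for $\phi$. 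Hence $y$ is also a solution to $\dot{y}=\phi'(y)$.

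There is no real obstacle here: both claims reduce to directly unfolding the definitions, with the flow identity killing the left-hand side in the first part and the $\varpi$-closeness within a galaxy yielding the bound in the second. The only minor care is that the bound $C$ produced in the second part does not depend on $a$, which is immediate because the galaxy relation is formulated with the supremum norm $\normsup{\cdot}$.
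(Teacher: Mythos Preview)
Your proof is correct and is exactly the immediate verification the paper has in mind; indeed, the paper states that the proof is immediate and omits it entirely. Your unfolding of the flow identity to obtain \eqref{eq:davie:5} with $C=0$, followed by the use of the galaxy bound $\normsup{\phi_{t,s}-\psi_{t,s}}\leq C\varpi(\omega_{s,t})$ for the second claim, is precisely the intended argument.
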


\subsection{Existence of a flow from a family of solutions}

First,  we show how to construct a flow from a suitable
family of paths.

\begin{proposition}
    \label{prop:davie:1}
    Consider an almost flow $\phi$ and $T$ small enough.
    Assume that there exists a family $\Set{y_{0\curvearrowright t}(a)}_{t\in\TT,a\in\uV}$
    of $\uV$-valued paths, continuous in time and Lipschitz continuous in space such that  
    such that
    \begin{gather*}
	y_{0\curvearrowright 0}=\id,\ 
	\normsup{y_{0\curvearrowright t}-\phi_{t,s}(y_{0\curvearrowright s})}
	\leq C\varpi(\omega_{s,t}), \ \forall (s,t)\in\rTT^2,
	\\
	\sup_{t\in\TT} \{\normlip{y_{0\curvearrowright t}-\id}+
	\normsup{y_{0\curvearrowright t}-\id}\}\leq K_T
	\text{ where }K_T\xrightarrow[T\to 0]{} 0.
    \end{gather*}
    Then  $\Set{y_{0\curvearrowright t}(a)}_{t\in\TT,a\in\uV}$ 
    is a family of Lipschitz diffeomorphisms on $\uV$
    
    Moreover,
with $\psi_{t,s}(a)\eqdef y_{0\curvearrowright t}\circ y_{0\curvearrowright s}^{-1}(a)$
for $(s,t)\in\TT^2$ and $a\in\uV$, 
$\Set{\psi_{t,s}(a)}_{(s,t)\in\TT^2,a\in\uV}$ 
is also a family of Lipschitz diffeomorphisms and defines a flow in the same galaxy as $\phi$.
\end{proposition}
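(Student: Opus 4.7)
My plan is to exploit the near-identity condition on $a\mapsto y_{0\curvearrowright t}(a)$ to invert this map uniformly in $t$, transport the Davie estimate to arbitrary starting times by a change of variable, and then read off the flow $\psi$ as a composition of two bi-Lipschitz maps. Once the inversion is done, every claim of the proposition is essentially algebraic.

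First, I would apply the Lipschitz inverse function theorem (as used in the proof of Lemma~\ref{lem:4pc:1}\ref{lem:4pc:1:iv}). Since $\normlip{y_{0\curvearrowright t}-\id}\leq K_T\xrightarrow[T\to 0]{}0$, I may shrink $T$ so that $K_T<1$. Each $y_{0\curvearrowright t}$ is then a bi-Lipschitz homeomorphism of $\uV$ with inverse $z_t\eqdef y_{0\curvearrowright t}^{-1}$ satisfying $\normlip{z_t}\leq(1-K_T)^{-1}$ and $z_0=\id$.

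Second, I would extend the family by setting
\begin{equation*}
y_{r\curvearrowright t}(a)\eqdef y_{0\curvearrowright t}\bigl(z_r(a)\bigr),\quad 0\leq r\leq t\leq T,\ a\in\uV.
\end{equation*}
The initial condition $y_{r\curvearrowright r}(a)=a$ is immediate from $y_{0\curvearrowright r}\circ z_r=\id$. For the Davie-type estimate, the substitution $b\eqdef z_r(a)$ reduces the problem to the hypothesis:
\begin{equation*}
\abs{y_{r\curvearrowright t}(a)-\phi_{t,s}(y_{r\curvearrowright s}(a))}=\abs{y_{0\curvearrowright t}(b)-\phi_{t,s}(y_{0\curvearrowright s}(b))}\leq C\varpi(\omega_{s,t}),
\end{equation*}
uniformly in $a$. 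Hence the extended family is a manifold of solutions to $\dot y=\phi'(y)$.

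Finally, I would define $\psi_{s,t}(a)\eqdef y_{s\curvearrowright t}(a)=y_{0\curvearrowright t}(z_s(a))$, extended to arbitrary $(s,t)\in\TT^2$ by the same formula. The flow identity collapses from the cancellation $z_s\circ y_{0\curvearrowright s}=\id$, and invertibility with $\psi_{s,t}^{-1}=\psi_{t,s}$ is immediate because $\psi_{s,t}$ is a composition of two bijections with explicit inverses. The Lipschitz bound
\begin{equation*}
\normlip{\psi_{s,t}}\leq\normlip{y_{0\curvearrowright t}}\cdot\normlip{z_s}\leq\frac{1+K_T}{1-K_T}
\end{equation*}
can be rewritten as $1+\delta_T'$ with $\delta_T'\to 0$. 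The galaxy membership $\psi\sim\phi$ then follows by taking $r=s$ in the manifold estimate, which yields $\normsup{\psi_{s,t}-\phi_{t,s}}\leq C\varpi(\omega_{s,t})$. The only non-algebraic step is the Lipschitz inversion, which rests entirely on the smallness hypothesis $K_T\to 0$ and the associated choice of time horizon; everything else reduces to substitution and composition of bi-Lipschitz maps.
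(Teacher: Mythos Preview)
Your proposal is correct and follows essentially the same approach as the paper: invoke the Lipschitz inverse function theorem to invert $y_{0\curvearrowright t}$ when $K_T<1$, define $\psi_{t,s}\eqdef y_{0\curvearrowright t}\circ y_{0\curvearrowright s}^{-1}$, and verify flow, Lipschitz, and galaxy properties by substitution. You are slightly more explicit than the paper in spelling out the extension to a manifold of solutions via $y_{r\curvearrowright t}\eqdef y_{0\curvearrowright t}\circ z_r$ and checking the Davie estimate for general $r$, but this is the same idea carried out in full.
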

\begin{proof}
    The Lipschitz inverse mapping shows that $y_{0\curvearrowright t}$ is invertible with a
    Lipschitz continuous inverse $y^{-1}_{0\curvearrowright t}$ when $K_T<1$  (\cite{abraham} p.~124).
    
   Assuming that $T$ is small enough, 
 $\psi_{t,s}(a)\eqdef y_{0\curvearrowright t}\circ y_{0\curvearrowright s}^{-1}(a)$
    for any $(s,t)\in\TT^2$ and $a\in\uV$ defines an invertible flow.
   
    Besides, for any $(s,t)\in\TT^2_+$, $\psi_{s,t}$ is Lipschitz continuous 
    since both $y_{0\curvearrowright t}$ and $y_{0\curvearrowright s}^{-1}$ are
    Lipschitz continuous.

    It remains to prove that $\phi\sim\psi$. For $a\in\uV$, let us
    set $b\eqdef y_{0\curvearrowright s}^{-1}(a)$. Thus,
    \begin{multline*}
	\abs{\psi_{t,s}(a)-\phi_{t,s}(a)}
	=\abs{\psi_{t,s}(y_{0\curvearrowright s}(b))-\phi_{t,s}(y_{0\curvearrowright s}(b))}\\
	\leq
	\abs{\phi_{t,s}(y_{0\curvearrowright s}(b))-y_{0\curvearrowright t}(b)}
	\leq C\varpi(\omega_{s,t}). 
    \end{multline*}
    Thus, $\psi\sim\phi$.
\end{proof}

\subsection{Uniqueness and continuity of a solution in the sense of Davie}

A stable almost flow $\phi$ satisfies the condition UL (see Theorem~\ref{prop:stable_almost_flow_UL}), so 
that there exists a unique flow $\psi$ in the same galaxy as $\phi$. Furthermore, 
$\psi$ is Lipschitz. 

The flow $\psi$ generates a manifold of solutions. We show that
there exists only one such manifold with a Lipschitz continuity 
result. Note that in the following proposition, $\zeta$
is not assumed to be stable.

\begin{proposition}
    \label{prop:rde:1}
Let $\phi$ be a stable almost flow and $\zeta$ be an almost flow. 

Let $y$ and $z$ be two paths from $[0,T]$ to $\uV$ such that 
\begin{equation*}
    \abs{y_t-\phi_{t,s}(y_s)}\leq K\varpi(\omega_{s,t})
    \text{ and }
    \abs{z_t-\zeta_{t,s}(z_s)}\leq K\varpi(\omega_{s,t})
    ,\ \forall (s,t)\in\rTT^2.
\end{equation*}

Let us write $\alpha_{t,s}\eqdef \zeta_{t,s}-\phi_{t,s}$ 
and $\alpha_{t,s,r}\eqdef \zeta_{t,s,r}-\phi_{t,s,r}$. 
Let $\epsilon_1, \epsilon_2, \epsilon_3>0$ be such that 
for any $(r,s,t)\in\rTT^3$, 
\begin{equation*}
    \abs{\alpha_{t,s,r}(z_r)}\leq \epsilon_1\varpi(\omega_{r,t}),\ 
    \normlip{\alpha_{s,t}}\leq \epsilon_2
    \text{ and }
    \abs{\alpha_{s,t}(z_s)}\leq \epsilon_3.
\end{equation*}

Then there exists a time $T$ small enough and a constant $C$ that depends only on 
$\phi$, $K$, $T\mapsto \delta_T$, $\varkappa$ and $\sup_{(r,s,t)\in\TT^3}\normlip{\phi_{t,s,r}}/\varpi(\omega_{r,t})$ such that 
\begin{gather*}
    \abs{y_t-z_t}\leq C(\epsilon_1+\epsilon_2+\epsilon_3+\abs{y_0-z_0})\\
    \text{and }
    \abs{y_t-\phi_{t,s}(y_s)-z_t+\zeta_{t,s}(z_s)}
    \leq C(\epsilon_1+\epsilon_2+\epsilon_3+\abs{y_0-z_0})\varpi(\omega_{s,t})
\end{gather*}
for all $(s,t)\in\TT^2$.
\end{proposition}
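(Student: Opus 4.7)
The plan is to introduce the defect
\begin{equation*}
    r_{t,s} \eqdef y_t - z_t - \phi_{t,s}(y_s) + \zeta_{t,s}(z_s),
\end{equation*}
so that $r_{s,s}=0$ for every $s$, the first claim reduces to bounding $|y_t-z_t|$, and the second is exactly $|r_{t,s}|\leq C(\epsilon_1+\epsilon_2+\epsilon_3+|y_0-z_0|)\varpi(\omega_{s,t})$. The strategy is to derive a Davie-type sub-additive estimate for $r$ on $\rTT^3$ and then invoke the Davie lemma (Lemma~\ref{lem:davie}), exactly in the spirit of the proof of Theorem~\ref{prop:stable_almost_flow_UL}.

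First, for $(r,s,t)\in\rTT^3$ I would insert $\pm\phi_{t,s}\circ\phi_{s,r}(y_r)$ and $\pm\zeta_{t,s}\circ\zeta_{s,r}(z_r)$ in $r_{t,r}-r_{t,s}$ and substitute $\zeta=\phi+\alpha$ to reach
\begin{equation*}
    r_{t,r} - r_{t,s} = \phi_{t,s,r}(y_r) - \zeta_{t,s,r}(z_r) + Q_1 + Q_2,
\end{equation*}
where $Q_1$ is the $4$-point increment of $\phi_{t,s}$ at $(y_s,\phi_{s,r}(y_r),z_s,\zeta_{s,r}(z_r))$ and $Q_2=\alpha_{t,s}(\zeta_{s,r}(z_r))-\alpha_{t,s}(z_s)$. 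Using \eqref{eq:prop_lip_t_s_r} together with the hypotheses on $\alpha_{t,s,r}(z_r)$ and on $\normlip{\alpha_{t,s}}$, the first term and $Q_2$ are bounded by $(CD_T+\epsilon_1)\varpi(\omega_{r,t})$ and $\epsilon_2 K\varpi(\omega_{r,t})$, where $D_T\eqdef\sup_{u\in\TT}|y_u-z_u|$. For $Q_1$ I would apply the $\varpi$-compatible $4$-points control of $\phi$, using $\widecheck{\phi_{t,s}}\leq 1+\delta_T$, the identity $y_s-z_s-\phi_{s,r}(y_r)+\zeta_{s,r}(z_r)=r_{s,r}$, the bound $|\phi_{s,r}(y_r)-\zeta_{s,r}(z_r)|\leq(1+\delta_T)|y_r-z_r|+\epsilon_3$ coming from \eqref{eq:h2:gamma1} and $|\alpha_{s,r}(z_r)|\leq\epsilon_3$, and the chain $\widehat{\phi_{t,s}}(K\varpi(\omega_{r,s}))\leq\widehat{\phi_{t,r}}(K\varpi(\omega_{r,t}))\leq\phi^{\circledast}(K)\varpi(\omega_{r,t})$ stemming from the monotonicity clause of Definition~\ref{def:varpi-compatible} on the nested interval $[s,t]\subset[r,t]$. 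This yields
\begin{equation*}
    |Q_1|\leq\phi^{\circledast}(K)\varpi(\omega_{r,t})\bigl[(1+\delta_T)D_T+\epsilon_3\bigr]+(1+\delta_T)|r_{s,r}|.
\end{equation*}

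Collecting these bounds gives the Davie-type inequality
\begin{equation*}
    |r_{t,r}|\leq|r_{t,s}|+(1+\delta_T)|r_{s,r}|+\Lambda\varpi(\omega_{r,t}),\quad \Lambda\eqdef\bigl(\phi^{\circledast}(K)(1+\delta_T)+C\bigr)D_T+\phi^{\circledast}(K)\epsilon_3+\epsilon_2 K+\epsilon_1.
\end{equation*}
Since $r_{s,s}=0$ and $r$ is continuous in its indices, the Davie lemma delivers $|r_{t,s}|\leq L\Lambda\varpi(\omega_{s,t})$ for a constant $L$ depending only on $\delta_T$ and $\varkappa$. Taking $s=0$, the identity $y_t-z_t=r_{t,0}+\phi_{t,0}(y_0)-\zeta_{t,0}(z_0)$ combined with \eqref{eq:h2:gamma1} and $|\alpha_{t,0}(z_0)|\leq\epsilon_3$ produces $D_T\leq L\Lambda\varpi(\omega_{0,T})+(1+\delta_T)|y_0-z_0|+\epsilon_3$. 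As $\Lambda$ is affine in $D_T$, choosing $T$ small enough that $L(\phi^{\circledast}(K)(1+\delta_T)+C)\varpi(\omega_{0,T})\leq 1/2$ allows one to absorb $D_T$ on the left, whence $D_T\leq C'(\epsilon_1+\epsilon_2+\epsilon_3+|y_0-z_0|)$, which is the first claim. Re-inserting this control into $\Lambda$ and then into $|r_{t,s}|\leq L\Lambda\varpi(\omega_{s,t})$ yields the second claim.

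The main obstacle is the proper use of the $\varpi$-compatibility: the argument $K\varpi(\omega_{r,s})$ of $\widehat{\phi_{t,s}}$ naturally lives on the sub-interval $[r,s]$, whereas the compatibility relation of Definition~\ref{def:varpi-compatible} is written on $[s,t]$, so the monotonicity clause $\widehat{\phi_{t,s}}\leq\widehat{\phi_{t,r}}$ on $[s,t]\subset[r,t]$ must be used to transfer the estimate onto the enlarged interval $[r,t]$ before applying the compatibility. A secondary subtlety is the self-referential appearance of $D_T$ inside $\Lambda$, which is resolved by the standard smallness-of-$T$ absorption argument above.
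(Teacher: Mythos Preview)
Your proof is correct and follows essentially the same approach as the paper: define the defect $V_{s,t}=|r_{t,s}|$, expand $V_{r,t}-V_{s,t}$ via the identity $\zeta=\phi+\alpha$ into a $4$-point increment of $\phi_{t,s}$ plus Lipschitz-controlled remainders, obtain the Davie-type inequality $V_{r,t}\leq V_{s,t}+(1+\delta_T)V_{r,s}+B\varpi(\omega_{r,t})$ with $B$ affine in $\normsup{y-z}$, apply the Davie lemma, and absorb $\normsup{y-z}$ by choosing $T$ small. Two minor remarks: you should invoke the continuous-time version Lemma~\ref{lem:davie:CT} rather than Lemma~\ref{lem:davie}, and the a~priori bound $|r_{t,s}|\leq 2K\varpi(\omega_{s,t})$ (trivially implied by the hypotheses on $y$ and $z$) is what plays the role of \eqref{eq:n1:CT} there.
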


Corollary~\ref{cor:manifold} below is immediate by applying $\phi=\zeta$ 
to Proposition~\ref{prop:rde:1}. Its proof relies of the continuous time version of Lemma~\ref{lem:davie}
which we now state.

\begin{lemma}[{Continuous time version of Lemma~\ref{lem:davie}}]
    \label{lem:davie:CT}
Let us consider a family $U\eqdef \Set{U_{s,t}}_{s,t\in,\TT^2}$ with values in $\RR_+$ satisfying
 for any $(r,s,t)\in\TT^3,$ 
\begin{align}
\label{eq:n1:CT}
U_{r,s}&\leq E\varpi(\omega_{r,s}),\\
\notag
U_{r,t}&\leq(1+\alpha_T)U_{r,s}+(1+\alpha_T)U_{s,t}+B\varpi(\omega_{r,t}), 
\end{align}
for some constants $E\geq 1$, $B\geq 0$ and $\alpha_T\geq 0$ that decreases to $0$ as $T\to 0$.
Then for any $T$ such that $\varkappa(1+\alpha_T)<1$, 
    \begin{gather}
	\notag
	U_{r,t}\leq A\varpi(\omega_{r,t}),\ \forall (r,t)\in\TT^2,\\
	\label{eq:davie:4:cst:CT}
	\text{ with }A\eqdef B\frac{(2+\alpha_T)}{1-(\varkappa(1+\alpha_T)^2+\delta_T)}.
    \end{gather}
    In particular, the choice of $A$ in \eqref{eq:davie:4:cst:CT}
    does not depend on the bound $E$ in \eqref{eq:n1:CT}.
\end{lemma}
\begin{proof}
    The proof is similar as the one of Lemma~7 in \cite{brault1}
    from Eq.~(31).
\end{proof}

\begin{corollary}
    \label{cor:manifold}
    If $\phi$ is a stable almost flow, there exists 
    one and only one manifold of solutions to  $\vd y=\phi_{\vd t}(y)$.
    Besides, this manifold of solutions is Lipschitz.
\end{corollary}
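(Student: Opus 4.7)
The plan is to derive every claim in the corollary directly from the machinery already developed, via one application of Proposition~\ref{prop:rde:1} with $\zeta=\phi$.

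Existence is the easy half. By Corollary~\ref{prop:stable_almost_flow_unique_flow}, the galaxy of a stable almost flow $\phi$ contains a (necessarily unique) Lipschitz flow $\psi$. Lemma~\ref{lem:rde:1} then delivers a manifold of solutions for free by setting $y_{r\curvearrowright t}(a)\eqdef\psi_{t,r}(a)$: this family satisfies $y_{r\curvearrowright r}(a)=a$ and, because $\phi\sim\psi$ with $\|\psi_{t,s}-\phi_{t,s}\|_\infty\leq L\varpi(\omega_{s,t})$ (again Corollary~\ref{prop:stable_almost_flow_unique_flow}), also satisfies the Davie estimate \eqref{eq:davie:5}. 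Lipschitzness of the flow $\psi$ moreover immediately gives the uniform Lipschitz continuity $\sup_t |y_{r\curvearrowright t}(a)-y_{r\curvearrowright t}(a')|\leq (1+\delta_T)|a-a'|$, so whatever manifold of solutions we produce this way is already Lipschitz in the sense required.

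Uniqueness and the Lipschitz bound for any other manifold are the two statements I would handle together by invoking Proposition~\ref{prop:rde:1} with the choice $\zeta=\phi$. Under this choice $\alpha_{t,s}\equiv 0$ and $\alpha_{t,s,r}\equiv 0$, so the three error parameters $\epsilon_1,\epsilon_2,\epsilon_3$ of the proposition may be taken equal to~$0$. For uniqueness, if $y^{(1)}$ and $y^{(2)}$ are two solutions in the sense of Davie sharing the same value at time $r$, the proposition yields
\begin{equation*}
    |y^{(1)}_t-y^{(2)}_t|\leq C\,|y^{(1)}_r-y^{(2)}_r|=0,
\end{equation*}
so $y^{(1)}\equiv y^{(2)}$ on the small interval on which the proposition applies. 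For Lipschitz continuity, the same inequality applied to $y=y_{r\curvearrowright \cdot}(a)$ and $z=y_{r\curvearrowright\cdot}(a')$ gives $\sup_t |y_{r\curvearrowright t}(a)-y_{r\curvearrowright t}(a')|\leq C|a-a'|$, which is exactly the definition of a Lipschitz manifold of solutions.

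The only non-cosmetic point is that Proposition~\ref{prop:rde:1} is stated for a sufficiently small horizon~$T$, so the small-time uniqueness has to be propagated to the whole interval~$\TT$. I would handle this by the standard concatenation: partition $\TT$ into finitely many subintervals each short enough for the proposition to apply, and on each subinterval restart at the right-endpoint value of the previous step. The Davie estimate \eqref{eq:davie:5} is local in time, so restricting a solution to any subinterval is still a solution, and uniqueness on each subinterval glues to uniqueness on $\TT$; the same concatenation promotes the local Lipschitz constant into a global one (at the price of a multiplicative factor depending only on the number of pieces, itself depending only on $\phi$ and~$T$). This pasting is the only step where care is needed, and it is entirely routine given the semiflow structure inherited from $\psi$.
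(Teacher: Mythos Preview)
Your proof is correct and follows exactly the route the paper intends: the paper's own argument consists of the single sentence ``immediate by applying $\phi=\zeta$ to Proposition~\ref{prop:rde:1}'', which is precisely your uniqueness/Lipschitz step with $\epsilon_1=\epsilon_2=\epsilon_3=0$. Your added existence paragraph (via Corollary~\ref{prop:stable_almost_flow_unique_flow} and Lemma~\ref{lem:rde:1}) and the short-time-to-global concatenation are the routine details the paper leaves implicit.
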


\begin{remark}
\label{rem:unique_flow}
As seen in Lemma~\ref{lem:rde:1}, the 
notion of manifold of solution is associated to a galaxy. 
Hence, a galaxy with a stable almost flow is associated
to a unique manifold of solutions (actually, 
we have not proved that if $\phi$ is a stable almost flow, 
then the associated flow is also stable). 
\end{remark}

\begin{proof}[Proof of Proposition~\ref{prop:rde:1}]
We define
\begin{equation*}
    V_{s,t}=\abs{z_{t}-\zeta_{t,s}(z_s)-y_t+\phi_{t,s}(y_s)},\ \forall(s,t)\in\TT^2.
\end{equation*}
Clearly, $V_{s,t}\leq 2K\varpi(\omega_{s,t})$. 

For any $(r,s,t)\in\TT^3$, 
\begin{multline*}
    z_t-\zeta_{t,r}(z_r)-y_t+\phi_{t,r}(y_r)\\
    = z_t-\zeta_{t,s}(z_s)-y_t+\phi_{t,s}(y_s)
    +
    \zeta_{t,s}(z_s)-\zeta_{t,s}(\zeta_{s,r}(z_r))\\
    -\phi_{t,s}(y_s)+\phi_{t,s}(\phi_{s,r}(y_r))
    +\zeta_{t,s,r}(z_r)-\phi_{t,s,r}(y_r)\\
    =
    z_t-\zeta_{t,s}(z_s)-y_t+\phi_{t,s}(y_s)
    +\alpha_{t,s}(z_s)-\alpha_{t,s}(\zeta_{s,r}(z_r))\\
    +\phi_{t,s}(z_s)-\phi_{t,s}(\zeta_{s,r}(z_r))
    -\phi_{t,s}(y_s)+\phi_{t,s}(\phi_{s,r}(y_r))\\
    +\alpha_{t,s,r}(z_r)+\phi_{t,s,r}(z_r)-\phi_{t,s,r}(y_r).
\end{multline*}
Set $L\eqdef\sup_{(r,s,t)\in\TT^3}\normlip{\phi_{t,s,r}}$.
With the $4$-points control
of $\phi_{s,t}$, 
\begin{multline*}
    V_{t,r}
    \leq V_{t,s}
    +\oldwidecheck{\phi}_{t,s}V_{r,t}
	+L\abs{z_r-y_r}\varpi(\omega_{r,t})
    \\
    +\widehat{\phi}_{t,s}(\abs{z_s-\zeta_{s,r}(z_r)}\vee\abs{y_s-\phi_{s,r}(y_r)})
	\cdot (\abs{z_s-y_s}\vee\abs{\zeta_{s,r}(z_r)-\phi_{s,r}(y_r)})\\
	+\abs{\alpha_{t,s,r}(z_r)}
	+\normlip{\alpha_{t,s}}\abs{z_s-\zeta_{s,r}(z_r)}.
\end{multline*}

Since $\oldwidecheck{\phi}_{t,s}\leq 1+\delta_T$ and $\normlip{\phi_{t,s}}\leq 1+\delta_T$, 
\begin{multline*}
    V_{t,r}
	\leq V_{t,s}+(1+\delta_T)V_{r,t}
	+L\normsup{z-y}\varpi(\omega_{r,t})\\
	+\widehat{\phi}_{t,s}(K\varpi(\omega_{s,r}))\big((1+\delta_T)\normsup{z-y}+\epsilon_3)
	+(\epsilon_1+\epsilon_2K)\varpi(\omega_{r,t}).
\end{multline*}
Since $\widehat{\phi}_{t,s}$ is $\varpi$-compatible (see Defintion~\ref{def:varpi-compatible}), 
$\widehat{\phi}_{t,s}(K\varpi(\omega_{r,s}))\leq \Phi(K)\varpi(\omega_{r,t})$ so that
\begin{gather}
    \label{eq:rde:12}
    V_{r,t}\leq V_{s,t}+ (1+\delta_T) V_{r,t}+B\varpi(\omega_{r,t})\\
    \label{eq:rde:11}
    \text{ with }B\eqdef (L+(1+\delta_T)\Phi(K))\normsup{y-z}+\epsilon_1+\epsilon_2 K+\epsilon_3 \Phi(K)).
\end{gather}
Owing to \eqref{eq:rde:12}-\eqref{eq:rde:11}, 
from  Lemma~\ref{lem:davie:CT}, 
for $T$ small enough (depending only on $\varkappa$ and $T\mapsto \delta_T$), 
for all $(r,t)\in\TT^2$, 
\begin{equation*}
    V_{r,t}\leq BC\varpi(\omega_{r,t}) 
\text{ with  } 
    C\eqdef \frac{2+\delta_T}{1-(\varkappa(1+\delta_T)^2+\delta_T)}.
\end{equation*}

For any $t\in[0,T]$,  
since $\normlip{\phi_{t,0}}\leq 1+\delta_T$, 
\begin{multline*}
    \abs{y_t-z_t}
    \leq \abs{y_t-z_t-\phi_{t,0}(y_0)+\zeta_{t,0}(z_0)}
    +\abs{\phi_{t,0}(z_0)-\zeta_{t,0}(z_0)}
    +\abs{\phi_{t,0}(z_0)-\phi_{t,0}(y_0)}
    \\
    \leq 
    V_{0,t}+\epsilon_3+(1+\delta_T)\abs{y_0-z_0}.
\end{multline*}
With the expression of $B$ in \eqref{eq:rde:11}, for any $t\in[0,T]$, 
we see that there exists constants $A$ and $A'$ that 
depend only on $\varkappa$, $L$, $\delta_T$, $K$ and $\Phi(K)$ such that 
\begin{equation*}
    \abs{y_t-z_t}\\
    \leq 
    A\normsup{y-z}\varpi(\omega_{0,T})
    +A'(\epsilon_1+\epsilon_2+\epsilon_3)\varpi(\omega_{0,T})
    +\epsilon_3
    +(1+\delta_T)\abs{y_0-z_0}.
\end{equation*}
Choosing $T$ small enough so that $A\varpi(\omega_{0,T})\leq 1/2$ implies that
\begin{equation*}
    \normsup{y-z}
    \leq 
    2A'(\epsilon_1+\epsilon_2+\epsilon_3)\varpi(\omega_{0,T})
    +2\epsilon_3
    +2(1+\delta_T)\abs{y_0-z_0}.
\end{equation*}
This concludes the proof.
\end{proof}

\section{Application to Rough differential equation}
\label{sec:applications}

In this section, we show how our framework allows us to link the different flow
based approaches. The key is to show that Friz-Victoir's and Bailleul's almost
flows are different perturbations of the Davie's almost flow. 
Friz-Victoir's approach is limited to geometric rough paths of regularity $2\leq p<3$
in finite dimensional spaces (see however \cite{grong} for a recent extension to infinite dimensional space). 
Bailleul's one work even in 
an infinite dimensional Banach space, while being restricted
\textit{a priori} to geometric rough paths. 
For rough paths of regularity $2\leq p<3$, Davie's approach can be used 
for geometric rough paths as well as non-geometric ones. To deal 
with non-geometric rough paths with Friz-Victoir's and Bailleul's approaches, 
one may use the results of \cite{lejay-victoir} for a regularity $p\in(2,3]$
and \cite{hairer-kelly} for $p\geq 3$.
As shown in \cite{bailleul18,lejay20}, using formal logarithhms
could be applied in broader structures than tensor algebras, such as algebras
of root trees. This also allows to consider non-geometric rough paths.

We did not recall notions of rough path theory. The reader can find
a clear introduction in \cite{lejay03,lyons07a,friz14a,friz}.
 We start by giving some notations.

  In  this section, the remainder  introduced in
  Section~\ref{sec:controls} is of the type
  \begin{align*}
    \varpi(\tau):=\tau^{(2+\gamma)/p}, ~\forall \tau>0,
  \end{align*}
  with  $\gamma\in (0,1]$ and a real number $p>0$
  satisfying $2+\gamma>p$.

\subsection{Rough path notations}
Before showing the link between the different based flow approaches,
we set notations of classical objects of  rough path theory.

Given another Banach space $(\uU,\abs{\cdot})$ and a real number $p\geq 1$, let us denote by 
$\cpC(\TT,\uU)$ the space of $1/p$-Hölder paths controlled by $\omega$, which we equip we the semi-norm
\begin{align*}
\normp{x}\eqdef \sup_{(s,t)\in\rTT^2, s\neq t}\frac{\abs{x_{s,t}}}{\omega_{s,t}^{1/p}},
\end{align*} 
this quantity being bounded by definition.

We define also $\cpV([s,t],\uU)$ the space of bounded $p$-variation paths
from $[s,t]$ to $\uU$ which we equip with the $p$-variation semi-norm on $[s,t]$ denoted by $\normpa{[s,t]}{x}$.

Moreover, if $x\in\cpC([0,T],\uU)$, then $x\in\cpV([s,t],\uU)$  and
\begin{align*}
\normpa{[s,t]}{x}\leq \normp{x}\omega_{s,t}^{1/p}.
\end{align*}

We denote by $\lfloor\cdot\rfloor$ the floor function.

For an integer $N\geq \lfloor p\rfloor$, let $\cT^{p,N}(\uU)$ be
the space of $1/p$-Hölder rough path controlled by $\omega$ of order $N$. If $\bx\in\cT^{p,N}(\uU)$ we denote by $\bx^{(k)}$ the component of $\bx$ in
$\uU^{\otimes k}$ with $0\leq k\leq N$ an integer and
$S_j(\bx)\eqdef \sum_{j=0}^k\bx^{(j)}$. Obviously, $\bx=S_N(\bx)$. We denote the homogeneous semi-norm
 \begin{align*}
 \normp{\bx}\eqdef \sup_{k\leq N}\sup_{[s,t]\in\TT^2, s\neq t}\frac{\abs{\bx^{(k)}_{s,t}}}{\omega_{s,t}^{k/p}},
 \end{align*}
which is finite by definition. Moreover we set $\cT^p(U)\eqdef \cT^{p,\lfloor p\rfloor}(U)$.

For $N\geq 0$, we denote $G^N(\uU)$ the free nilpotent group
(Chapter~$7$ in \cite{friz}).

Let $\cG^p(\uU)\eqdef \cpC([0,T],G^{\lfloor p \rfloor}(\uU))$ be the set of weak-geometric rough paths of finite $1/p$-Hölder rough path controlled by $\omega$ with values in $\uU$.

 When $\uU=\RR^\ell$ ($\ell\geq 0$ an integer). For any multi-indice  $I\eqdef (i_1,\dots,i_k)\in\{1,\dots,\ell\}^k$ we set $\abs{I}\eqdef k$ and $e_I\eqdef e_{i_1}\otimes\dots\otimes e_{i_k}$
 where $\Set{e_1,\dots,e_\ell}$ is the canonical basis of $\RR^\ell$. If $\bx\in\cT^{p}(\RR^\ell)$. If $\bx\in\cT^{p}(\RR^\ell)$, then
$\bx^I$ denote the coordinates of $\bx^{(k)}$ in the basis $(e_I)_{\abs{I}=k}$. It follows that
$S_k(\bx)=\sum_{\abs{I}\leq k}\bx^Ie_I$. If $x\in\cbV(\TT,\RR^l)$ then for any integer $N\geq 0$,
\begin{align*}
S_N(x)=\sum_{\abs{I}\leq N} x^Ie_I,
\end{align*}
where $x^I_{s,t}\eqdef \int_{s\leq t_k\leq \dots\leq t_1\leq t}\vd{x}_{t_k}^{i_k}\dots\vd x^{i_1}_{t_1}$.

\subsection{Davie's approach}
\label{sec:applications_davie}

Let us consider now a $p$-rough path $\bx\in\cT^p(\uU)$ with $2\leq p<3$ for a Banach space~$\uU$. 
A Rough Differential Equation (RDE) is a solution $y$ taking its values in another Banach space $\uV$ to 
\begin{equation}
    \label{eq:rde:1}
y_t=a+\int_s^t f(y_u)\vd \bx_u,~\forall (s,t)\in\rTT^2, 
\end{equation}
provided that $f:\uV\to L(\uU,\uV)$ is regular enough. 

Existence of solution to \eqref{eq:rde:1} was proved first by T.~Lyons
using a Picard fixed point theorem \cite{lyons98a}. In \cite{davie05a}, A.M. Davie
provided an alternative approach based on Euler-type schemes. Over the approach
of T. Lyons, it has the advantage that the solution is thought of as a path 
with values in $\uV$ and not in the tensor space $\uT_2(\uU\oplus\uV)$. 

For $f\in\cC_b^1(\uV,L(\uU,\uV))$, we define
\begin{equation}
    \label{eq:rde:2}
    \phi_{t,s}(a)=a+f(a)\bx^{(1)}_{s,t}+\vd f(a)\cdot f(a)\bx^{(2)}_{s,t},
\end{equation}
where $\vd f(a)$ is the differential of $f$ in $a$.
Definition~\ref{def:solution_davie} coincides with the one defined 
by A.~M.~Davie in \cite{davie05a} for the notion of solution to~\eqref{eq:rde:1}.

The following lemma is a generalization in an infinite dimensional setting of
Theorems~3.2 and~3.3 in \cite{davie05a} with a bounded function $f$.

\begin{lemma}
\label{lem:davie_almost_stable_flow}
Let $p\in\itvco{2,3}$ and $\gamma>p-2$. 
\begin{enumerate}[thm]
    \item\label{lem:davie_almost_stable_flow:i} If $f\in \cC^{1+\gamma}_b{(\uV,L(\uU,\uV))}$, then the family $\phi$ defined by~\eqref{eq:rde:2} is an almost flow.
    \item\label{lem:davie_almost_stable_flow:ii} If $f$ is of class $\cC^{2+\gamma}_b{(\uV,L(\uU,\uV))}$, then $\phi$ is a stable almost flow.
\end{enumerate}
\end{lemma}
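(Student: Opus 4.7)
The plan is to verify the conditions \eqref{eq:h0}--\eqref{eq:h3} of Definition~\ref{def:almost_flow} for (i), and additionally the $\varpi$-compatible $4$-points control and the Lipschitz estimate \eqref{eq:prop_lip_t_s_r} of Definition~\ref{def:stable_almost_flow} for (ii). Throughout I write $F_1\eqdef f$, $F_2\eqdef \vd f\cdot f$, $X_k\eqdef \bx^{(k)}_{s,t}$, $Y_k\eqdef \bx^{(k)}_{r,s}$, and $b\eqdef\phi_{s,r}(a) = a+F_1(a)Y_1+F_2(a)Y_2$. The keystone combines Chen's relations $\bx^{(1)}_{r,t}=Y_1+X_1$ and $\bx^{(2)}_{r,t}=Y_2+X_2+Y_1\otimes X_1$ with the first-order Taylor expansion $F_1(b)-F_1(a)=\vd F_1(a)(b-a)+R_1(a,b)$ and the algebraic identity $\vd F_1(a)F_1(a)=F_2(a)$: the $F_2(a)Y_1\otimes X_1$ contribution inside $\vd F_1(a)(b-a)X_1$ exactly cancels the Chen off-diagonal term $-F_2(a)Y_1\otimes X_1$, leaving
\begin{equation}
\label{eq:mylem:phitsr}
\phi_{t,s,r}(a) = \vd F_1(a)F_2(a)Y_2 X_1 + R_1(a,b)X_1 + (F_2(b)-F_2(a))X_2,
\end{equation}
where $R_1(a,b)\eqdef F_1(b)-F_1(a)-\vd F_1(a)(b-a)$.

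For (i), \eqref{eq:h0} and \eqref{eq:h1} are immediate from $\bx^{(k)}_{s,s}=0$ and $|\bx^{(k)}_{s,t}|\leq\|\bx\|_p\omega_{s,t}^{k/p}$ together with boundedness of $f$ and $\vd f$. For \eqref{eq:h2}, expand $\phi_{t,s}(b)-\phi_{t,s}(a)=(b-a)+(F_1(b)-F_1(a))X_1+(F_2(b)-F_2(a))X_2$ and use Lipschitzness of $F_1$ plus $\gamma$-Hölder continuity of $F_2$ (valid since $f\in\cC^{1+\gamma}_b$); the induced Hölder coefficient has size $\omega_{s,t}^{2/p}$ and \eqref{eq:h_eta} reduces to the positive exponent $\gamma(\gamma+1)/p$, absorbed by small $T$. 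For \eqref{eq:h3}, the three terms of~\eqref{eq:mylem:phitsr} have sup-norms $O(\omega_{r,t}^{3/p})$, $O(|b-a|^{1+\gamma}\omega_{s,t}^{1/p})\leq O(\omega_{r,t}^{(2+\gamma)/p})$ (from $\vd F_1\in\cC^\gamma$), and $O(|b-a|^\gamma\omega_{s,t}^{2/p})\leq O(\omega_{r,t}^{(2+\gamma)/p})$. Since $\gamma\leq 1$ gives $3\geq 2+\gamma$, all three are $O(\varpi(\omega_{r,t}))$.

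For (ii), $f\in\cC^{2+\gamma}_b$ upgrades $F_2$ to $\cC^{1+\gamma}_b$ (since $\vd F_2=\vd^2 f\cdot f+\vd f\cdot\vd f\in\cC^\gamma_b$); in particular $F_2$ is Lipschitz, so~\eqref{eq:h2:gamma1} holds. Applying Lemma~\ref{lem:4_points_f} to $F_1$ (with $\gamma=1$) and to $F_2$ (with Hölder exponent $\gamma$) and combining linearly, $\phi_{t,s}$ satisfies a $4$-points control with $\widecheck{\phi}_{t,s}=1+\|\vd f\|_\infty|X_1|+\|\vd F_2\|_\infty|X_2|\leq 1+\delta_T$ for $T$ small, and $\widehat{\phi}_{t,s}(x)\leq C(\omega_{s,t}^{1/p}x+\omega_{s,t}^{2/p}x^\gamma)$. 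Monotonicity in $[s,t]$ follows from super-additivity of $\omega$, and $\varpi$-compatibility reduces to checking $(3+\gamma)/p,(2+\gamma(2+\gamma))/p\geq (2+\gamma)/p$, both elementary.

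The main obstacle is the Lipschitz bound~\eqref{eq:prop_lip_t_s_r}. Denote the $a$-derivative of $b(a)$ by $J(a)\eqdef I+\vd F_1(a)Y_1+\vd F_2(a)Y_2 = I+O(\omega_{r,s}^{1/p})$. From~\eqref{eq:mylem:phitsr}, the first term has Lipschitz norm $\leq C|Y_2||X_1|\leq C\omega_{r,t}^{3/p}$, since $a\mapsto\vd F_1(a)F_2(a)$ is Lipschitz under $f\in\cC^{2+\gamma}_b$. For the third term, the $a$-derivative of $F_2(b(a))-F_2(a)$ is $\vd F_2(b)J(a)-\vd F_2(a)=(\vd F_2(b)-\vd F_2(a))+\vd F_2(b)(J(a)-I)=O(|b-a|^\gamma)+O(\omega_{r,s}^{1/p})=O(\omega_{r,s}^{\gamma/p})$, whence the contribution is $\leq C\omega_{r,s}^{\gamma/p}\omega_{s,t}^{2/p}\leq C\varpi(\omega_{r,t})$. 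For the Taylor term, differentiating $R_1(a,b(a))$ and using $\vd F_1(b)-\vd F_1(a)=\vd^2 F_1(a)(b-a,\cdot)+O(|b-a|^{1+\gamma})$ (from $\vd^2 F_1\in\cC^\gamma$) yields, after cancellation of the $\vd^2 F_1(a)(b-a,\cdot)$ terms, a derivative of size $O(|b-a|^{1+\gamma})+O(|b-a|\cdot\omega_{r,s}^{1/p})\leq C\omega_{r,s}^{(1+\gamma)/p}$; multiplied by $|X_1|$ this gives $O(\omega_{r,t}^{(2+\gamma)/p})=O(\varpi(\omega_{r,t}))$. Summing, $\normlip{\phi_{t,s,r}}\leq C\varpi(\omega_{r,t})$, which completes the verification.
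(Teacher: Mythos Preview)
Your proof is correct and follows essentially the same line as the paper's. The main organizational difference is that you first isolate the closed-form identity~\eqref{eq:mylem:phitsr} for $\phi_{t,s,r}$ via Chen's relation and a first-order Taylor expansion, and then read off both the sup-norm bound~\eqref{eq:h3} and the Lipschitz bound~\eqref{eq:prop_lip_t_s_r} term by term; the paper instead differentiates $\phi_{t,s,r}$ directly and regroups into the two pieces $\run''_{r,s,t}$, $\rdeux''_{r,s,t}$ without writing the intermediate closed form. Your route makes the key cancellation of the $F_2(a)Y_1\otimes X_1$ term explicit and lets the same formula serve both parts (i) and (ii), which is arguably cleaner; the paper's direct differentiation avoids introducing the Taylor remainder $R_1$ as a named object but requires a slightly longer case analysis. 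You also verify part~(i) from scratch, whereas the paper cites \cite{brault1}; and your $\widehat{\phi}_{t,s}(x)\leq C(\omega_{s,t}^{1/p}x+\omega_{s,t}^{2/p}x^\gamma)$ uses the linear power $x$ from applying Lemma~\ref{lem:4_points_f} to $f\in\cC^2_b$ with exponent~$1$, which is the correct output of that lemma. The $\varpi$-compatibility checks and the treatment of $\widecheck{\phi}_{t,s}$ are identical in substance.
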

\begin{proof}
According to Proposition~5 in \cite{brault1}, $\phi$ is an almost flow
as soon as $f\in \cC^{1+\gamma}_b{(\uV,L(\uU,\uV))}$ with $2+\gamma>p$.

We now assume that $f\in\cC^{2+\gamma}_b{(\uV,L(\uU,\uV))}$.  We show that $\phi$
verifies a $\varpi$-compatible $4$-points control (see Definition~\ref{def:varpi-compatible}).
For any $a,b,c,d\in\uV$, $(s,t)\in\TT^2$, we compute
\begin{multline}
\phi_{t,s}(a)-\phi_{t,s}(b)-\phi_{t,s}(c)+\phi_{t,s}(d)
=a-b-c+d+\underbrace{[f(a)-f(b)-f(c)+f(d)]\bx^{(1)}_{s,t}}_{\run_{s,t}'}\\
+\underbrace{[\vd f\cdot f(a)-\vd f\cdot f(b)-\vd f\cdot f(c)+\vd f\cdot f(d)]\bx^{(2)}_{s,t}}_{\rdeux'_{s,t}}.
\label{eq:phi_rough_4_points}
\end{multline}
Then, we apply Lemma~\ref{lem:4_points_f} to $f\in \cC^2_b{(\uV,L(\uU,\uV))}$ and to  $\vd f\cdot f\in \cC^{1+\gamma}_b{(\uV,L(\uU^{\otimes 2},\uV))}$ to obtain
\begin{multline}
\label{eq:run'}
|\run_{s,t}'|
\leq\normp{\bx^{(1)}}\omega_{s,t}^{1/p} 4\normsup{\vd^2f}(|a-c|\vee |b-d|)^2|a-b|
\\
+\normp{\bx^{(1)}}\omega_{0,T}^{1/p}\normsup{\vd f}|a-b-c+d|,
\end{multline}
and
\begin{multline}
\label{eq:rdeux'}
|\rdeux_{s,t}'|
\leq\normpp{\bx^{(2)}} \omega_{s,t}^{2/p}\Bigg[2\normf{\gamma}{\vd(\vd f\cdot f)}\left(|a-c|\vee |b-d|\right)^\gamma|a-b|\\
+\normsup{\vd(\vd f\cdot f)}|a-b-c+d|\Bigg].
\end{multline}
Combining \eqref{eq:phi_rough_4_points}, \eqref{eq:run'} and \eqref{eq:rdeux'} we set for all $y\in\RR_+$, $\widehat{\phi_{t,s}}(y)\eqdef c'_1\left[\omega_{s,t}^{1/p} y^2+\omega_{s,t}^{2/p}y^\gamma\right]$
where $c'_1\eqdef \normp{\bx^{(1)}}4\normsup{\vd^2f}+\normpp{\bx^{(2)}} 2\normf{\gamma}{\vd(\vd f\cdot f)}$ and
\begin{align*}
\oldwidecheck{\phi_{t,s}}\eqdef 1+\normp{\bx^{(1)}}\omega_{0,T}^{1/p}+\normsup{\vd(\vd f\cdot f)}\normpp{\bx^{(2)}} \omega_{0,T}^{2/p}.
\end{align*}
It follows that $\oldwidecheck{\phi_{t,s}}\leq 1+\delta_T$ and that $\phi_{t,s}$ is $\varpi$-compatible. Indeed, for $\alpha\in\RR_+$,
\begin{multline*}
\widehat{\phi_{t,s}}\left(\alpha\omega_{s,t}^{(2+\gamma)/p}\right)
    \leq c'_1(\alpha^2\vee \alpha^\gamma)\left(\omega_{s,t}^{(5+2\gamma)/p}
+\omega_{s,t}^{(2+2\gamma+\gamma^2)/p}\right)\\
\leq c'_1(\alpha^2\vee \alpha^\gamma)\left(\omega_{0,T}^{(3+\gamma)/p}
+\omega_{0,T}^{(\gamma+\gamma^2)/p}\right)\omega^{(2+\gamma)/p}_{s,t}
\leq \delta_T \varpi(\omega_{s,t}).
\end{multline*}
It remains to show that \eqref{eq:prop_lip_t_s_r} holds. As $\phi_{t,s}\in\cC_b^{1+\gamma}{(\uV,\uV)}$, thus the two semi-norms $\normlip{\phi_{t,s}}$ and
$\normsup{\vd \phi_{t,s}}$ are equivalent. We recall that 
$\phi_{t,s,r}=\phi_{t,s}\circ\phi_{s,r}-\phi_{t,r}$.
For any $a\in \uV$ and $(r,s,t)\in\rTT^3$, 
\begin{align}
\nonumber
\vd \phi_{t,s,r}(a)=& (\vd \phi_{s,r}(a)\vd f\circ\phi_{s,r}(a)-\vd f(a))\bx^{(1)}_{s,t}-\vd (\vd f\cdot f)(a)(\bx^{(2)}_{s,t}-\bx^{(1)}_{r,s}\otimes\bx^{(1)}_{s,t})\\
\nonumber
&+\vd\phi_{s,r}(a)\vd(\vd f\cdot f)\circ\phi_{s,r}(a)\bx^{(2)}_{s,t}\\
\nonumber
=&\underbrace{\left(-\vd f(a)+\vd\phi_{s,r}(a)\vd f\circ\phi_{s,r}(a)-\vd
 (\vd f\cdot f)(a)\bx_{r,s}^1\right)\bx^{(1)}_{s,t}}_{\run''_{r,s,t}}\\
&-\underbrace{\left(\vd (\vd f\cdot f)(a)-\vd \phi_{s,r}(a)\vd (\vd f\cdot f)\circ\phi_{s,r}(a)\right)\bx_{s,t}^2}_{\rdeux''_{r,s,t}}.
\label{eq:d_phi_rough}
\end{align}
Each term is estimated separately. For the first one,
\begin{align}
\nonumber
|\run''_{r,s,t}|\leq & |\vd f\circ\phi_{s,r}(a)-\vd f(a)-\vd^2 f(a) (\phi_{s,r}(a)-a)||\bx^{(1)}_{s,t}|\\
\nonumber
&+|\vd f(a)\vd f(a)\bx^{(2)}_{r,s}\bx_{s,t}^1|
+|\vd (\vd f \cdot f)(a)\vd f\circ\phi_{s,r}(a)\bx_{r,s}^{(2)}\bx^{(1)}_{s,t}|\\
\nonumber
&+|\vd f(a)[\vd f\circ\phi_{s,r}(a)-\vd f(a)]\bx_{r,s}^1\otimes\bx_{s,t}^1\\
\leq & \normf{\gamma}{\vd^2 f}|\phi_{s,r}(a)-a|^{1+\gamma}|\bx^{(1)}_{s,t}|
\nonumber
+(\normsup{\vd f}^2+\normsup{\vd(\vd f\cdot f)\vd f})|\bx_{r,s}^{(2)}||\bx^{(1)}_{s,t}|\\
\nonumber
&+\normsup{\vd f}\normsup{\vd^2 f}|\phi_{s,r}(a)-a||\bx_{r,s}^1||
\bx_{s,t}^1|\\
\leq & c_1''(\omega_{r,t}^{(2+\gamma)/p}+2\omega^{3/p}_{r,t})
\leq c_1''(1+2\omega_{0,T}^{(1-\gamma)/p})\omega_{r,t}^{(2+\gamma)/p},
\label{eq:run''}
\end{align}
where $c_1''$ is a constant which depends on $f$, $\bx$, $\gamma$.

The second one is more simple,
\begin{align}
\nonumber
|\rdeux''_{r,s,t}|\leq & |\vd(\vd f\cdot f)\circ\phi_{s,r}(a)-\vd(\vd f\cdot f)(a)||\bx^{(2)}_{s,t}|+|\vd \phi_{s,r}(a)-1|\vd(\vd f\cdot f)\circ\phi_{s,r}(a)\bx_{s,t}^2\\
\nonumber
\leq & \normsup{\vd(\vd f\cdot f)}|\phi_{s,r}(a)-a|^\gamma|\bx^{(2)}_{s,t}|
+\normsup{\vd(\vd f\cdot f)}|\vd\phi_{s,r}(a)-1||\bx^{(2)}_{s,t}|\\
\leq & c''_2 (\omega_{r,t}^{(2+\gamma)/p}+\omega_{r,t}^{3/p})
\leq  c''_2(1+\omega_{0,T}^{(1-\gamma)/p})\omega_{r,t}^{(2+\gamma)/p}.
\label{eq:rdeux''}
\end{align}
Finally, combining \eqref{eq:d_phi_rough}, \eqref{eq:run''} and \eqref{eq:rdeux''}, $\normsup{\vd\phi_{t,s,r}}\leq (c''_1+c_2'')(1+\omega_{0,T}^{(1-\gamma)/p})\varpi(\omega_{r,t})$ with
$\varpi(\omega_{r,t})=\omega_{r,t}^{(2+\gamma)/p}$. This proves
\eqref{eq:prop_lip_t_s_r} and that $\phi$ is a stable almost flow.
\end{proof}

Combining Lemma~\ref{lem:davie_almost_stable_flow}, Proposition~\ref{prop:unique_flow},
Corollaries~\ref{prop:stable_almost_flow_unique_flow} and~\ref{cor:manifold}  
leads to the following result.

\begin{corollary}
\label{cor:davie_flow}
    If $f$ is of class $\cC^{2+\gamma}_b{(\uV,L(\uU,\uV))}$, then $\phi^\pi$ converges to a unique Lipschitz flow~$\psi$. Moreover, if $\chi$ is an almost flow in a galaxy containing $\phi$, 
    then, $\chi^\pi$ converges to $\psi$. Besides, there exists a unique
    manifold of solutions to  $\vd y=\phi_{\vd t}(y)$ which is Lipschitz.
\end{corollary}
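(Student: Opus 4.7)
The statement is a synthesis corollary, so the plan is to verify in turn the hypotheses needed to apply each of Lemma~\ref{lem:davie_almost_stable_flow}, Corollary~\ref{prop:stable_almost_flow_unique_flow}, Proposition~\ref{prop:unique_flow} and Corollary~\ref{cor:manifold}, and assemble their conclusions. No new estimates should be needed.

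First, I would invoke item~\ref{lem:davie_almost_stable_flow:ii} of Lemma~\ref{lem:davie_almost_stable_flow}: since $f\in\cC^{2+\gamma}_b$ and $\gamma>p-2$ with $p\in\itvco{2,3}$, the family $\phi$ defined by~\eqref{eq:rde:2} belongs to $\cS\cA_{\delta_T,\varpi}(\uV)$ with $\varpi(\delta)=\delta^{(2+\gamma)/p}$. In particular $\phi$ satisfies a $\varpi$-compatible $4$-points control, $\widecheck{\phi}_{t,s}\le 1+\delta_T$, the Lipschitz bound~\eqref{eq:prop_lip_t_s_r} and~\eqref{eq:h2:gamma1}.

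Next, I would apply Corollary~\ref{prop:stable_almost_flow_unique_flow}: this yields the existence of a unique Lipschitz flow $\psi$ in the galaxy containing $\phi$, together with the quantitative estimate $\normlip{\psi_{t,s}-\phi_{t,s}}\le L\varpi(\omega_{s,t})$; in particular, via Theorem~\ref{prop:stable_almost_flow_UL} and the UL condition, $\phi^\pi_{t,s}(a)\to \psi_{t,s}(a)$ pointwise for every $(s,t)\in\rTT^2$ and $a\in\uV$. For the second claim, given any almost flow $\chi$ in the galaxy of $\phi$, the galaxy is, by transitivity, the galaxy of the Lipschitz flow $\psi$, so Proposition~\ref{prop:unique_flow} applies to $\chi\sim\psi$ and gives the convergence $\chi^\pi_{t,s}(a)\to\psi_{t,s}(a)$.

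Finally, for the manifold statement, since $\phi$ is stable, Corollary~\ref{cor:manifold} applies directly and delivers existence, uniqueness and the Lipschitz continuity of the manifold of solutions to $\dot y=\phi'(y)$. There is no real obstacle here: the only point requiring a moment of care is to note that \emph{galaxy membership is an equivalence relation}, so that from $\chi\sim\phi$ and $\phi\sim\psi$ one obtains $\chi\sim\psi$ before invoking Proposition~\ref{prop:unique_flow}; everything else is a direct citation of the results established in Sections~\ref{sec:stable_almost_flow} and~\ref{sec:generalized_solution_rde} together with Lemma~\ref{lem:davie_almost_stable_flow}.
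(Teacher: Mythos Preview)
Your proposal is correct and follows essentially the same approach as the paper, which simply states that the corollary is obtained by combining Lemma~\ref{lem:davie_almost_stable_flow}, Proposition~\ref{prop:unique_flow}, Corollaries~\ref{prop:stable_almost_flow_unique_flow} and~\ref{cor:manifold}. Your write-up is in fact more detailed, spelling out the transitivity of the galaxy relation before invoking Proposition~\ref{prop:unique_flow}, but the logical structure is identical.
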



\subsection{Almost flows constructed from sub-Riemannian geodesics, as in P.~Friz and N. Victoir}
\label{subsec:friz_victoir}

In \cite{friz,friz2008}, P. Friz and N. Victoir proposed an approach based
on the use of geodesics. The following proposition is one of the fundamental result of their framework.

Now, we assume that $\uU=\RR^\ell$.

\begin{proposition}[{Remark~10.10, \cite[p.~216]{friz}}] 
    \label{prop:geodesic}
 Let $p\geq 1$ a real number and an integer $N\geq \lfloor p\rfloor$. For any $\bx\in\cpC([0,T],G^N(\RR^\ell))$ and any $(s,t)\in\TT^2$, 
    there exists a path $x^{s,t}\in\cbV(\RR^\ell)$ defined on $[s,t]$ such that 
    \begin{equation*}
	S_N(x^{s,t})_{s,t}=\bx_{s,t}
	\text{ and }
	\norma{[s,t]}{x^{s,t}}\leq K\normp{\bx}\omega_{s,t}^{1/p}\leq K'\normp{S_{\lfloor p\rfloor}(\bx)}\omega_{s,t}^{1/p}.
    \end{equation*}
    for some universal constant $K$ (resp. $K'$) that depends only on $p$ ($p$ and $N$). We say that $x^{s,t}$ is a \emph{geodesic path} associated to $\bx$.
\end{proposition}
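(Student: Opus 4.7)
The plan is to realize $x^{s,t}$ as the projection onto $\RR^\ell$ of a sub-Riemannian geodesic in the step-$N$ free nilpotent group joining $\bx_s$ to $\bx_t$, and then compare the Carnot-Carathéodory length of that geodesic with the homogeneous rough path norm of $\bx_{s,t}$.

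First I would set up the Carnot-Carathéodory (CC) structure on $G^N(\RR^\ell)$. A horizontal curve issued from $g\in G^N(\RR^\ell)$ is by definition one of the form $u\mapsto g\otimes S_N(y)_{s,u}$ with $y\in\cbV([s,t],\RR^\ell)$, and one defines the left-invariant CC-norm by
\begin{equation*}
    \|g\|_{CC}\eqdef \inf\Set{\norma{[0,1]}{y}\given y\in\cbV([0,1],\RR^\ell),\ S_N(y)_{0,1}=g}.
\end{equation*}
By Chow-Rashevsky this infimum is finite for every $g\in G^N(\RR^\ell)$. To obtain an actual minimizer, reparametrize admissible $y$'s by arc length so as to make them $1$-Lipschitz with values in the fixed compact set $\overline{B(0,\|g\|_{CC}+1)}\subset\RR^\ell$, then extract a subsequential limit via Ascoli-Arzelà. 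Since $S_N$ and the $1$-variation are lower semicontinuous under uniform convergence, the limit achieves the infimum. Call it $x^{s,t}$: then $\norma{[s,t]}{x^{s,t}}=\|\bx_s^{-1}\otimes\bx_t\|_{CC}=\|\bx_{s,t}\|_{CC}$ and $S_N(x^{s,t})_{s,t}=\bx_{s,t}$ by construction.

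Next I would use the equivalence of homogeneous norms on the finite-dimensional Carnot group $G^N(\RR^\ell)$: there is a constant $K_N$ (a form of the ball-box theorem) such that
\begin{equation*}
    \|g\|_{CC}\leq K_N\max_{1\leq k\leq N}\abs{g^{(k)}}^{1/k}, \quad \forall g\in G^N(\RR^\ell).
\end{equation*}
Applied to $g=\bx_{s,t}$, the very definition of $\normp{\bx}$ gives $\abs{\bx^{(k)}_{s,t}}^{1/k}\leq\normp{\bx}\omega_{s,t}^{1/p}$ for each $1\leq k\leq N$, hence $\norma{[s,t]}{x^{s,t}}\leq K_N\normp{\bx}\omega_{s,t}^{1/p}$, which is the first inequality with $K=K_N$. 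For the second inequality, invoke Lyons' extension theorem: provided $N\geq\lfloor p\rfloor$ there is a constant $\widetilde K=\widetilde K(p,N)$ with $\normp{\bx}\leq\widetilde K\normp{S_{\lfloor p\rfloor}(\bx)}$, since $\bx$ is uniquely reconstructed from $S_{\lfloor p\rfloor}(\bx)$ in the weak geometric setting, with a quantitative control.

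The main obstacle is Step~2, the existence of a length-minimizing geodesic, which is a non-trivial sub-Riemannian fact: the abstract infimum that defines $\|\cdot\|_{CC}$ need not be attained in general metric settings, but for the finite-dimensional left-invariant Carnot structure on $G^N(\RR^\ell)$ the compactness argument above goes through. Everything else is then a bookkeeping exercise relating the CC-norm to the homogeneous norm and to $\normp{\bx}$.
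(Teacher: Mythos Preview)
The paper does not supply its own proof of this proposition: it is quoted verbatim as Remark~10.10 of Friz--Victoir and used as a black box, so there is nothing in the paper to compare your argument against line by line.

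That said, your sketch is correct and is essentially the argument one finds in the cited reference. The CC-norm on $G^N(\RR^\ell)$ is defined exactly as you wrote; Chow--Rashevsky gives finiteness, and the arc-length reparametrisation plus Ascoli--Arzelà compactness produces a genuine minimiser (this is where finite-dimensionality of $\RR^\ell$ is used). One small point: you should say a word more on why $S_N(y^m)_{0,1}\to S_N(y)_{0,1}$ along the extracted subsequence --- uniform convergence alone is not enough for iterated integrals, but since all $y^m$ are $1$-Lipschitz with a common bound you have uniform bounded variation, and then continuity of the signature map in that regime (e.g.\ interpolation between uniform and $1$-variation topology) does the job. The equivalence-of-homogeneous-norms step is exactly the ball-box estimate, and the final inequality via Lyons' extension is the right way to pass from $\normp{\bx}$ to $\normp{S_{\lfloor p\rfloor}(\bx)}$.
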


\begin{remark}
If $x\in\cbV([s,t],\RR^\ell)$, then $\norma{[s,t]}{x}=\int_s^t\abs{\vd x_r}$.
\end{remark}

For notational convenience, we prefer now to express differential equations
with respect to vector fields, that is a family of functions 
$\hf\eqdef (\hf_1,\dots,\hf_\ell)$ that acts on $\cCb^1(\uV,L(\uU,\uV))$.
Therefore for $x\in\cbV(\RR_+,\RR^\ell)$, 
the equation $z_t=a+\int_0^t \hf\id(z_s)\vd x_s$ 
is equivalent to $y_t=a+\int_0^t f(z_s)\vd x_s$ with $f=\hf\id$. 
For a multi-indice $I\eqdef (i_1,\dots,i_k)\in\{1,\dots,\ell\}^k$ and $(s,t)\in\TT$ we
denote $\hf_I\eqdef \hf_{i_1}\dots \hf_{i_n}$. By convention, $\hf_{\emptyset}\id\eqdef \id$.


Let us fix $n\geq 2$ in $\hf\id\in\cCb^{\lambda-1}$ for $\lambda\geq n$. 
Let  $\bx$ be a rough path with values in $\uT_n(\uU)$
and  $V$ be a vector field such that $\hf\id\in\cCb^{n-1}$.
Let us define
\begin{align}
  \label{eq:phi^n}
    \phi^{(n)}_{t,s}[\bx,\hf](a)\eqdef 
    \sum_{k=0}^n \sum_{\abs{I}=k}
    \hf_I\id(a)\bx^{I}_{s,t},\ \forall (s,t)\in\rTT^2.
\end{align}

The next proposition summarizes various results on RDEs
(Theorem~10.26 in \cite[p.~233]{friz}, Theorem 10.30 in \cite[p.~238]{friz}).
When $\hf\id\in\cCb^{1+\gamma}$ but no in $\cCb^{2+\gamma}$ with $2+\gamma>p$, 
several solutions to the RDE~\eqref{eq:rde:1} may exist
(See Example~2 in~\cite{davie05a}). 

\begin{theorem}
    \label{thm:fv:1}
    Assume that $\uU$ and $\uV$ are finite dimensional Banach spaces.
    Choose $\lambda>2$ as well as an integer $n$ with $2\leq n\leq \lfloor \lambda\rfloor$.
    Let $\bx$ be a $p$-rough paths with values in $\uT_n(\uV)$.
    Let us assume that $\hf\id\in\cC^{\lambda-1}$
    for a vector field $\hf:\uV\to L(\uU,\uV)$. It holds that
    \begin{enumerate}[thm]
	\item\label{prop:fv:1:i} When $\lambda>p$,  there exists a 
	    flow $\psi[\bx,\hf]$ in the same galaxy as $\phi^{(n)}[\bx,\hf]$, 

	\item\label{prop:fv:1:iii} When $1+\lambda>p$, then there exists a 
	    unique flow as well as a unique Lipschitz manifold 
	    of solutions to  $\vd y=\phi^{(n)}_{\vd t}(y)$ ($\phi^{(n)}$
            is defined in \eqref{eq:phi^n}).

	    In addition, for any partition $\pi=\Set{t_i}_{i=0}^k$, 
	    \begin{equation}
		\label{eq:new:4}
		\abs{\psi_{t,s}[\bx,\hf](a)-{\phi^{(n)}}^\pi_{t,s}[\bx,\hf]}
		\leq
		C\normp{\bx}\sup_{i=0,\dotsc,k-1}\omega_{t_i,t_{i+1}}^{\frac{n+\gamma}{p}-1},
	    \end{equation}
	    with $\gamma\eqdef \min\Set{\lambda-n,1}$ and a constant $C$
	    that depends on $\omega_{0,T}$, $\normp{\bx}$ and $\hf$.
    \end{enumerate}
\end{theorem}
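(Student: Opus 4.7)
The plan is to realize $\phi^{(n)}[\bx,\hf]$ as a perturbation of genuine ODE flows obtained via the Friz--Victoir geodesic construction, and then combine the perturbation machinery of Section~\ref{sec:perturbation_lip} with the stability results of Section~\ref{sec:stable_almost_flow}. Fix a partition $\pi=\{t_0<\dotsb<t_N\}$ of $\TT$. After extending $\bx$, if necessary, to a weak-geometric rough path in $G^n(\uU)$ via Lyons--Victoir, Proposition~\ref{prop:geodesic} produces on each cell $[t_i,t_{i+1}]$ a geodesic $x^i\in\cbV([t_i,t_{i+1}],\uU)$ whose step-$n$ lift matches $\bx_{t_i,t_{i+1}}$ and whose total variation is bounded by $K\normp{\bx}\omega_{t_i,t_{i+1}}^{1/p}$. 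Concatenating the $x^i$ into a BV path $x^\pi$ and solving $\dot z=\hf\id(z)\,\dot x^\pi$ yields a classical Lipschitz flow $\psi^\pi\in\cF^+(\uV)$.

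The crux is an Euler-type expansion: developing the ODE flow at order $n$ on each cell, together with $\hf\id\in\cC^{\lambda-1}$, yields
\begin{equation*}
  \normsup{\psi^\pi_{t_{i+1},t_i}-\phi^{(n)}_{t_{i+1},t_i}[\bx,\hf]}
  \leq C\,\omega_{t_i,t_{i+1}}^{(n+\gamma)/p},
\end{equation*}
with $\gamma\eqdef\min\{\lambda-n,1\}$ and $C$ depending on $\hf$, $\normp{\bx}$ and $\omega_{0,T}$. The remainder comes from the $\gamma$-Hölder regularity of the top-order iterated vector field $\hf_I\id$ with $\abs{I}=n$, while the identification of the main term with $\phi^{(n)}_{t_{i+1},t_i}[\bx,\hf]$ uses the defining property $S_n(x^i)_{t_i,t_{i+1}}=\bx_{t_i,t_{i+1}}$ of the geodesic lift. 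A short endpoint-adjustment argument extends the bound to arbitrary $(s,t)\in\rTT^2$, so that $\epsilon^\pi\eqdef\psi^\pi-\phi^{(n)}[\bx,\hf]$ satisfies~\eqref{eq:p:1}--\eqref{eq:p:4} of Proposition~\ref{prop:perturbation_flow} uniformly in $\pi$; under $\lambda>p$ one has $(n+\gamma)/p>1$, so this error sits comfortably inside the galaxy's remainder $\varpi$.

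For part~\ref{prop:fv:1:i}, finite dimensionality of $\uU,\uV$ together with the uniform spatial Lipschitz bound on $\psi^\pi$ allow Ascoli--Arzelà to extract a subsequence $\pi_k$ along which $\psi^{\pi_k}_{t,s}(a)$ converges pointwise to a limit $\psi_{t,s}(a)$; since each $\psi^{\pi_k}$ is a genuine flow, the semigroup identity $\psi_{t,s}\circ\psi_{s,r}=\psi_{t,r}$ passes to the limit, and the uniform perturbation bound yields $\psi\sim\phi^{(n)}[\bx,\hf]$. Corollary~\ref{cor:stability} supplies the compactness frame and the preservation of the hypotheses in the limit.

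For part~\ref{prop:fv:1:iii}, the stronger regularity is exploited to promote $\phi^{(n)}[\bx,\hf]$ from almost flow to \emph{stable} almost flow: one generalizes Lemma~\ref{lem:davie_almost_stable_flow}~\ref{lem:davie_almost_stable_flow:ii} from $n=2$ to arbitrary $n$ by applying Lemma~\ref{lem:4_points_f} coefficient-wise to each $\hf_I\id$, and by verifying~\eqref{eq:prop_lip_t_s_r} via a Taylor-type differentiation paralleling~\eqref{eq:run''}--\eqref{eq:rdeux''} with the additional multi-indices $I$. Corollary~\ref{prop:stable_almost_flow_unique_flow} then produces the unique Lipschitz flow $\psi$ in the galaxy, Corollary~\ref{cor:manifold} provides the unique Lipschitz manifold of solutions, and the rate~\eqref{eq:new:4} follows from Proposition~\ref{prop:rate} with $\mu(\delta)=\delta^{(n+\gamma)/p-1}$ (bounded since $(n+\gamma)/p>1$), producing the factor $\sup_i\omega_{t_i,t_{i+1}}^{(n+\gamma)/p-1}$. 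The principal obstacle throughout is the order-$n$ Taylor expansion itself: bookkeeping the iterated integrals against the geodesic up to depth $n$, identifying them with $\bx^I_{s,t}$, and absorbing the top-order remainder with the sharp Hölder exponent while keeping all constants uniform in $\pi$---this uniformity rests entirely on the geodesic estimate of Proposition~\ref{prop:geodesic}.
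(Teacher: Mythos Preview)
Your treatment of part~\ref{prop:fv:1:i} is essentially the paper's: geodesic approximation of $\bx$ by BV paths, the associated ODE flows, the order-$n$ Euler expansion identifying the flow minus $\phi^{(n)}$ as a perturbation of size $\omega^{(n+\gamma)/p}$ (this is Lemmas~\ref{lem:new:1}--\ref{lem:new:2} in the paper), and Ascoli--Arzel\`a compactness via Corollary~\ref{cor:stability} to pass to the limit.

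For part~\ref{prop:fv:1:iii} there is a genuine gap. You propose to prove directly that $\phi^{(n)}[\bx,\hf]$ is a \emph{stable} almost flow by applying Lemma~\ref{lem:4_points_f} to each $\hf_I\id$ and checking~\eqref{eq:prop_lip_t_s_r}. This breaks at the top level $\abs{I}=n$ whenever $\gamma=\lambda-n<1$: then $\hf_I\id\in\cC^{\gamma}$ is merely H\"older, so Lemma~\ref{lem:4_points_f} (which needs $\cC^{1+\gamma}$) does not apply, and more fundamentally $\phi^{(n)}_{t,s}$ itself is not Lipschitz in $a$. But Definition~\ref{def:stable_almost_flow} requires $\gamma=1$ in~\eqref{eq:h2}; a purely $\gamma$-H\"older function admits no $4$-points control with a continuous $\widehat{f}$ (take $a=b$ to see that $\widehat{f}(x)$ would have to blow up like $x^{\gamma-1}$ near $0$). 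Likewise, the Lipschitz bound~\eqref{eq:prop_lip_t_s_r} on $\phi^{(n)}_{t,s,r}$ cannot be verified by differentiating as in~\eqref{eq:run''}--\eqref{eq:rdeux''}, since the top-order coefficients are not differentiable. So the stable-almost-flow route collapses precisely in the range $n\leq\lambda<n+1$ where the sharp exponent $(n+\gamma)/p$ with $\gamma<1$ matters.

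The paper avoids this by reducing to $n=2$: the lemma immediately after the theorem shows $\phi^{(n)}[\bx,\hf]\sim\phi^{(2)}[\bx,\hf]$, and $\phi^{(2)}$ \emph{is} a stable almost flow by Lemma~\ref{lem:davie_almost_stable_flow}\ref{lem:davie_almost_stable_flow:ii} once $f=\hf\id\in\cC_b^{2+\gamma'}$ with $2+\gamma'>p$. Uniqueness of the flow then comes from Proposition~\ref{prop:unique_flow}, and uniqueness of the Lipschitz manifold of solutions from Corollary~\ref{cor:manifold} applied to $\phi^{(2)}$, which transfers to $\phi^{(n)}$ by Remark~\ref{rem:unique_flow}. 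Your rate argument via Proposition~\ref{prop:rate} with $\mu(\delta)=\delta^{(n+\gamma)/p-1}$ is fine once the Lipschitz flow $\psi$ and the order-$n$ remainder bound are in hand; that step does not need stability of $\phi^{(n)}$ itself.
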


From the next lemma, we obtain that if $\hf\id\in\cCb^{\lambda}$ with $\lambda>p$, 
then there exists a unique flow associated to $\phi^{(n)}[\bx,\hf]$.

\begin{lemma} 
    For any $n\geq 2$, $\phi^{(n)}[\bx,\hf]$ belongs to 
    the same galaxy as $\phi^{(2)}[\bx,\hf]$, with 
    the Davie expansion given by \eqref{eq:rde:2} with $f=\hf\id$.
\end{lemma}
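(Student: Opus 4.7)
The plan is to prove the two assertions of the lemma: first, to identify $\phi^{(2)}[\bx,\hf]$ with the Davie expansion~\eqref{eq:rde:2}; and second, to show that $\phi^{(n)}[\bx,\hf]\sim\phi^{(2)}[\bx,\hf]$ in the sense of Definition~\ref{def_galaxy}. When $n=2$ the second point is trivial, so we may assume $n\geq 3$.

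For the identification, I will simply unroll~\eqref{eq:phi^n} with $n=2$. Setting $f\eqdef\hf\id$, the convention $\hf_\emptyset\id=\id$ handles the $k=0$ term; for $|I|=1$, $\hf_i\id(a)=f_i(a)$; and for $|I|=2$, $\hf_i\hf_j\id(a)=\vd f_j(a)\cdot f_i(a)$. Summing with the Einstein convention reproduces exactly $a+f(a)\bx^{(1)}_{s,t}+\vd f(a)\cdot f(a)\bx^{(2)}_{s,t}$, which is~\eqref{eq:rde:2}.

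For the galaxy equivalence, termwise subtraction gives
$$\phi^{(n)}_{t,s}[\bx,\hf](a)-\phi^{(2)}_{t,s}[\bx,\hf](a)=\sum_{k=3}^n\sum_{|I|=k}\hf_I\id(a)\,\bx^I_{s,t}.$$
The regularity assumption on $\hf\id$ inherited from Theorem~\ref{thm:fv:1} ensures that each coefficient $\hf_I\id$ with $|I|\leq n$ is well-defined and uniformly bounded on $\uV$. On the rough-path side, the $1/p$-Hölder estimate for $\bx\in\cT^{p,n}(\uU)$ controlled by $\omega$ yields $|\bx^I_{s,t}|\leq \normp{\bx}\omega_{s,t}^{k/p}$ when $|I|=k$. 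Combining these bounds, and dominating $\omega_{s,t}^{k/p}$ by $\omega_{0,T}^{(k-3)/p}\omega_{s,t}^{3/p}$ for $k\geq 3$, produces a constant $C$ for which
$$\normsup{\phi^{(n)}_{t,s}[\bx,\hf]-\phi^{(2)}_{t,s}[\bx,\hf]}\leq C\,\omega_{s,t}^{3/p}.$$

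To conclude, since $\gamma\in(0,1]$ one has $3\geq 2+\gamma$, so $\omega_{s,t}^{3/p}\leq \omega_{0,T}^{(1-\gamma)/p}\varpi(\omega_{s,t})$; therefore $\normf{\varpi}{\phi^{(n)}[\bx,\hf]-\phi^{(2)}[\bx,\hf]}<+\infty$ and the two almost flows share a galaxy by Definition~\ref{def_galaxy}. No step in this outline looks delicate: the key observation is that the difference retains only iterated-integral coordinates of order at least $3$, already one level above the regularity $\varpi(\cdot)=\cdot^{(2+\gamma)/p}$ since $2+\gamma<3$. The only point to verify carefully is that $\hf\id$ is smooth enough for the higher-order coefficients $\hf_I\id$ to be well-defined and bounded on $\uV$, but this is built into the framework of the preceding theorem.
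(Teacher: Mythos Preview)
Your proof is correct and follows essentially the same approach as the paper: both compute the difference $\phi^{(n)}-\phi^{(2)}$ as the sum over $k\geq 3$, bound it using $\normsup{\hf_I\id}$ and the rough-path estimate $|\bx^I_{s,t}|\leq \normp{\bx}\,\omega_{s,t}^{k/p}$, and conclude that the difference is $O(\omega_{s,t}^{3/p})\leq C\varpi(\omega_{s,t})$. The paper phrases this as ``$R^{(n)}$ is a perturbation'' in the sense of Definition~\ref{def:perturb} and additionally notes that when $\lambda-n\geq 1$ it is a Lipschitz perturbation, a remark used later but not part of the lemma's statement; your explicit identification of $\phi^{(2)}$ with the Davie expansion and the final comparison $\omega^{3/p}\leq \omega_{0,T}^{(1-\gamma)/p}\varpi(\omega)$ are more detailed than the paper's version.
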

\begin{proof}
Let us write for $n\geq 2$,
\begin{equation*}
    R_{s,t}^{(n)}[\bx,\hf]\eqdef \phi_{s,t}^{(n)}[\bx,\hf]-\phi_{s,t}^{(2)}[\bx,\hf]=
    \sum_{k=3}^n \sum_{\abs{I}\leq k}
    \hf_I\id(a)\bx^{I}_{s,t},\ \forall (s,t)\in\rTT^2.
\end{equation*}
Clearly, $R^{(n)}[\bx,\hf]$ is a perturbation with $\varpi(\tau)=\tau^{3/p}$. 
Moreover, as $\hf\id\in\cCb^{\lambda-1}$, 
$\hf_I\id\in\cCb^{\lambda-k}$ for any word $I$ with $\abs{I}=k$ so 
that when $\lambda-n\geq 1$,
$R^{(n)}[\bx,\hf]$ is a Lipschitz perturbation (Definition~\ref{prop:pert:1}).
\end{proof}

This is however not sufficient to obtain the rate in~\eqref{eq:new:4}.

For a path $x\in\cbV$, we denote for $a\in\uV$ by 
$\psi_{\cdot,s}[x,\hf](a)$ the unique solution to 
\begin{equation}
    \label{eq:new:1}
\psi_{t,s}[x,\hf](a)=a+\int_s^t \hf\id(\psi_{r,s}[x,\hf](a))\vd x_r,\ t\geq s.
\end{equation}
The family $\psi[x,\hf]$ satisfies the flow property.

We set for any $(s,t)\in\rTT^2$, 
\begin{align*}
    \phi^{(n)}_{t,s}[x,\hf](a)&\eqdef \phi^{(n)}_{t,s}[S_n(x),\hf](a),
    \\
\epsilon^{(n)}_{t,s}[x,\hf](a)
&=\sum_{\abs{I}=n}
    \int_{s\leq t_{k-1}\leq\dots\leq t_1\leq t}
    \left(
	\hf_I\id(\psi_{t_k,s}[x,\hf](a))-\hf_I\id(a)
    \right)\vd x^I_{t_k},
\end{align*}
where $\vd x^I_{t_k}=\vd x_{t_k}^{i_k}\dots\vd x_{t_1}^{i_1}$.
Using iteratively the Newton formula on \eqref{eq:new:1}, 
\begin{equation*}
    \psi_{t,s}[x,\hf](a)=\phi_{t,s}^{(n)}[x,\hf](a)
    +\epsilon^{(n)}_{t,s}[x,\hf](a).
\end{equation*}

We denote by $\normhold{\gamma}{f}$ the $\gamma$-Hölder semi-norm of a function $f$.
For $\gamma=1$, this is the Lipschitz semi-norm.
Thus, $\gamma\eqdef \min\Set{\lambda-n,1}$ is the Hölder indice
of $\hf_I\id$ with $\abs{I}=n$.

From Proposition~10.3 in \cite[p.~213]{friz}, 
for a constant $C$ that depends on $\lambda$ and on 
\begin{equation*}
    \simplenorm{\hf}_*\eqdef 
\max_{\abs{I}\leq n}\normsup{\hf_I\id}
+
\max_{\abs{I}\leq n-1}\normlip{\hf_I\id}
+\max_{\abs{I}=n}\normhold{\gamma}{\hf_I\id},
\end{equation*}
it holds that 
\begin{equation}
\label{eq:new:5}
\abs{\epsilon_{t,s}^{(n)}[x,\hf](a)}
	\leq
	C\normf{[s,t],1}{x}^{n+\gamma}.
\end{equation}

\begin{lemma} 
    \label{lem:new:1}
    Let $x$ be a path of finite $1$-variation
    with $\normf{[s,t],1}{x}\leq A\omega_{s,t}$ for any $(s,t)\in\rTT^2$. 
    Let $x^{s,t}$ be the geodesic path given by Proposition~\ref{prop:geodesic}. 
    Assume that there exists a constant $K$ such that 
    $\normf{[s,t],1}{x^{s,t}}\leq K\omega_{s,t}^{1/p}$.
    Then there exists a time $T>0$ small enough and a constant $D$ that 
    depend only on $\omega$, $K$ and $\simplenorm{\hf\id}_\star$ such that
    \begin{equation*}
	\normsup{\psi_{t,s}[x^{s,t},\hf]-\psi_{t,s}[x,\hf]}
	\leq D\omega_{s,t}^{\frac{n+\gamma}{p}},\ \forall (s,t)\in\rTT^2.
    \end{equation*}
    In particular, the choice of $T$ and $D$ does \textit{not} depend on $A$.
\end{lemma}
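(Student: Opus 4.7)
The strategy is to compare both $\psi_{t,s}[x,\hf](a)$ and $\psi_{t,s}[x^{s,t},\hf](a)$ to the common Davie-type polynomial $\phi^{(n)}_{t,s}[\bx,\hf]$ with $\bx\eqdef S_n(x)$, and then invoke the triangle inequality. The crucial identity is $S_n(x^{s,t})_{s,t}=S_n(x)_{s,t}=\bx_{s,t}$ by the defining property of the geodesic (Proposition~\ref{prop:geodesic}), so that $\phi^{(n)}_{t,s}[\bx,\hf]$ computed from $x$ and from $x^{s,t}$ coincide at this single pair of endpoints even though the underlying signatures differ on sub-intervals.

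For the geodesic side, a direct application of the Euler estimate~\eqref{eq:new:5} on $[s,t]$ with $\normf{[s,t],1}{x^{s,t}}\leq K\omega_{s,t}^{1/p}$ immediately yields
\begin{equation*}
\abs{\psi_{t,s}[x^{s,t},\hf](a)-\phi^{(n)}_{t,s}[\bx,\hf](a)}=\abs{\epsilon^{(n)}_{t,s}[x^{s,t},\hf](a)}\leq C\simplenorm{\hf\id}_\star K^{n+\gamma}\omega_{s,t}^{(n+\gamma)/p},
\end{equation*}
which is of the desired order and free of $A$.

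For the $x$-side, the naive Euler estimate yields only $CA^{n+\gamma}\omega_{s,t}^{n+\gamma}$, which carries the unwanted $A$-dependence. The key observation to bypass this is that $\normp{\bx}$ is itself controlled by $K$ alone: writing $\bx^{(k)}_{s,t}=\int_{s\leq t_k\leq\dots\leq t_1\leq t}\vd x^{s,t}_{t_k}\dotsm \vd x^{s,t}_{t_1}$ and using $\norma{[s,t]}{x^{s,t}}\leq K\omega_{s,t}^{1/p}$ one obtains $\abs{\bx^{(k)}_{s,t}}\leq K^k\omega_{s,t}^{k/p}/k!$. Adapting the computations of Lemma~\ref{lem:davie_almost_stable_flow} to level $n$ (using $\hf\id\in\cCb^{n-1+\gamma}$ through $\simplenorm{\hf\id}_\star$), one checks that $\phi^{(n)}[\bx,\hf]$ is a stable almost flow whose stability constants ($\widehat{\phi}$, $\widecheck{\phi}$ and the bound~\eqref{eq:prop_lip_t_s_r}) depend only on $\normp{\bx}$---hence on $K$---and on $\simplenorm{\hf\id}_\star$. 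Corollary~\ref{prop:stable_almost_flow_unique_flow} then supplies a unique Lipschitz flow $\tilde\psi$ in the galaxy of $\phi^{(n)}[\bx,\hf]$ with
\begin{equation*}
\normsup{\tilde\psi_{t,s}-\phi^{(n)}_{t,s}[\bx,\hf]}\leq L\omega_{s,t}^{(n+\gamma)/p},
\end{equation*}
and $L$ depending only on $K$, $\omega$, $T$ and $\simplenorm{\hf\id}_\star$.

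It remains to identify $\tilde\psi$ with $\psi[x,\hf]$. The ODE flow $\psi[x,\hf]$ is a genuine flow, and the Euler estimate~\eqref{eq:new:5} (with its finite though $A$-dependent constant $CA^{n+\gamma}\omega_{0,T}^{n+\gamma-(n+\gamma)/p}$) shows that it belongs to the galaxy of $\phi^{(n)}[\bx,\hf]$; Proposition~\ref{prop:unique_flow} then forces $\psi[x,\hf]=\tilde\psi$, so the $K$-dependent bound above transfers to $\psi[x,\hf]$ and $A$ drops out. Combining with the geodesic estimate yields
\begin{equation*}
\normsup{\psi_{t,s}[x^{s,t},\hf]-\psi_{t,s}[x,\hf]}\leq (C\simplenorm{\hf\id}_\star K^{n+\gamma}+L)\omega_{s,t}^{(n+\gamma)/p},
\end{equation*}
which is the claimed estimate. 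The main obstacle is the book-keeping in the third paragraph: one must verify that Lemma~\ref{lem:davie_almost_stable_flow} genuinely extends to arbitrary $n\geq 2$ and that all the stability constants of $\phi^{(n)}[\bx,\hf]$ can be bounded in terms of $\normp{\bx}$ and $\simplenorm{\hf\id}_\star$ only. Once this quantitative control is in place, the galaxy-uniqueness argument of the last paragraph makes the $A$-dependence vanish as required.
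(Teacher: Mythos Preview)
Your strategy has a genuine regularity gap. The stable almost flow property (Definition~\ref{def:stable_almost_flow}) for $\phi^{(n)}[\bx,\hf]$ would require, by the mechanism of Lemma~\ref{lem:davie_almost_stable_flow}\ref{lem:davie_almost_stable_flow:ii}, that the top-order coefficients $\hf_I\id$ with $\abs{I}=n$ be of class $\cC^{1+\gamma}$: one needs $\gamma=1$ in \eqref{eq:h2} (so $\phi^{(n)}_{t,s}$ must be Lipschitz), one needs Lemma~\ref{lem:4_points_f} for the $4$-points control on the top term, and one needs to differentiate $\phi^{(n)}_{t,s,r}$ for \eqref{eq:prop_lip_t_s_r}. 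But $\simplenorm{\hf\id}_\star$ only controls $\normhold{\gamma}{\hf_I\id}$ when $\abs{I}=n$, i.e.\ $\hf\id\in\cC^{n-1+\gamma}_b$, not $\cC^{n+\gamma}_b$. The ``book-keeping obstacle'' you flag is therefore an actual loss of one derivative: under the stated hypotheses $\phi^{(n)}[\bx,\hf]$ is an almost flow but not a stable one, and Corollary~\ref{prop:stable_almost_flow_unique_flow} is unavailable.

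The paper's proof avoids this by working directly with $U_{r,t}\eqdef\normsup{\psi_{t,r}[x,\hf]-\psi_{t,r}[x^{r,t},\hf]}$. Inserting the concatenated geodesic $x^{r,s,t}$ and using the flow property of $\psi$, together with $S_n(x^{r,s,t})_{r,t}=\bx_{r,t}=S_n(x^{r,t})_{r,t}$ so that $\phi^{(n)}[x^{r,s,t},\hf]=\phi^{(n)}[x^{r,t},\hf]$, one obtains the recursion $U_{r,t}\leq U_{s,t}+\exp(C\omega_{0,T}^{1/p})U_{r,s}+C'\omega_{r,t}^{(n+\gamma)/p}$, with $C,C'$ depending only on $K$ and $\simplenorm{\hf\id}_\star$ via \eqref{eq:new:5} and a Gr\"onwall bound on $\normlip{\psi_{t,s}[x^{s,t},\hf]}$. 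The crude $A$-dependent estimate $U_{s,t}\leq B\omega_{s,t}^{(n+\gamma)/p}$ then plays the role of $E$ in the continuous-time Davie lemma (Lemma~\ref{lem:davie:CT}), whose point is precisely that the output constant \eqref{eq:davie:4:cst:CT} does \emph{not} depend on $E$. That is how $A$ disappears.

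Your overall plan can in fact be salvaged without stability: $\phi^{(n)}[\bx,\hf]$ is an ordinary almost flow with constants governed by $\normp{\bx}\leq K$ and $\simplenorm{\hf\id}_\star$, so Theorem~\ref{thm:1} gives $\normsup{\phi^{(n),\pi}_{t,s}-\phi^{(n)}_{t,s}}\leq L\omega_{s,t}^{(n+\gamma)/p}$ with $L$ independent of $A$; the $A$-dependent Euler bound places the Lipschitz flow $\psi[x,\hf]$ in the same galaxy, so Proposition~\ref{prop:unique_flow} yields $\phi^{(n),\pi}\to\psi[x,\hf]$ pointwise, and the $L$-bound passes to the limit. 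This is morally equivalent to the paper's argument (both funnel through a Davie-type inequality whose constant forgets the crude input), but the paper's direct computation on $U$ avoids the level-$n$ almost-flow verification altogether.
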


\begin{proof}
Let us assume that $\normf{[s,t],1}{x}\leq A\omega_{s,t}$
and $\normf{[s,t],1}{x^{s,t}}\leq K\omega_{s,t}^{1/p}$.

Let $x^{r,s,t}$ be the concatenation of $x^{r,s}$ and $x^{s,t}$.
Then
\begin{multline*}
    \psi_{t,r}[x,\hf](a)-\psi_{t,r}[x^{r,t},\hf](a)
    =
    \underbrace{
    \psi_{t,s}[x,\hf](\psi_{s,r}[x,\hf](a))
    -\psi_{t,r}[x^{r,s,t},\hf](a)
    }_{\run_{r,s,t}}
    \\
    \underbrace{
    +\psi_{t,r}[x^{r,s,t},\hf](a)
    -\psi_{t,r}[x^{r,t},\hf](a)
    }_{\rdeux_{r,s,t}}.
\end{multline*}
Since $x^{r,s,t}$ is the concatenation between two paths, 
\begin{equation*}
    \psi_{t,r}[x^{r,s,t},\hf](a)=\psi_{t,s}[x^{s,t},\hf](\psi_{s,r}[x^{r,s},\hf](a)).
\end{equation*}
Thus, 
\begin{multline*}
    \abs{\run_{r,s,t}}
    \leq 
    \abs{\psi_{t,s}[x,\hf](\psi_{s,r}[x,\hf](a))
    -\psi_{t,s}[x^{s,t},\hf](\psi_{s,r}[x,\hf](a))}
    \\
    +\abs{\psi_{t,s}[x^{s,t},\hf](\psi_{s,r}[x,\hf](a))
    -\psi_{t,s}[x^{s,t},\hf](\psi_{s,r}[x^{r,s},\hf](a))}.
\end{multline*}
Writing $U_{r,t}\eqdef \normsup{\psi_{t,r}[x,\hf]-\psi_{t,r}[x^{r,t},\hf]}$,
it holds that 
\begin{equation*}
    \abs{\run_{s,r,t}}
    \leq U_{t,s}+\normlip{\psi_{t,s}[x^{s,t},\hf]}U_{r,s}.
  \end{equation*}
  From (\ref{eq:new:1}), we derive that for $t\geq s$,
  \begin{align}
    \nonumber
    \normlip{\psi_{t,s}[x^{s,t},\hf]}&\leq
    1+\normlip{\hf\id}\int_s^t\normlip{\psi_{r,s}[x^{s,t},\hf]}\abs{\vd
                                     x^{s,t}_r}\\
    \label{eq:gronwall:psi}
    &\leq 1+\normlip{\hf\id}\int_s^t\normlip{\psi_{r,s}[x^{s,t},\hf]}\abs{
    \dot{x}^{s,t}_r}\vd r,
  \end{align}
  where the derivative $\dot{x}^{s,t}$ is almost everywhere defined
  because $x^{s,t}\in\cbV(\RR^\ell)$.

  Then, using the Grönwall's inequality with
  (\ref{eq:gronwall:psi}) and Proposition~\ref{prop:geodesic}, there is constant $C$ that depends only on $K$ (defined in
Proposition~\ref{prop:geodesic}), $\normp{\bx}$  and $\normlip{\hf\id}$
such that 
\begin{equation*}
    \normlip{\psi_{t,s}[x^{s,t},\hf]}\leq \exp(C\omega_{0,T}^{1/p}).
\end{equation*}
Besides, 
\begin{equation*}
S_n(x^{r,s,t})_{r,t}
=S_n(x^{r,s})_{r,s}\otimes S_n(x^{s,t})_{s,t} 
=\bx_{r,s}\otimes\bx_{s,t}
=\bx_{r,t}
=S_n(x^{r,t})_{r,t}.
\end{equation*}
It follows that $\phi^{(n)}[x^{r,s,t},\hf]=\phi^{(n)}[x^{r,t},\hf]$. Thus, 
\begin{multline*}
\abs{\rdeux_{r,s,t}}=\abs{\epsilon^{(n)}_{t,r}[x^{r,s,t},\hf](a)-\epsilon^{(n)}_{t,r}[x^{r,t},\hf](a)}
\\
\leq            \normsup{\epsilon^{(n)}_{t,r}[x^{r,s,t},\hf]}+\normsup{\epsilon^{(n)}_{t,r}[x^{r,t},\hf]}
\leq C'\omega_{s,t}^{\frac{n+\gamma}{p}},
\end{multline*}
 where $C'\geq0$ is a new constant and
using (\ref{eq:new:5}) and Proposition~\ref{prop:geodesic} for the
last estimate.

Thus, 
\begin{equation*}
    U_{r,t}\leq U_{s,t}+\exp(C\omega_{0,T}^{1/p}) U_{r,s}
    +C'\omega_{s,t}^{\frac{n+\gamma}{p}}.
\end{equation*}

On the other hand, when $\omega_{s,t}\leq 1$, 
\begin{multline*}
    U_{s,t}= \normsup{\psi_{t,s}[x,\hf]-\psi_{t,s}[x^{s,t},\hf]}\\
\leq \normsup{\psi_{t,s}[x,\hf]-\phi^{(n)}_{t,s}[x,\hf]}
+\normsup{\psi_{t,s}[x^{s,t},\hf]-\phi^{(n)}_{t,s}[x^{s,t},\hf]}\\
\leq C''\max\Set{A^{n+\gamma}\omega_{s,t}^{n+\gamma},K^{\frac{n+\gamma}{p}}\omega_{s,t}^{\frac{n+\gamma}{p}}}\\
\leq B\omega_{s,t}^{\frac{n+\gamma}{p}}
\text{ with }B\eqdef C''\max\Set{A^{n+\gamma},K^{\frac{n+\gamma}{p}}}.
\end{multline*}
From Lemma~\ref{lem:davie:CT},
there exists a constant $D$ that does not depend on $B$ (hence on $A$) such that 
$U_{s,t}\leq D\omega_{s,t}^{\frac{n+\gamma}{p}}$.
\end{proof}

\begin{lemma}
    \label{lem:new:2}
    Let $x\in\cbV$ be as in Lemma~\ref{lem:new:2}. Then 
    \begin{equation*}
	\normsup{\epsilon^{(n)}_{t,s}[x,\hf]}=
	\normsup{\psi_{t,s}[x,\hf]-\phi^{(n)}_{t,s}[x,\hf]}
	\leq E\omega_{s,t}^{\frac{n+\gamma}{p}},
    \end{equation*}
    where $E$ depends only on $\omega$, $T$, and $K$.
\end{lemma}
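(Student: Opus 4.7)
The plan is to compare $\psi_{t,s}[x,\hf]$ and $\phi^{(n)}_{t,s}[x,\hf]$ through the geodesic path $x^{s,t}$ associated with the truncated signature $S_n(x)_{s,t}$ provided by Proposition~\ref{prop:geodesic}. The crucial observation is that $\phi^{(n)}_{t,s}[x,\hf] = \phi^{(n)}_{t,s}[x^{s,t},\hf]$, since by construction $S_n(x^{s,t})_{s,t} = S_n(x)_{s,t}$ and $\phi^{(n)}$ depends on $x$ only through its truncated signature.

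Given this, I would split
\begin{equation*}
    \psi_{t,s}[x,\hf] - \phi^{(n)}_{t,s}[x,\hf]
    = \bigl(\psi_{t,s}[x,\hf]-\psi_{t,s}[x^{s,t},\hf]\bigr)
    + \bigl(\psi_{t,s}[x^{s,t},\hf]-\phi^{(n)}_{t,s}[x^{s,t},\hf]\bigr),
\end{equation*}
and bound the two pieces separately. The first difference is controlled by Lemma~\ref{lem:new:1}, which yields the bound $D\omega_{s,t}^{(n+\gamma)/p}$ with $D$ depending only on $\omega$, $K$ and $\simplenorm{\hf\id}_*$, but crucially not on $A$. The second difference is exactly $\epsilon^{(n)}_{t,s}[x^{s,t},\hf]$, to which the estimate \eqref{eq:new:5} applies: it is bounded by $C\normf{[s,t],1}{x^{s,t}}^{n+\gamma}$, and Proposition~\ref{prop:geodesic} gives $\normf{[s,t],1}{x^{s,t}} \leq K\omega_{s,t}^{1/p}$, so this contribution is at most $CK^{n+\gamma}\omega_{s,t}^{(n+\gamma)/p}$.

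Combining the two estimates by the triangle inequality gives the result with $E \eqdef D + CK^{n+\gamma}$, and this constant depends only on $\omega$, $T$ and $K$ as claimed. The main subtlety is not a computation but the conceptual point that one must route the comparison through $x^{s,t}$: applying \eqref{eq:new:5} directly to $x$ would produce a bound of order $A^{n+\gamma}\omega_{s,t}^{n+\gamma}$, which depends on $A$ and gives the wrong exponent in $\omega_{s,t}$. The geodesic substitution is what trades the undesirable factor $A^{n+\gamma}$ for the uniform constant $K^{n+\gamma}$, at the cost of introducing the comparison term handled by Lemma~\ref{lem:new:1}.
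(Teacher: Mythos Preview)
Your proposal is correct and follows exactly the same route as the paper: use $\phi^{(n)}_{t,s}[x,\hf]=\phi^{(n)}_{t,s}[x^{s,t},\hf]$ to split into $(\psi_{t,s}[x,\hf]-\psi_{t,s}[x^{s,t},\hf])+\epsilon^{(n)}_{t,s}[x^{s,t},\hf]$, then apply Lemma~\ref{lem:new:1} and \eqref{eq:new:5} respectively. Your write-up is in fact more explicit than the paper's (which contains a couple of typographical slips in its decomposition), and your closing remark about why one cannot apply \eqref{eq:new:5} directly to $x$ is precisely the point.
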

\begin{proof}
    Since $\phi^{(n)}_{t,s}[x,\hf]=\phi^{(n)}_{t,s}[x^{s,t},\hf]$, 
    \begin{equation*}
	\epsilon^{(n)}_{t,s}[x,\hf]\eqdef \psi_{t,s}[x,\hf]-\phi^{(n)}_{t,s}[x,\hf]
	=\psi_{t,s}[x,\hf]-\psi_{t,s}^{(n)}[x,\hf]
	+\epsilon_{t,s}^{(n)}[x,\hf].
    \end{equation*}
    The results is then an immediate consequence of \eqref{eq:new:5}
    and Lemma~\ref{lem:new:1}.
\end{proof}

\begin{proof}[Proof of Theorem~\ref{thm:fv:1}]
  We recall that we assume that $\uU$ and $\uV$ are finite dimensional
  Banach spaces. 
For any $(a,b)\in\uV$,  any $(s,t)\in\rTT^2$, 
\begin{multline*}
    \abs{\phi^{(n)}_{t,s}[x,\hf](a)-\phi^{(n)}_{t,s}[x,\hf](b)}
    \\
    \leq
    \sum_{k=0}^{n-1}\sum_{\abs{I}=k} \normlip{\hf_I\id}\abs{x_{s,t}^I}\cdot\abs{a-b}
    +\sum_{\abs{I}=n}
    \normf{\gamma}{\hf_I\id}\cdot\abs{x_{s,t}^I}\abs{a-b}^\gamma
    \\
    \leq
    \sum_{k=0}^{n-1}\normlip{\hf_k\id}\normf{[s,t],1}{x}^k\cdot\abs{a-b}
    +
    \normf{\gamma}{\hf_n\id}\normf{[s,t],1}{x}^n\abs{a-b}^\gamma.
\end{multline*}
It then follows from Proposition~\ref{prop:perturbation_flow}
that $\phi^{(n)}[x,\hf]$ is an almost flow with 
\begin{equation*}
    \normsup*{\phi^{(n)}_{t,s}[x,\hf](\phi^{(n)}_{s,r}[x,\hf](a))
    -\phi^{(n)}_{t,r}[x,\hf](a)}
    \leq L\omega_{s,t}^{\frac{n+\gamma}{p}}, \forall (r,s,t)\in\rTT^3,
\end{equation*}
for a constant $L$ that depends only on $\normf{[s,t],1}{x}$
and of $\simplenorm{\hf}_*$.

Let $(x^m)_{m\in\NN}$ be a sequence of bounded variation paths such that
$S_{n}(x^m)\xrightarrow[m\to\infty]{} \bx$ uniformly on $[0,T]$ and such that
$\sup_{m\in\NN}\normp{S_{n}(x^m)}\leq c\normp{\bx}$ for a
uniform constant $c$ in $m$. Such a sequence exists according
to Remark~10.32 in \cite{friz}. It consists in concatenating the 
geodesic approximations given by Proposition~\ref{prop:geodesic}. 
From this, $\abs{S_{n}(x^m)_{s,t}}\leq K\omega_{s,t}^{1/p}$
with $K=c\normp{\bx}$.

Clearly, $\phi^{(n)}_{t,s}[x^m,\hf](a)$ converges to 
$\phi^{(n)}_{t,s}[\bx,\hf]$ for any $(s,t)\in\rTT^2$ and any $a\in\uV$. 
The result follows from Corollary~\ref{cor:stability}
and Lemma~\ref{lem:new:2}.
\end{proof}


\subsection{Bailleul's approach}

The approach from I.~Bailleul is the most general one. 
As it relies on using sub-Riemaniann geodesics, Friz-Victoir's approach
is limited to finite dimension, although it has recently been extedend for some parts on infinite
dimensional setting~\cite{grong}. The approach from A.M.~Davie cannot be extended to manifolds
because it involves second-order derivatives. The use of log-signature allows one to 
work on the tangent space, hence Bailleul's approach can be used on Banach manifolds.

\subsubsection{Classical control of ODE solutions}

For $a\in\uV$, the solution to the ordinary differential equation
\begin{equation*}
    y_t(a)=a+\int_0^t \hf\id(y_s(a))\vd s
\end{equation*}
is a path from $[0,T]$ to $\uV$ such that 
\begin{gather}
    \label{eq:ode:1}
    \phi(y_t(a))=\phi(a)+\int_0^t \hf\phi(y_s(a))\vd s
\end{gather}
for any $\phi\in\cC^1(\uV,\uV)$. Assuming enough regularity on both $\phi$ and $\hf$, 
we iterate~\eqref{eq:ode:1} so that 
\begin{gather*}
       \phi(y_t(a))=\phi(a)+t\hf\phi(a)+\dotsb+\frac{t^k}{k!}\hf^{\,k}\phi(a)
    +R(\hf^{\,k}\phi,a;t)
\end{gather*}
with $\hf^{\,0}\phi=\phi$, $\hf^{\,k+1}\phi=\hf(\hf^{\,k}\phi)$, $k=0,1,\dotsc$ and 
\begin{equation*}
    R_k(\psi,a;t)=\int_0^t \int_0^{t_1}\dotsb \int_0^{t_{k-1}}
    \big(\psi(y_{t_k}(a))-\psi(a)\big)\vd t_{k}\dotsb\vd t_1.
\end{equation*}
for a function $\psi:\uV\to\uV$.

\begin{lemma}
\label{lem:ode}
    If $\hf\id$ is uniformly Lipschitz, then for any $a\in\uV$ and any $t\geq 0$.  
    \begin{gather}
	\label{eq:ode:2}
	\abs{y_t(a)-a}\leq t \abs{\hf\id(a)}\exp(\normlip{\hf\id}t).
    \end{gather}
    Moreover, if $\hf\id$ satisfies a 4-points control, then for any $a,b\in\uV$ and any $t\geq 0$,  
    \begin{multline}
	\label{eq:ode:5}
	\Delta_t(a,b)\eqdef \abs{y_s(a)-a-y_s(b)+b}\\
	\leq
	t\widehat{\hf\id}(\alpha_t(a,b))\exp\left((\widehat{\hf\id}(\alpha_t(a,b))+\oldwidecheck{(\hf\id)})t\right)\abs{a-b}
    \end{multline}
    with 
    \begin{equation}
	\label{eq:ode:6}
	\alpha_t(a,b)\eqdef \sup_{s\in[0,t]}\abs{y_s(a)-a}\vee\abs{y_s(b)-b}
	\leq t\big((\abs{\hf\id(a)}\vee\abs{\hf\id(b)})\exp(\normlip{\hf\id}t)\big).
    \end{equation}
    In particular, if $\hf\id$ satisfies a 4-points control and is bounded, then $a\mapsto y(a)$
    is Lipschitz from $\uV$ to $(\cC([0,T],\uV),\normsup{\cdot})$.
\end{lemma}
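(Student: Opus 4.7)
The plan is to handle the three claims in sequence: the first inequality is a standard Grönwall argument, the third is an immediate consequence of the second, and the second is the main content.

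For the bound (\ref{eq:ode:2}), I would start from $y_t(a) - a = \int_0^t \hf\id(y_s(a))\vd s$ together with $|\hf\id(y_s(a))| \leq |\hf\id(a)| + \normlip{\hf\id}\,|y_s(a) - a|$ to obtain
\begin{equation*}
    |y_t(a) - a| \leq t|\hf\id(a)| + \normlip{\hf\id}\int_0^t|y_s(a)-a|\vd s,
\end{equation*}
and conclude by the classical Grönwall inequality. Taking the supremum over $s \leq t$ and over $a$ and $b$ separately gives (\ref{eq:ode:6}).

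For (\ref{eq:ode:5}), set $u_t \eqdef y_t(a) - a - y_t(b) + b$, so $\dot{u}_t = \hf\id(y_t(a)) - \hf\id(y_t(b))$. I would apply the 4-points control for $\hf\id$ at $(y_t(a), a, y_t(b), b)$ in the roles of $(a,b,c,d)$, which is precisely the configuration whose first argument is $|y_t(a) - a| \vee |y_t(b) - b| \leq \alpha_t(a,b)$. This yields
\begin{equation*}
    |\hf\id(y_t(a)) - \hf\id(a) - \hf\id(y_t(b)) + \hf\id(b)| \leq \widehat{\hf\id}(\alpha_t(a,b))(|y_t(a) - y_t(b)| \vee |a-b|) + \widecheck{\hf\id}\,\Delta_t(a,b).
\end{equation*}
Combining with the triangle inequality $|y_t(a) - y_t(b)| \leq |a-b| + \Delta_t(a,b)$, integrating in $t$, and using the monotonicity of $\alpha$ and $\widehat{\hf\id}$, one obtains an estimate of the shape
\begin{equation*}
    \Delta_t(a,b) \leq t\,\widehat{\hf\id}(\alpha_t(a,b))|a-b| + (\widehat{\hf\id}(\alpha_t(a,b)) + \widecheck{\hf\id})\int_0^t \Delta_s(a,b)\vd s,
\end{equation*}
to which Grönwall's lemma applies and delivers the exponential factor in the stated bound.

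The last claim follows immediately: if $\hf\id$ is bounded, then (\ref{eq:ode:6}) shows that $\alpha_t(a,b) \leq 2T\normsup{\hf\id}\exp(\normlip{\hf\id}T)$ uniformly in $a, b, t$, hence $\widehat{\hf\id}(\alpha_t(a,b))$ is a constant; then (\ref{eq:ode:5}) gives $\Delta_t(a,b) \leq C_T|a-b|$, and $|y_t(a) - y_t(b)| \leq |a-b| + \Delta_t(a,b) \leq (1+C_T)|a-b|$ uniformly in $t$ is exactly the required Lipschitz continuity of $a \mapsto y(a)$ into $(\cC([0,T],\uV),\normsup{\cdot})$. The main obstacle is the bookkeeping in the integration step for $\Delta_t$: the naive decomposition $\hf\id(y_s(a)) - \hf\id(y_s(b)) = [\hf\id(a) - \hf\id(b)] + \text{remainder}$ produces a $t\,|\hf\id(a)-\hf\id(b)|$ term that must be controlled via the two-point specialization of the 4-points control (set $c=d$) and folded into the $t\,\widehat{\hf\id}(\alpha_t(a,b))|a-b|$ piece, exploiting that $\widehat{\hf\id}$ is non-decreasing so that its argument can safely be replaced by $\alpha_t(a,b)$.
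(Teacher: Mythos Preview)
Your approach is essentially identical to the paper's: rewrite $y_t(a)-a=\int_0^t(v(y_s(a))-v(a))\,\vd s+tv(a)$ with $v=\hf\id$, apply Gr\"onwall for \eqref{eq:ode:2}, take the difference in $a,b$, insert the 4-points control, and apply Gr\"onwall again for \eqref{eq:ode:5}. The paper's proof in fact glosses over precisely the ``main obstacle'' you flag, namely the residual term $t\,|\hf\id(a)-\hf\id(b)|$ coming from the decomposition; it simply does not appear in the displayed integral inequality there.

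One caveat on your proposed fix: specializing the 4-points control with $c=d$ (say $c=d=a$) gives $|\hf\id(a)-\hf\id(b)|\leq(\widehat{\hf\id}(|a-b|)+\widecheck{\hf\id})|a-b|$, whose argument is $|a-b|$ rather than $\alpha_t(a,b)$ and which carries an extra $\widecheck{\hf\id}$; this cannot be absorbed into $t\,\widehat{\hf\id}(\alpha_t(a,b))|a-b|$ as you suggest. What Gr\"onwall actually delivers is a bound $\Delta_t(a,b)\leq t\,C\exp(C't)|a-b|$ with $C,C'$ depending on $\widehat{\hf\id}(\alpha_t)$, $\widecheck{\hf\id}$ and either $\normlip{\hf\id}$ or $\widehat{\hf\id}(|a-b|)$. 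This is entirely sufficient for the ``in particular'' Lipschitz conclusion (uniform constants once $\hf\id$ is bounded), so your argument is sound; only the exact constant in \eqref{eq:ode:5} as printed is slightly optimistic, and neither your sketch nor the paper's proof quite attains it.
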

\begin{proof} Let us write $v\eqdef \hf\id\in\cC^1(\uV,\uV)$. 
    Since
    \begin{equation}
	\label{eq:ode:3}
	y_t(a)-a=\int_0^t (v(y_s(a))-v(a))\vd s+t v(a), \text{ for any }t\geq 0, 
    \end{equation}
    an immediate application of the Gronwall lemma gives \eqref{eq:ode:2}.

    Since $v$ satisfies a 4-points control, for $a,b\in\uV$, with 
    $\Delta_s(a,b)\eqdef \abs{y_s(a)-a-y_s(b)+b}$, 
    \begin{multline}
	\label{eq:ode:4}
	\abs{v(y_s(a))-v(a)-v(y_s(b))+v(b)}\\
	\leq \widehat{v}(\abs{y_s(a)-a}\vee\abs{y_s(b)-b})\abs{y_s(a)-y_s(b)}\vee \abs{a-b}
	+\oldwidecheck{v}\Delta_s(a,b).
    \end{multline}
    Besides, 
    \begin{equation*}
	\abs{y_s(a)-y_s(b)}\leq \abs{a-b}+\Delta_s(a,b).
    \end{equation*}
    Injecting \eqref{eq:ode:4} into \eqref{eq:ode:3} shows that
    \begin{equation*}
	\Delta_t(a,b)
	\leq  
	\alpha_t(a,b) t\widehat{v}(\alpha_t(a,b))\abs{a-b}
	+(\widehat{v}(\alpha_t(a,b))+\oldwidecheck{v})\int_0^t \Delta_s(a,b)\vd s
    \end{equation*}
    with $\alpha_t(a,b)$ given by \eqref{eq:ode:6}.
    Again, the Gronwall lemma yields~\eqref{eq:ode:5}.
\end{proof}


\subsubsection{Bailleul's approach by truncated logarithmic series}

Here $\uU=\RR^\ell$ for a dimension $\ell\geq 1$.

Let 
${\hf}\eqdef (\hf_1,\dots,\hf_\ell)$ be a family of vector fields which acts on $C^1(\uV,\uV)$ and
  $\bx\in\cG^{p}(\RR^\ell))$ be a weak-geometric $p$-rough path with $2\leq p<3$. By definition of the weak geometric rough paths, $\bx^{i,j}+\bx^{j,i}=\bx^{i}\bx^{j}$
  for any $i,j\in\Set{1,\dotsc,\ell}$.
We denote by $[\hf_i,\hf_j]\eqdef \hf_i\hf_j-\hf_j\hf_i$, the Lie bracket of vector fields
$\hf_i$ and $\hf_j$. The Lie bracket is itself a vector field.

Assuming that {$\hf$} is smooth, we define for any $(s,t)\in\TT^2$,
$\alpha\in\TT$ and $a\in\uV$, the solution $(\alpha,a)\mapsto y_{s,t}(\alpha,a)$ of the ODE,
\begin{align}
y_{s,t}(\alpha,a)=a+\int_0^\alpha \hf_i\id(y_{s,t}(\beta,a))\bx^{i}_{s,t}\vd\beta+
\frac{1}{2}\int_0^\alpha [\hf_i,\hf_j]\id(y_{s,t}(\beta,a))\bx^{i,j}_{s,t}\vd\beta,
\label{eq:ode_bailleul}
\end{align} 
where we omit the summation over all indice $i,j\in\{1,\dots,\ell\}$. 
We write 
\begin{equation}
    \label{eq:bailleul:1}
\chi_{t,s}(a)\eqdef y_{s,t}(1,a)
=\phi_{s,t}+ \epsilon_{t,s},
\end{equation}
where, by iterating \eqref{eq:ode_bailleul},
\begin{align*}
\phi_{t,s}(a)&\eqdef a+\hf_i\id(a)\bx^{i}_{s,t}+\frac{1}{2}\hf_i\hf_j\id(a)\bx^{i}_{s,t}\bx^{j}_{s,t}+\frac{1}{2}[\hf_i,\hf_j]\id(a)\bx^{i,j}_{s,t},\\
\epsilon_{t,s}(a)&\eqdef \int_0^1\int_0^\beta \hf_i\hf_j(\id(y_{s,t}(\gamma,a))-\id(a)]\bx^{i}_{s,t}\bx^{j}_{s,t}\vd\gamma\vd\beta\\
&+
\frac{1}{2}\int_0^1\int_0^\beta[\hf_i,\hf_j][\id(y_{s,t}(\gamma,a))-\id(a)]\bx^{i,j}_{s,t}\vd\gamma\vd\beta\\
&+
\frac{1}{2}\int_0^1\int_0^\beta \hf_i[\hf_j,\hf_k]\id (y_{s,t}(\gamma,a))\bx^{i}_{s,t}\bx^{j,k}_{s,t}\vd\gamma\vd\beta.
\end{align*}
With the weak geometric property of $\bx$ : $\bx_{s,t}^{i,j}+\bx^{j,i}_{s,t}=x^i_{s,t}x^j_{s,t}$, we simplify the expression of $\phi$
such that
\begin{align*}
\phi_{t,s}(a)&=a+\hf_i\id(a)\bx^{i}_{s,t}+\frac{1}{2}\hf_i\hf_j\id(a)\left(\bx^{i,j}_{s,t}+\bx^{j,i}_{s,t}\right)+\frac{1}{2}[\hf_i,\hf_j]\id(a)\bx^{i,j}_{s,t}\\
&=a+\hf_i\id(a)\bx^{i}_{s,t}+\hf_i\hf_j\id(a)\bx^{i,j}_{s,t}.
\end{align*}
So $\phi$ corresponds to the Davie's almost flow defined in \eqref{eq:rde:2}.

\begin{proposition}
    Assume that ${\hf\id}\in\cC^{2+\gamma}_b$ with $2+\gamma>p$. Then,
$\chi$ defined by~\eqref{eq:bailleul:1} is an almost flow which generates a Lipschitz manifold of solutions.
Moreover $\chi^\pi$ converges to the Davie's flow $\psi$ of the Corollary~\ref{cor:davie_flow}.
\end{proposition}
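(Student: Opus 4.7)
The plan is to write $\chi=\phi+\epsilon$, where $\phi$ is the Davie almost flow (already identified in the text) and $\epsilon$ is the sum of the three double-integral remainders displayed in~\eqref{eq:bailleul:1}, and to verify that $\epsilon$ is a (non-Lipschitz) perturbation in the sense of Definition~\ref{def:perturb}. Proposition~\ref{prop:pert:1} will then identify $\chi$ as an almost flow in the galaxy of $\phi$. Combined with Corollary~\ref{cor:davie_flow}, which places a unique Lipschitz flow $\psi$ in that galaxy, Proposition~\ref{prop:unique_flow} will yield $\chi^\pi(a)\to\psi_{t,s}(a)$. The unique Lipschitz manifold of solutions to $\dot y=\phi'(y)$ provided by Corollary~\ref{cor:manifold} is automatically also a manifold of solutions to $\dot y=\chi'(y)$, since the defining estimate \eqref{eq:davie:5} is preserved across a galaxy.

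First I would obtain \emph{a priori} bounds on the parametric flow $y_{s,t}(\alpha,a)$. For fixed $(s,t)\in\rTT^2$, \eqref{eq:ode_bailleul} is an ODE driven by the $\uV$-valued vector field $V^{s,t}(z)\eqdef \hf_i\id(z)\bx^{i}_{s,t}+\tfrac{1}{2}[\hf_i,\hf_j]\id(z)\bx^{i,j}_{s,t}$, whose sup and Lipschitz norms are of order $\omega_{s,t}^{1/p}$ for $T$ small, thanks to $V\in\cC^{2+\gamma}_b$. Lemma~\ref{lem:ode} then produces, uniformly in $\alpha\in[0,1]$, the estimates $\abs{y_{s,t}(\alpha,a)-a}\leq C\omega_{s,t}^{1/p}$, $\abs{y_{s,t}(\alpha,a)-y_{s,t}(\alpha,b)}\leq (1+C\omega_{s,t}^{1/p})\abs{a-b}$, and $\abs{y_{s,t}(\alpha,a)-a-y_{s,t}(\alpha,b)+b}\leq C\omega_{s,t}^{1/p}\abs{a-b}$.

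Next I would estimate $\epsilon$ term by term. Under $V\in\cC^{2+\gamma}_b$, the iterated brackets $\hf_i\hf_j\id$ and $[\hf_i,\hf_j]\id$ lie in $\cC^{1+\gamma}_b$, whereas $\hf_i[\hf_j,\hf_k]\id$ is only in $\cC^{\gamma}_b$. For the first two integrands one evaluates the increment of a $\cC^{1+\gamma}_b$-function along $y_{s,t}(\gamma,\cdot)-\id$ and applies Lemma~\ref{lem:4_points_f} together with the previous step, yielding a sup-norm contribution of order $\omega_{s,t}^{3/p}$ (using $\abs{\bx^{i}_{s,t}\bx^{j}_{s,t}}\vee\abs{\bx^{i,j}_{s,t}}\leq C\omega_{s,t}^{2/p}$) and, in $(a,b)$, a Lipschitz bound of order $\varpi(\omega_{s,t})\abs{a-b}$. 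The third integrand is a bounded $\gamma$-Hölder function evaluated at $y_{s,t}(\gamma,\cdot)$ multiplied by $\bx^{i}_{s,t}\bx^{j,k}_{s,t}$ of order $\omega_{s,t}^{3/p}$, so it contributes $O(\omega_{s,t}^{3/p})$ in sup norm and $O(\omega_{s,t}^{3/p}\abs{a-b}^\gamma)$ in increments. Because $3/p\geq (2+\gamma)/p$, all sup-norm pieces are absorbed into $C\omega_{0,T}^{(1-\gamma)/p}\varpi(\omega_{s,t})$; the Lipschitz part of the first two terms absorbs into $\delta_T\abs{a-b}$ for $T$ small; and the Hölder part of the third reads $\eta(\omega_{s,t})\abs{a-b}^\gamma$ with $\eta(u)\eqdef Cu^{3/p}$, the compatibility \eqref{eq:h_eta} reducing to $C\omega_{s,t}^{(1+\gamma+\gamma^2)/p}\leq \delta_T$, which holds on a small time-horizon. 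Hence $\epsilon$ is a perturbation, and the conclusion follows from the chain of implications outlined in the first paragraph.

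The main obstacle is precisely the third term of $\epsilon$: under only $\cC^{2+\gamma}_b$ regularity of $V$, the nested bracket $\hf_i[\hf_j,\hf_k]\id$ is merely $\gamma$-Hölder, so $\epsilon$ is \emph{not} a Lipschitz perturbation (Definition~\ref{def:lip_perturbation}) and Proposition~\ref{prop:pert:2} cannot be invoked to deduce stability of $\chi$. The Lipschitz character of the manifold of solutions must therefore be imported from the stability of $\phi$ via Corollary~\ref{cor:manifold}, consistently with the statement claiming only that $\chi$ is an almost flow rather than a stable one.
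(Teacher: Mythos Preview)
Your proposal is correct and follows essentially the same route as the paper: decompose $\chi=\phi+\epsilon$, use the a~priori bounds from Lemma~\ref{lem:ode} on the parametric ODE together with Lemma~\ref{lem:4_points_f} on $\hf_i\hf_j\id,[\hf_i,\hf_j]\id\in\cC^{1+\gamma}_b$ and the $\gamma$-Hölder continuity of $\hf_i[\hf_j,\hf_k]\id$ to verify that $\epsilon$ is a (non-Lipschitz) perturbation, then conclude via Proposition~\ref{prop:pert:1} and Corollary~\ref{cor:davie_flow}. Your identification of the obstruction --- that the third integrand forces $\epsilon$ to be merely a perturbation rather than a Lipschitz one, so stability must be imported from $\phi$ --- is exactly the point, and your explicit check of~\eqref{eq:h_eta} with $\eta(u)=Cu^{3/p}$ is slightly more careful than the paper's own treatment.
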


\begin{proof}
We proved in Lemma~\ref{lem:davie_almost_stable_flow} that $\phi$ is a stable
almost flow. We shall show that~$\epsilon_{t,s}$ is a perturbation in the sense
of Definition~\ref{def:perturb} and then we use Proposition~\ref{prop:pert:1}
to conclude that $\chi$ is an almost flow which is in the galaxy of $\phi$.  We
use Corollary~\ref{cor:davie_flow} and Remark~\ref{rem:unique_flow} to conclude
the proof.
 
It is straightforward that $\epsilon_{t,t}=0$.
We start by computing an \textit{a priori} estimate of $(\alpha,a)\mapsto y_{s,t}(\alpha,a)$, for any $(s,t)\in\TT^2$,
$a\in\uV$ and $\alpha\in [0,1]$,
\begin{align}
\nonumber
\abs{y_{s,t}(\alpha,a)-a}&\leq \alpha\normsup{\hf_i\id}\normp{\bx^i}\omega_{s,t}^{1/p}
+\frac{\alpha}{2}\normsup{[\hf_i,\hf_j]\id}\normpp{\bx^{i,j}}\omega_{s,t}^{2/p}\\
&\leq \left(\normsup{\hf_i\id}+\normsup{[\hf_i,\hf_j]\id}
\omega_{0,T}^{1/p}\right)\normp{\bx}\omega_{s,t}^{1/p}\\
&\leq C_\infty\normp{\bx}\omega_{s,t}^{1/p},
\label{eq:apriori_bailleul}
\end{align}
where $C_\infty\eqdef \normsup{\hf_i\id}+\normsup{[\hf_i,\hf_j]\id}
\omega_{0,T}^{1/p}$.
With \eqref{eq:apriori_bailleul}, we control the remainder $\epsilon_{t,s}$,
\begin{align*}
\normsup{\epsilon_{t,s}}&\leq \left[ \normp{\bx^{i}}\normp{\bx^j}\normlip{\hf_i\hf_j\id}
+\normf{2p}{\bx^{i,j}}\normlip{[\hf_i,\hf_j]\id}\right]
C_\infty
\normp{\bx}\omega_{s,t}^{3/p}\\
&+\normsup{\hf_i[\hf_j,\hf_k]\id}\normp{\bx^{i}}\normf{2p}{\bx^{j,k}}\omega_{s,t}^{3/p},
\end{align*}
which proves \eqref{eq:epsilon:2}.

To show the last estimate~\eqref{eq:epsilon:3}, we compute for any $(s,t)\in\rTT^2$ and any $a,b\in\uV$,
\begin{align*}
\epsilon_{t,s}(b)-\epsilon_{t,s}(a)&=\underbrace{\int_0^1\int_0^\beta   \hf_i\hf_j(\id(y_{s,t}(\gamma,b))-\id(b)-\id(y_{s,t}(\gamma,a))+\id(a)]\bx^{i}_{s,t}\bx^{j}_{s,t}\vd\gamma\vd\beta}_ {\run}\\
&+\underbrace{\frac{1}{2}\int_0^1\int_0^\beta[\hf_i,\hf_j][\id(y_{s,t}(\gamma,b))-\id(b)-\id(y_{s,t}(\gamma,a))+\id(a)]\bx^{i,j}_{s,t}\vd\gamma\vd\beta}_{\rdeux}\\
&+
\underbrace{
\frac{1}{2}\int_0^1\int_0^\beta \hf_i[\hf_j,\hf_k][\id(y_{t,s}(\gamma,b))-\id (y_{t,s}(\gamma,a))]\bx^{i}_{s,t}\bx^{j,k}_{s,t}\vd\gamma\vd\beta}_{\rtrois}.
\end{align*}
We assume that $\hf\id\in\cC_b^{2+\gamma}$, so
$\hf_i\hf_j\id\in\cC^{1+\gamma}_b$. It follows from Lemma~\ref{lem:4_points_f}
that $\hf_i\hf_j$ satisfies a $4$-points control such that
$\widehat{\hf_i\hf_j}\id(x)=C\abs{x}^\gamma$ where $C$ a positive constant with
depends on the $\gamma$-Hölder norm of the derivative of $\hf_i\hf_j$. It
follows that,
\begin{align*}
\abs{\run}&\leq C\sup_{\gamma\in [0,1],a\in\uV}\abs{y_{s,t}(\gamma,a)-a}^\gamma\left[\sup_{\gamma\in[0,1]}\abs{y_{s,t}(\gamma,b)-y_{s,t}(\gamma,a)}+\abs{b-a}\right]\normp{\bx^{(1)}}^2\omega_{s,t}^{2/p}\\
&+\normlip{\hf_i\hf_j}\sup_{\gamma\in [0,1]}\abs{y_{s,t}(\gamma,b)-b-y_{s,t}(\gamma,a)+a}\normp{\bx^{(1)}}^2\omega_{s,t}^{2/p},
\end{align*}
which yields combining with \eqref{eq:ode:5}, \eqref{eq:ode:6} and
\eqref{eq:apriori_bailleul} to
\begin{align*}
\abs{\run}\leq CC_{\infty,T}^\gamma\normp{\bx}^\gamma\omega_{s,t}^{\gamma/p}
(1+C_{T})\abs{b-a}\normp{\bx^{(1)}}^2\omega_{s,t}^{2/p}
+\normlip{\hf_i\hf_j}C_{T}\abs{b-a}\normp{\bx^{(1)}}^2\omega_{s,t}^{2/p},
\end{align*}
where $C_T$ is a constant which is computed in \eqref{eq:ode:5}.
Finally, $\abs{\run}\leq \delta_T\abs{b-a}$ where $\delta_T$ is a constant depending on
the norms of $\hf$, $\bx$ which decreases to $0$ when $T\rightarrow 0$. 
Similarly, we obtain the same estimate for $\rdeux$.
To estimate $\rtrois$, we note that $\hf_i[\hf_j,\hf_k]\id\in\cC^{\gamma}_b$.
Then with \eqref{eq:ode:5} it follows that $\abs{\rtrois}\leq
C''_T\omega_{s,t}^{2/p}\abs{b-a}^\gamma$, where $C''_T$ is another constant
which has the same dependencies as $C'_T$.  Thus
$\abs{\epsilon(b)-\epsilon(a)}\leq
\delta\abs{b-a}+C''_T\omega_{s,t}^{2/p}\abs{b-a}^\gamma$. This concludes the
proof.
\end{proof}

\begin{remark}
This result can be extended to the case $\uU$ is a Banach case.
It is an advantage compared to the Friz-Victoir's approach of Subsection~\ref{subsec:friz_victoir}.
\end{remark}

\bigskip
\noindent
\textbf{Acknowledgement.} The authors wish to thank Laure Coutin for her careful
reading and interesting discussions regarding the content of this article.
We also thank the referees for their suggestions and comments which improved the manuscript.
The first author thanks the Center for Mathematical Modeling, Conicyt fund AFB 170001.



\printbibliography

\end{document}